\newtheorem{theorem}{Theorem}
\newtheorem{lemma}{Lemma}[section]
\newtheorem{corollary}{Corollary}
\newtheorem{definition}{Definition}
\newtheorem{proposition}{Proposition}[section]
\newtheorem{remark}{Remark}[section]
\newtheorem{numerical-lemma}{Numerical Lemma}[section]
\numberwithin{equation}{section}
\newcommand{\sech}{\ensuremath{\text{sech}}}
\newcommand{\SP}{\bf SP}
\newcommand{\qmax}{q_{\textrm{max}}}
\newcommand{\bif}{\textrm{bif}}
{\begin{trivlist} \item[]{\textbf{Proof} }}%
{\hspace*{\fill}$\rule{.3\baselineskip}{.35\baselineskip}$\end{trivlist}}
\begin{document}

\title{\bf Standing waves on a flower graph}

\author[A. Kairzhan]{Adilbek Kairzhan}
\address[A. Kairzhan]{Department of Mathematics, McMaster University, Hamilton ON, Canada, L8S 4K1}
\email{kairzhaa@math.mcmaster.ca}

\author[R. Marangell]{Robert Marangell}
\address[R. Marangell]{School of Mathematics and Statistics F07, University of Sydney, NSW 2006, Australia}
\email{robert.marangell@sydney.edu.au}

\author[D.E. Pelinovsky]{Dmitry E. Pelinovsky}
\address[D.E. Pelinovsky]{Department of Mathematics, McMaster University, Hamilton ON, Canada, L8S 4K1}
\email{dmpeli@math.mcmaster.ca}

\author[K.L. Xiao]{Ke Liang Xiao}
\address[K.L. Xiao]{Department of Mathematics, McMaster University, Hamilton ON, Canada, L8S 4K1}
\email{xiaok1@mcmaster.ca}

\maketitle

\begin{abstract}
A flower graph consists of a half line and $N$ symmetric loops connected at a single vertex with $N \geq 2$
(it is called the tadpole graph if $N = 1$). We consider positive
single-lobe states on the flower graph in the framework of the cubic nonlinear Schr\"{o}dinger
equation. The main novelty of our paper is a rigorous application of the period function for
second-order differential equations towards understanding the symmetries and bifurcations
of standing waves on metric graphs. We show that the positive single-lobe symmetric state (which is the ground state
of energy for small fixed mass) undergoes exactly one bifurcation for larger mass, at which point $(N-1)$
branches of other positive single-lobe states appear: each branch has $K$ larger components
and $(N-K)$ smaller components, where $1 \leq K \leq N-1$. We show that
only the branch with $K = 1$ represents a local minimizer of energy for large fixed mass, however, the
ground state of energy is not attained for large fixed mass if $N \geq 2$.
Analytical results obtained from the period function are illustrated numerically.
\end{abstract}

\section{Introduction}

A {\bf \em flower graph} is a metric graph which consists of a half-line and $N$ symmetric loops connected at a single common vertex.
We denote such a graph by $\Gamma_N$. Without loss of generality, we normalize the length of symmetric loops
to $2 \pi$ and parameterize the loops by $[-\pi,\pi]$. The half-line coincides with $[0,\infty)$.
We count $N+1$ edges and $2$ vertices (one at infinity), so that the Betti number of $\Gamma_N$ is equal to $N$.
Figure \ref{fig-0} gives schematic examples of the flower graph for two and three loops.

\begin{figure}[htbp] 
   \centering
      \includegraphics[width=3in, height = 2in]{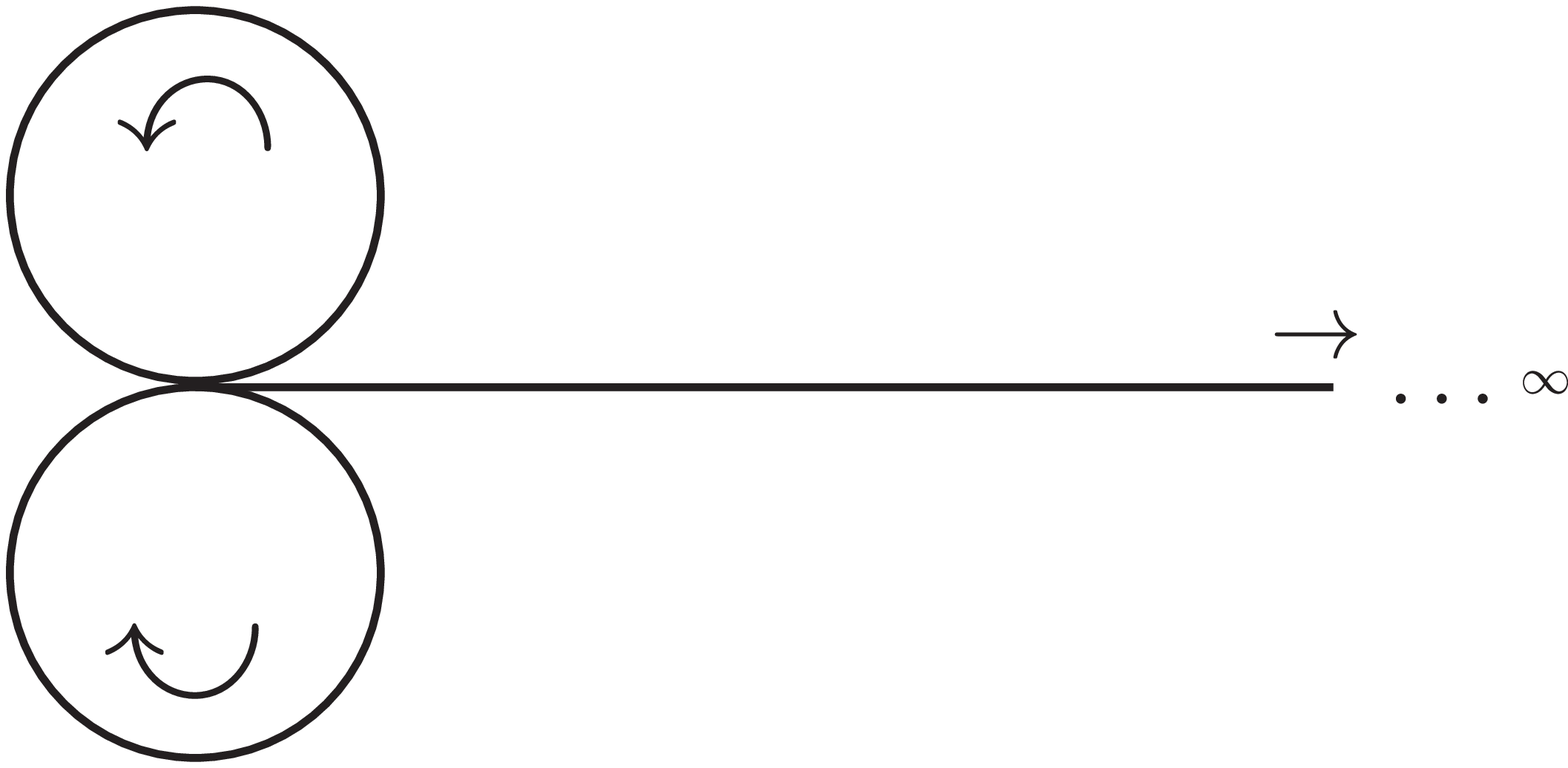}
   \includegraphics[width=3in, height = 2in]{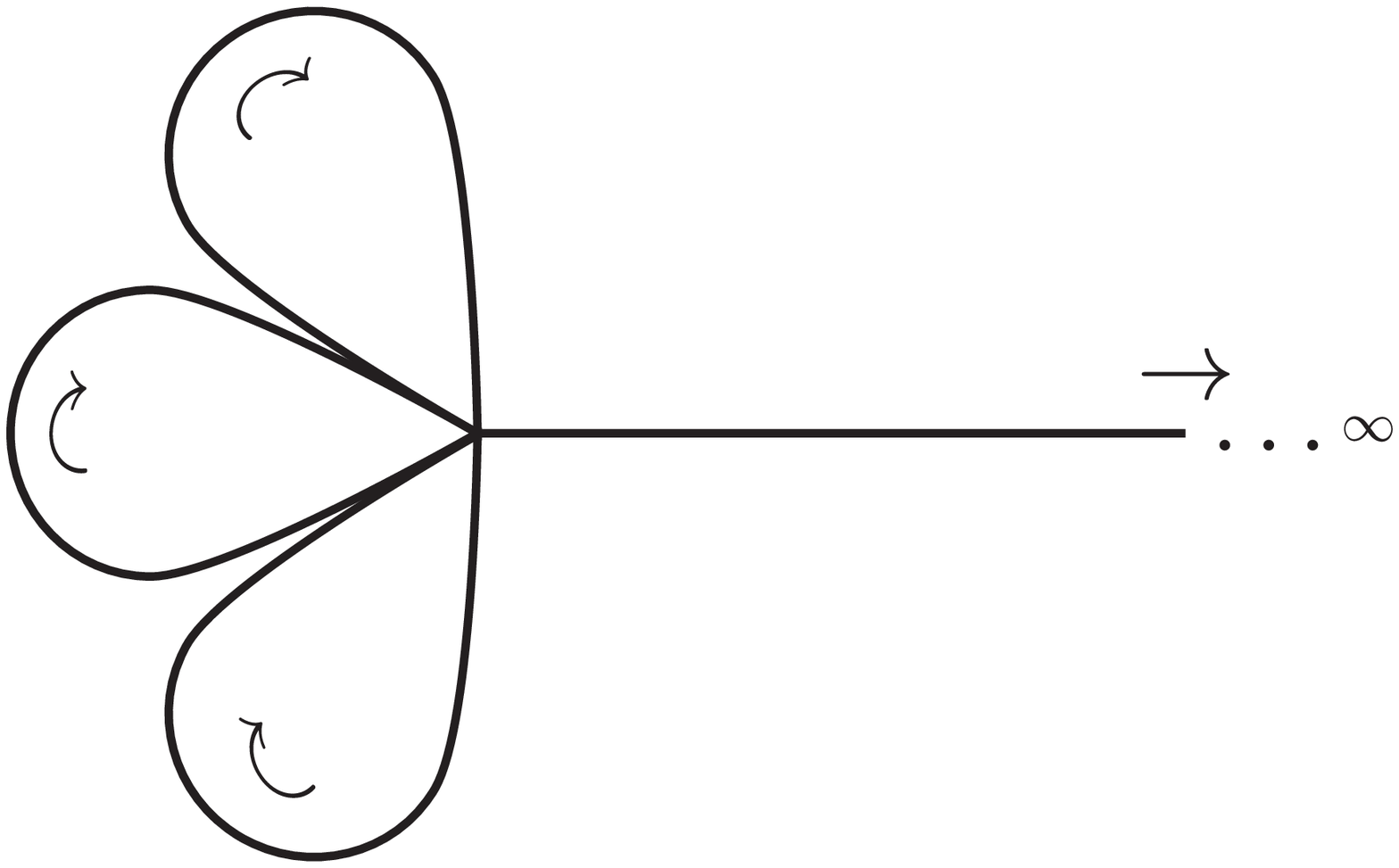}
   \caption{A schematic example of the flower graph $\Gamma_N$ with $N = 2$ (left) and $N = 3$ (right).}
   \label{fig-0}
\end{figure}

Standing waves in the nonlinear Schr\"{o}dinger (NLS) equation on metric graphs have
attracted much attention in recent years \cite{Noja}. The NLS equation with a
power nonlinearity is usually posed in the normalized form
\begin{equation}
\label{nls-time}
i \Psi_t + \Delta \Psi + (p+1) |\Psi|^{2p} \Psi = 0,
\end{equation}
where the Laplacian $\Delta$ is defined componentwise on the metric graph subject to
proper boundary conditions (see, e.g., monographs \cite{BK,Exner}).

Let the wave function $\Psi = (\psi_1,\psi_2,\dots,\psi_N, \psi_0)$ on the flower graph $\Gamma_N$
be represented by the functions $\{ \psi_j \}_{j=1}^N : [-\pi,\pi] \mapsto \mathbb{C}$ on the $N$ symmetric loops
and by $\psi_0 : [0,\infty) \mapsto \mathbb{C}$ on the half-line.
We define the space of square-integrable functions $L^2(\Gamma_N)$ componentwise as
$$
L^2(\Gamma_N) = \underbrace{L^2(-\pi,\pi) \times \dots  \times L^2(-\pi,\pi)}_\text{\rm N times} \times L^2(0,\infty)
$$
The NLS equation is locally well-posed in the energy space $H^1_C(\Gamma_N) := H^1(\Gamma_N) \cap C^0(\Gamma_N)$,
where the Sobolev space $H^1(\Gamma_N)$ is also defined componentwise as
$$
H^1(\Gamma_N) =  \underbrace{H^1(-\pi,\pi) \times \dots  \times H^1(-\pi,\pi)}_\text{\rm N times} \times H^1(0,\infty),
$$
and $C^0(\Gamma_N)$ denotes the space of continuous functions on edges of $\Gamma_N$ and across
the vertex point in $\Gamma_N$. The local solution to the NLS equation (\ref{nls-time}) conserves
the energy
\begin{equation}
\label{energy}
E(\Psi) = \| \nabla \Psi \|^2_{L^2(\Gamma_N)} - \| \Psi \|^{2p+2}_{L^{2p+2}(\Gamma_N)}
\end{equation}
and the mass
\begin{equation}
\label{mass}
Q(\Psi) = \| \Psi \|^2_{L^2(\Gamma_N)}.
\end{equation}

A {\bf \em standing wave} of the NLS equation (\ref{nls-time})
is given by the solution of the form $\Psi(t,x) = \Phi(x) e^{-i \omega t}$,
where $\Phi \in H^1_C(\Gamma_N)$ is a real-valued solution of the stationary NLS equation
\begin{equation}
\label{nls-stat-intro}
\omega \Phi =  - \Delta \Phi - (p+1) |\Phi|^{2p} \Phi,
\end{equation}
and $\omega < 0$ is a frequency parameter. Among all standing wave solutions,
we are particularly interested in the {\bf \em positive single-lobe states}, examples of which are shown on Figure \ref{fig-single-lobe}.

\begin{definition}
\label{def-single-lobe}
The standing wave $\Phi \in H^1_C(\Gamma_N)$ is said to be a positive single-lobe state
if $\Phi(x) > 0$ for every $x \in \Gamma_N$ and on each bounded edge of $\Gamma_N$,
either the maximum of $\Phi$ is achieved at a single internal point
and the minima of $\Phi$ occur at the vertices or the minimum of $\Phi$
is achieved at a single internal point and the maxima of $\Phi$ occur at the vertices.
\end{definition}

\begin{figure}[htbp] 
   \centering
   \includegraphics[width=2.8in, height = 2in]{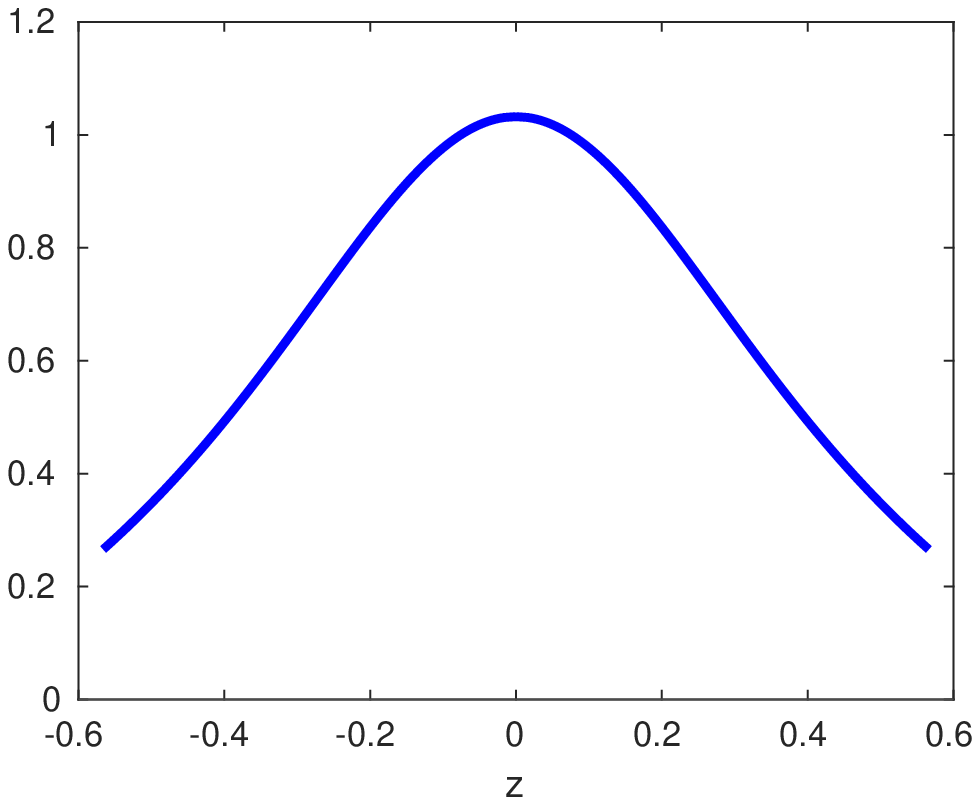}
 \includegraphics[width=2.8in, height = 2in]{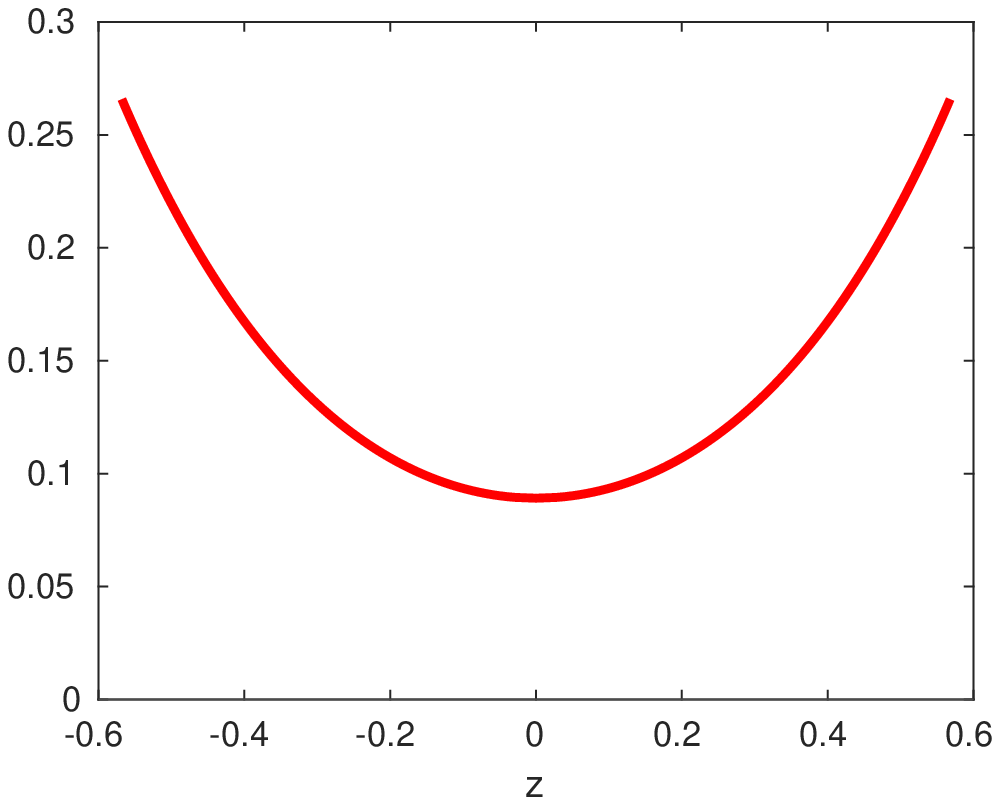}
   \caption{Examples of a positive single-lobe state on a bounded edge.
   Left: the maximum is achieved at the internal point, and the minima is achieved at the vertices.
   Right: the minimum is achieved at the internal point, and the maxima is achieved at the vertices.}
\label{fig-single-lobe}
\end{figure}

If $N = 1$, the graph $\Gamma_1$ is usually called the tadpole graph. Construction of standing waves
of the cubic NLS equation $(p = 1)$ on the tadpole graph $\Gamma_1$
was obtained with the use of elliptic functions in \cite{CFN}. Bifurcations and stability of standing waves
for small negative $\omega$ were analyzed for any $p > 0$ in \cite{NPS} by using Sturm's theory and asymptotic methods.

For the subcritical powers with $p \in (0,2)$ and for the tadpole graph $N = 1$,
it was shown in \cite{AdamiJFA} based on the variational method and symmetric energy-decreasing
rearrangements that the ground state of energy $E(\Psi)$ subject to the fixed mass $\mu := Q(\Psi)$
is attained for every $\mu > 0$ at the positive single-lobe state $\Phi$, which is
symmetric on the loop $[-\pi,\pi]$ and monotonically decreasing on $[0,\pi]$ and $[0,\infty)$. The ground state
$\Phi \in H^1_C(\Gamma_N)$ is the global minimizer of the variational problem
\begin{equation}
\label{min}
\mathcal{E}_{\mu} = \inf_{\Psi \in H^1_C(\Gamma_N)} \left\{ E(\Psi) : \quad Q(\Psi) = \mu \right\}.
\end{equation}
In the case $N = 1$, $\mathcal{E}_{\mu} = E(\Phi)$ is attained on the ground state $\Phi \in H^1_C(\Gamma_N)$ for $p \in (0,2)$.
Generally, $\mathcal{E}_{\mu}$ may not be attained on unbounded metric graphs \cite{AdamiCV}. For instance,
a sufficient condition on $\mu$ was found in Theorem 5.1 of \cite{AdamiJFA}
which ensures that $\mathcal{E}_{\mu}$ is not attained on a graph with a compact core and exactly one half-line
for $p \in (0,2)$. This result is applicable to the flower graph $\Gamma_N$ in the limit of large $N$.

For the critical power $p = 2$, it was shown in Theorem 3.3 in \cite{AST17} that the ground state
on the metric graph with exactly one half-line is attained if and only if $\mu \in (\mu_{\mathbb{R}^+},\mu_{\mathbb{R}}]$, where
$\mu_{\mathbb{R}^+}$ is the mass of the half-soliton on the half-line $\mathbb{R}^+$
and $\mu_{\mathbb{R}}$ is the mass of the full-soliton on the full line $\mathbb{R}$,
both values are independent of $\omega$ for $p = 2$. It is shown in the recent work \cite{MNP}
for the tadpole graph $\Gamma_1$ that the ground state is again given by the positive single-lobe state $\Phi$,
which is symmetric on the loop $[-\pi,\pi]$ and monotonically decreasing on $[0,\pi]$ and $[0,\infty)$.

Another relevant result is Theorem 3.3 in \cite{AST}, where the existence of local energy minimizers
was proven in the limit of large mass $\mu$ for $p \in (0,2)$ under the additional condition that the energy minimizer is localized
on one bounded edge of an unbounded graph and attains a maximum on this edge. This result applies to $\Gamma_N$ for every $N \geq 1$.
Alternative characterization of the standing waves in the limit of large mass $\mu$
was obtained in the cubic case $(p = 1)$ by using the elliptic functions \cite{BMP} where
the state of minimal energy at a fixed large mass $\mu$ was identified among the local minimizers.

The purpose of this work is to study the interplay between the existence of standing waves
of the NLS equation (\ref{nls-time}) and the symmetry of the metric graph in the particular case
of the flower graph $\Gamma_N$. We develop a novel analytical method to treat the existence of positive single-lobe
states from properties of the {\bf \em period function} for second-order differential equations.
Such properties are typically used for analysis of existence of periodic solutions to nonlinear evolution equations
\cite{Vill1,Vill2} as well as their spectral stability \cite{GP17}. The main novelty of our paper
is to show how applications of this method allow us to obtain precise analytical results on
the existence of positive single-lobe states. For clarity, we consider
the cubic case $(p=1)$ only. However, since we are not using elliptic functions,
the results here can be applied for any subcritical power with $p \in (0,2)$.

Let us now present the main results and the organization of this paper. Since we work with $p = 1$ and with real-valued $\Phi \in H^1_C(\Gamma_N)$, we rewrite the stationary NLS equation (\ref{nls-stat-intro}) in the explicit form:
\begin{equation}
\label{nls-stat}
\omega \Phi =  - \Delta \Phi - 2 \Phi^3.
\end{equation}
The standing wave $\Phi = (\phi_1,\phi_2,\dots,\phi_N, \phi_0)$
is a strong solution to the stationary NLS equation (\ref{nls-stat})
subject to the natural Neumann--Kirchhoff boundary conditions given by
\begin{equation}
\label{KBC}
\left\{ \begin{array}{l}
\phi_1(\pm \pi) = \phi_2(\pm \pi) = \dots = \phi_N(\pm \pi) = \phi_0(0), \\
\sum_{j=1}^N \left[ \phi_j'(\pi) - \phi_j'(-\pi) \right] = \phi_0'(0),
\end{array} \right.
\end{equation}
where the derivatives are defined as the one-sided limits of quotients.
We say that $\Phi \in H^2_{\rm NK}(\Gamma_N)$ if $\Phi \in H^2(\Gamma_N)$
satisfies the Neumann--Kirchhoff boundary conditions (\ref{KBC}), where
the Sobolev space $H^2(\Gamma_N)$ is also defined componentwise as
$$
H^2(\Gamma_N) = \underbrace{H^2(-\pi,\pi) \times \dots  \times H^2(-\pi,\pi)}_\text{\rm N times} \times H^2(0,\infty).
$$
$H^2_{\rm NK}(\Gamma_N)$ is the domain of the Laplacian operator
$\Delta : H^2_{\rm NK}(\Gamma_N) \subset L^2(\Gamma_N) \to L^2(\Gamma_N)$, where $\Delta$
is defined componentwise in $L^2(\Gamma_N)$.  By Theorem 1.4.4 in \cite{BK},
the Laplacian operator is self-adjoint in $L^2(\Gamma_N)$.
One can verify via integration by parts that for every $\Phi \in H^2_{\rm NK}(\Gamma_N)$ we have
$$
\langle (-\Delta) \Phi, \Phi \rangle_{L^2(\Gamma_N)} = \| \nabla \Phi \|_{L^2(\Gamma_N)}^2 \geq 0.
$$
Hence $\sigma(-\Delta) \subseteq [0,\infty)$ and $\omega$ in the stationary NLS equation (\ref{nls-stat})
is restricted to be negative. It is shown in Appendix \ref{appendix-spectrum} that $\sigma(-\Delta) = [0,\infty)$ includes
the continuous spectrum and a set of positive embedded eigenvalues.

Thanks to the symmetry of the flower graph $\Gamma_N$, we are first interested in
the existence of symmetric state, according to the following definition.

\begin{definition}
\label{def-symmetric}
We say that the standing wave is symmetric if $\Phi \in H^2_{\rm NK}(\Gamma_N)$
satisfies the symmetry condition
\begin{equation}
\label{sym-state}
\phi_1(x) = \phi_2(x) = \dots = \phi_N(x) \textrm{ for } x \in [-\pi, \pi].
\end{equation}
\end{definition}

The first main result states that there exists the unique positive single-lobe symmetric state
with the monotonically decreasing tail in the stationary NLS equation (\ref{nls-stat}) for every $\omega < 0$.
The proof of this result is given in Section \ref{sec-global-existence}.

\begin{theorem}
\label{global-existence}
For every $\omega<0$, there exists only one positive single-lobe symmetric state $\Phi \in H^2_{\rm NK}(\Gamma_N)$
which satisfies the stationary NLS equation (\ref{nls-stat}), is symmetric on each loop parameterized
by $[-\pi,\pi]$, and is monotonically decreasing on $[0,\pi]$ and $[0,\infty)$
The map $(-\infty,0) \ni \omega \mapsto \Phi(\cdot,\omega) \in H^2_{\rm NK}(\Gamma_N)$ is $C^1$
and the mass $\mu(\omega) := Q(\Phi(\cdot,\omega))$ is a $C^1$ monotonically decreasing function
satisfying the limits
$\mu(\omega) \to 0$ as $\omega \to 0$ and $\mu(\omega) \to \infty$
as $\omega \to -\infty$.
\end{theorem}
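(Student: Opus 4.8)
The plan is to reduce the coupled boundary--value problem on $\Gamma_N$ to a single scalar equation built from a period--type integral, and then to read off existence, uniqueness, regularity and the mass asymptotics from monotonicity of that integral. Write $\omega = -c^2$ with $c>0$, so that on each edge the stationary equation (\ref{nls-stat}) reads $\phi'' = c^2\phi - 2\phi^3 = -V'(\phi)$ with the quartic potential $V(\phi) = -\tfrac{c^2}{2}\phi^2 + \tfrac12\phi^4$, whose phase portrait has a saddle at $\phi=0$ and centers at $\phi=\pm c/\sqrt2$. First I would treat the edges separately. On the half-line the only solution lying in $H^2(0,\infty)$ is the soliton tail $\phi_0(x)=c\,\sech(c(x-a))$, and monotone decrease on $[0,\infty)$ forces the peak position $a\le0$; equivalently, the vertex value $u:=\phi_0(0)\in(0,c)$ and $\phi_0'(0) = -u\sqrt{c^2-u^2}$, which is precisely the statement that the half-line orbit carries zero energy. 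By Definition \ref{def-symmetric} together with the single-lobe and decreasing hypotheses, the common loop profile $\phi$ is even with its maximum $b\in(c/\sqrt2,c)$ at the center, hence a positive periodic orbit around the center $c/\sqrt2$ that reaches the vertex value $u$ at $x=\pi$.

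Next I would impose the Neumann--Kirchhoff conditions (\ref{KBC}). Continuity gives $\phi(\pi)=\phi_0(0)=u$, while evenness of $\phi$ turns the flux condition into $2N\phi'(\pi)=\phi_0'(0)$. Combining this flux identity with energy conservation on both the loop and the half-line (whose energy is $0$) yields the clean algebraic relation $V(b)=\tfrac{4N^2-1}{4N^2}V(u)$. After the rescaling $\phi(x)=c\,\psi(cx)$, $\xi=cx$, which normalizes the equation to $\psi''=\psi-2\psi^3$ and turns the loop into $[-L,L]$ with $L=c\pi$, this relation becomes $c$-independent: with $\upsilon=u/c\in(0,1)$, $\beta=b/c$ and $V_1(\psi)=-\tfrac12\psi^2+\tfrac12\psi^4$ one has $V_1(\beta)=\tfrac{4N^2-1}{4N^2}V_1(\upsilon)$. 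Thus, given $\upsilon\in(0,1)$, the scaled energy $h(\upsilon)=\tfrac{4N^2-1}{4N^2}V_1(\upsilon)\in(-\tfrac18,0)$, the peak $\beta(\upsilon)\in(1/\sqrt2,1)$ and the whole orbit are uniquely determined, and one checks that $\upsilon$ lies strictly between the trough and the peak. The single remaining condition is that the time to descend from the peak to the vertex equals the half-length $L$, i.e.
\[
\mathcal{T}(\upsilon) := \int_{\upsilon}^{\beta(\upsilon)} \frac{d\psi}{\sqrt{2\,(h(\upsilon) - V_1(\psi))}} \;=\; L \;=\; \sqrt{-\omega}\,\pi .
\]

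The crux is then to establish that $\mathcal{T}:(0,1)\to(0,\infty)$ is a strictly decreasing $C^1$ bijection. The boundary behaviour is accessible by direct estimate: as $\upsilon\to1^-$ the peak $\beta\to1$ with $\upsilon\to\beta$, so $\mathcal{T}\to0$, whereas as $\upsilon\to0^+$ the orbit tends to the homoclinic loop of the saddle at $\psi=0$ and the descent grazes $\psi=0$, forcing $\mathcal{T}\to+\infty$ (logarithmically). Strict monotonicity is where the period-function machinery enters and is the main technical obstacle: since both the lower limit $\upsilon$ and the energy $h(\upsilon)$, hence $\beta(\upsilon)$, move with $\upsilon$, I would fix the endpoints by an affine substitution and differentiate under the integral sign, reducing the sign of $\mathcal{T}'(\upsilon)$ to a monotonicity property of the period (and the associated ``partial-period'') function of the quartic oscillator $V_1$. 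Granting this, $\mathcal{T}^{-1}$ is well defined and $C^1$, so for every $\omega<0$ there is exactly one admissible $\upsilon(L)$, hence exactly one positive single-lobe symmetric state; its monotone decrease on $[0,\pi]$ and on $[0,\infty)$ is automatic from the construction, the state lies in $H^2_{\rm NK}(\Gamma_N)$ by construction, and $\omega\mapsto\Phi(\cdot,\omega)$ is $C^1$ because $L=\sqrt{-\omega}\,\pi$ is smooth and $\mathcal{T}'\neq0$ (implicit function theorem).

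Finally, for the mass I would use the rescaling to write $\mu(\omega)=\sqrt{-\omega}\,\big[\,N\!\int_{-L}^{L}\psi^2\,d\xi + \big(1-\sqrt{1-\upsilon^2}\,\big)\big]$, where the half-line integral $\int_0^\infty \sech^2(\eta+\alpha)\,d\eta = 1-\sqrt{1-\upsilon^2}$ with $\upsilon=\sech\alpha$ is explicit. The two limits then follow from the previous step: as $\omega\to0^-$ one has $c\to0$, $L\to0$ and $\upsilon\to1$, so the bracket stays bounded and $\mu\to0$; as $\omega\to-\infty$ one has $c\to\infty$, $L\to\infty$ and $\upsilon\to0$, the loop integral tends to the homoclinic value while the half-line term vanishes, so $\mu\to\infty$. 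Strict monotonicity and the $C^1$ property of $\mu$ I would obtain by differentiating this expression in $c$ and inserting the established sign of $\upsilon'(L)=1/\mathcal{T}'(\upsilon)$; I expect this to be a secondary computation once the monotonicity of $\mathcal{T}$ in the previous step is in hand.
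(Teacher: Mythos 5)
Your reduction is exactly the paper's: the rescaling $\omega=-c^2$, the soliton tail on the half-line with vertex value $p_0=\phi_0(0)/c\in(0,1)$, the flux condition $2N\phi'(\pi)=\phi_0'(0)$ translated through the first-order invariant into the energy level $E_0(p_0)=-\bigl(1-\tfrac{1}{4N^2}\bigr)A(p_0)$, and the single scalar equation $\mathcal{T}(p_0)=\pi\epsilon$ for the descent time from the peak to the vertex. The boundary limits of $\mathcal{T}$, the mass formula, and the two mass limits are also handled the same way as in Section \ref{sec-global-existence}. However, there are two genuine gaps, and they are precisely the two lemmas that carry the weight of the paper's proof.

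First, you explicitly \emph{grant} the strict monotonicity of $\mathcal{T}$ on $(0,1)$ after describing it as ``the main technical obstacle.'' This is Lemma \ref{global-decreasing}, and it is the core of the theorem: the paper establishes it by rewriting $[E_0(p_0)+A(p_*)]\,\mathcal{T}(p_0)$ via the exact differential identity (\ref{integration}) so that the integrand is nonsingular, differentiating, and then running two separate sign arguments for $p_0\in(p_*,1)$ and $p_0\in(0,p_*)$ (the latter requiring the integration by parts (\ref{good-int-by-parts}) and the identity $4N^2q_0^2=p_0^2-p_0^4$ to show the boundary contribution is negative). Naming the standard trick (``fix the endpoints and differentiate under the integral sign'') is not the same as carrying it out; without it, existence and uniqueness for \emph{every} $\omega<0$ is not established.

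Second, and more seriously as a matter of logic, the monotonicity of $\mu$ in $\omega$ does \emph{not} follow from the monotonicity of $\mathcal{T}$, contrary to your expectation that it is ``a secondary computation once the monotonicity of $\mathcal{T}$ is in hand.'' Writing $\pi\mu=\mathcal{M}(p_0)=\mathcal{T}(p_0)\bigl[2N\mathcal{B}(p_0)+1-\sqrt{1-p_0^2}\bigr]$ with $\mathcal{B}(p_0)=\int_{p_0}^{p_+}u^2\,du/v$, the product rule gives $\mathcal{M}'=\mathcal{T}'\cdot[\cdot]+\mathcal{T}\cdot\bigl(2N\mathcal{B}'+p_0/\sqrt{1-p_0^2}\bigr)$; the first term is negative by Lemma \ref{global-decreasing}, but the third term is strictly positive and $\mathcal{B}'$ is sign-indefinite, so the sign of $\mathcal{M}'$ is not determined by the sign of $\mathcal{T}'$. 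The paper devotes Lemma \ref{lemma-mass-monotonicity} to this, balancing the positive terms against the negative ones separately for $p_0\in[p_*,1)$ (via an integration by parts) and for $p_0\in(0,p_*)$ (via a Cauchy--Schwarz inequality applied to $\int du/v$ against $\int u^2\,du/v$ and $\int du/(u^2v)$). Without an argument of this kind your proof establishes that $\mu$ is $C^1$ with the stated limits, but not that it is monotonically decreasing, which is part of the statement.
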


\begin{remark}
There exist other positive symmetric states satisfying the stationary NLS equation (\ref{nls-stat})
with more than one maximum on the $N$ loops or with a non-monotonically decreasing tail on $[0,\infty)$.
However, these other positive symmetric states are not local energy minimizers, and do not exist
for small negative $\omega$, hence we ignore them here.
\end{remark}

In what follows, we will often omit the dependence of $\Phi(\cdot,\omega)$ on $\omega$ obtained
in Theorem \ref{global-existence}. Given the positive single-lobe symmetric state $\Phi \in H^2_{\rm NK}(\Gamma_N)$
to the stationary NLS equation (\ref{nls-stat}), we can define the self-adjoint linear operator
$\mathcal{L} :  H^2_{\rm NK}(\Gamma_N) \subset L^2(\Gamma_N) \to L^2(\Gamma_N)$ given by
\begin{equation}
\label{Jacobian}
\mathcal{L} = -\Delta - \omega - 6 \Phi^2.
\end{equation}
Since $\phi_0(x) \to 0$ as $x \to \infty$ on the half-line, an application of Weyl's Theorem yields that
the continuous spectrum of $\mathcal{L}$ is given by
\begin{equation}
\label{abs-cont-part}
\sigma_{\rm a.c.}(\mathcal{L}) = \sigma(-\Delta - \omega) = [|\omega|,\infty).
\end{equation}
This implies that there are only finitely many eigenvalues of $\mathcal{L}$ of finite multiplicities located below $|\omega|$.
Let $n(\mathcal{L})$ be the {\bf \em Morse index} (the number of negative eigenvalues of $\mathcal{L}$ counted with their multiplicities)
and $z(\mathcal{L})$ be the {\bf \em nullity index} of the kernel of $\mathcal{L}$
(the multiplicity of the zero eigenvalue of $\mathcal{L}$). Since
\begin{equation}
\label{quad-form-phi}
\langle \mathcal{L} \Phi, \Phi \rangle_{L^2(\Gamma_N)} = - 4 \| \Phi \|_{L^4(\Gamma_N)}^{4} < 0,
\end{equation}
there is always a negative eigenvalue of $\mathcal{L}$ so that $n(\mathcal{L}) \geq 1$.
When the nullity index is nonzero, we define bifurcations of the symmetric state,
according to the following definition.

\begin{definition}
\label{def-bifurcation}
We say that the positive single-lobe symmetric state $\Phi$ at the given $\omega < 0$ undergoes a bifurcation if $z(\mathcal{L}) \geq 1$.
\end{definition}

The second main result states that the positive single-lobe symmetric state of Theorem \ref{global-existence}
undergoes exactly one bifurcation in the parameter continuation in $\omega$.
The proof of this result is given in Section \ref{sec-global-stability}.

\begin{theorem}
\label{global-stability}
Assume $N \geq 2$, and consider the positive single-lobe
symmetric state $\Phi \in H^2_{\rm NK}(\Gamma_N)$ of Theorem \ref{global-existence}.
There exists $\omega_* \in (-\infty,0)$ such that $z(\mathcal{L}) = N-1$ for $\omega = \omega_*$ and
$z(\mathcal{L}) = 0$ for $\omega \neq \omega_*$. Moreover, $n(\mathcal{L}) = N$ for $\omega \in (-\infty,\omega_*)$
and $n(\mathcal{L}) = 1$ for $\omega \in [\omega_*,0)$.
\end{theorem}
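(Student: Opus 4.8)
The plan is to exploit the $S_N$ permutation symmetry of the loops. Since $\Phi$ is symmetric (Definition \ref{def-symmetric}), the potential $6\Phi^2$ is identical on every loop, so $\mathcal{L}$ commutes with the loop permutations and $L^2(\Gamma_N)$ splits $\mathcal{L}$-invariantly into the symmetric subspace $V_{\mathrm{sym}}=\{u:\ u_1=\dots=u_N\}$ and the ``standard'' subspace $V_{\mathrm{anti}}=\{u\in H^2_{\mathrm{NK}}(\Gamma_N):\ \sum_{j=1}^N u_j\equiv 0\}$. For $u\in V_{\mathrm{anti}}$ the continuity condition in (\ref{KBC}) gives $u_1(\pm\pi)=\dots=u_N(\pm\pi)=u_0(0)$, and summing yields $N\,u_0(0)=0$, so $u_0(0)=0$ and each $u_j(\pm\pi)=0$; the Kirchhoff condition then reads $0=(\sum_j u_j)'(\pi)-(\sum_j u_j)'(-\pi)=u_0'(0)$, whence $u_0\equiv 0$. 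Thus on $V_{\mathrm{anti}}$ the operator $\mathcal{L}$ acts componentwise as the scalar Dirichlet operator
\begin{equation}
\label{plan-dir}
L_D w := -w'' - \omega w - 6\phi^2 w = \lambda w, \qquad w(\pm\pi)=0,
\end{equation}
and each eigenvalue of $L_D$ appears with its multiplicity times $(N-1)$. The complement $V_{\mathrm{sym}}$ is the scalar problem on one loop plus the half-line with Kirchhoff weight $N$, i.e. exactly the reduced problem that produced $\Phi$.

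I would first dispose of the symmetric sector, showing $n_{\mathrm{sym}}=1$ and $z_{\mathrm{sym}}=0$ for every $\omega<0$. The single negative direction is $\Phi$ itself by (\ref{quad-form-phi}); the monotone single-lobe profile and the decaying tail from Theorem \ref{global-existence} force the ground state of this scalar problem to be simple with all higher symmetric eigenvalues positive, and they rule out a symmetric zero mode (no nontrivial $L^2$ solution on the half-line can match the boundary conditions). Hence all $\omega$-dependence of $n(\mathcal{L})$ and $z(\mathcal{L})$ is carried by (\ref{plan-dir}).

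For (\ref{plan-dir}) I would use that $\phi$ is even, so $L_D$ commutes with reflection and splits into odd and even sectors. Differentiating the profile equation gives $L_D\phi'=0$ with $\phi'$ odd; since $\phi$ is single-lobe, $\phi'$ has exactly one interior zero (at $x=0$) and $\phi'(\pm\pi)\neq 0$. Restricting to $[0,\pi]$, the solution $\phi'$ has $\phi'(0)=0$ and no zero in $(0,\pi)$, so Sturm's theorem shows every odd Dirichlet eigenvalue is strictly positive and $\phi'$ is never an admissible zero mode. The even sector is governed by $\psi:=\partial_a\phi_a$, the derivative of the profile with respect to its amplitude $a=\phi(0)$, which solves $L_D\psi=0$ with $\psi(0)=1$, $\psi'(0)=0$. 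By the Sturm count for the Neumann--Dirichlet problem on $[0,\pi]$, the number of negative even eigenvalues equals the number of zeros of $\psi$ in $(0,\pi)$, and $\lambda=0$ is an even eigenvalue exactly when $\psi(\pi)=0$.

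The crux is to show that $\lambda_1(\omega):=\lambda_1^{\mathrm{even}}(\omega)$ crosses zero exactly once as $\omega$ runs over $(-\infty,0)$ while all other eigenvalues of (\ref{plan-dir}) remain positive. I would write $\psi(\pi)$ through the time map $X(a,b)=\int_b^a d\eta/\sqrt{2(W(a)-W(\eta))}$, with $W(\phi)=-\tfrac{1}{2}|\omega|\phi^2+\tfrac{1}{2}\phi^4$; along the curve $\omega\mapsto(a(\omega),b(\omega))$ fixed by the half-line matching one has $X(a,b)=\pi$ and, at fixed $\omega$, the identity $\psi(\pi)=\sqrt{2(W(a)-W(b))}\,\partial_a X$. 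This converts the bifurcation condition $\psi(\pi)=0$ into the vanishing of $\partial_a X$---a critical point of the period function---and converts the zero count of $\psi$ on $(0,\pi)$ into the oscillation count controlled by that same function. The monotonicity of the period function for the cubic potential then forces $\partial_a X$ to change sign exactly once and $\psi$ to have at most one interior zero, giving a single crossing $\omega_*$. I would complement this with the endpoint asymptotics: as $\omega\to 0^-$ the potential term is negligible and $\lambda_1(\omega)\to \tfrac{1}{4}>0$, while as $\omega\to-\infty$ the deep concentrated well of the nearly full-line soliton produces exactly one bound state below zero (mirroring the P\"{o}schl--Teller count), so $\lambda_1(\omega)<0$. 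Assembling the two sectors yields $n(\mathcal{L})=1+(N-1)=N$, $z(\mathcal{L})=0$ for $\omega<\omega_*$; $n(\mathcal{L})=1$, $z(\mathcal{L})=N-1$ at $\omega_*$; and $n(\mathcal{L})=1$, $z(\mathcal{L})=0$ for $\omega\in(\omega_*,0)$, as claimed. The principal obstacle is precisely this single-crossing/monotonicity statement: a direct Feynman--Hellmann computation of $d\lambda_1/d\omega$ has indefinite sign, and it is here that the period-function machinery is indispensable.
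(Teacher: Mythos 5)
Your proposal follows the same skeleton as the paper's proof: decompose by the loop-permutation symmetry into a symmetric sector and an $(N-1)$-fold ``anti-symmetric'' sector that reduces to a scalar Dirichlet problem on one loop (the paper phrases this as the three sets $\mathcal{S}_1,\mathcal{S}_2,\mathcal{S}_3$ of Lemma \ref{lem-vertex-cond}); dispose of the odd Dirichlet sector via $\phi'$ (Lemma \ref{second-pos}); and convert the even Dirichlet zero mode into a critical point of the time map (your identity for $\psi(\pi)$ is the paper's (\ref{s-boundary}) in Lemma \ref{beta-0-cond}, up to reparameterizing the level curve by the central amplitude instead of the boundary slope). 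The endpoint asymptotics and the final assembly of the counts are also correct. However, the two steps you flag as routine are precisely where the paper has to work hardest, and the justifications you offer for them do not go through.

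First, the symmetric sector. You exclude a symmetric zero mode on the grounds that ``no nontrivial $L^2$ solution on the half-line can match the boundary conditions.'' That is false as stated: at $\lambda=0$ the half-line equation has the decaying solution $V_0(z;0)=\tfrac12\,{\rm sech}(z+a)\tanh(z+a)$ from (\ref{v-0}), a perfectly admissible $L^2$ tail. What must be shown is that this tail cannot be glued to the even loop solution through the vertex conditions with Kirchhoff weight $2N$; this is exactly Lemma \ref{gamma-2-nonzero}, which requires a genuine computation (two incompatible expressions for the logarithmic derivative $2N s'(\pi\epsilon)/s(\pi\epsilon)$ obtained by differentiating the boundary data along the solution branch, with the case $p_0=p_*$ treated separately). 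Similarly, ``all higher symmetric eigenvalues positive for every $\omega$'' is not a consequence of the monotone single-lobe profile alone: the paper obtains it by combining a positivity/orthogonality argument (Lemma \ref{second-eig-L}: a second eigenvalue living in $\mathcal{S}_1$ would have a strictly positive eigenfunction, contradicting orthogonality to the ground state) with the non-vanishing of $\gamma_2$ and continuation from $\omega\to 0^-$.

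Second, the single crossing. You attribute it to ``the monotonicity of the period function for the cubic potential,'' but $T_+(p_0,q_0)$ is monotone in $q_0$ only for $p_0\in(0,p_*]$ (Lemma \ref{monotone-on-left}); for $p_0\in(p_*,1)$ --- which is where the bifurcation value $p_{\rm bif}$ actually lies --- it is non-monotone, tending to $0$ as $q_0\to 0$ and as $q_0\to\infty$ (Lemma \ref{nonmonotone-on-right}), so it necessarily has interior critical points there. The substance of the proof is showing that the critical-point locus $C_1$ meets the constraint curve $q_0=\tfrac{1}{2N}\sqrt{A(p_0)}$ exactly once. The paper does this by (i) excluding two critical points on the same energy level using $\beta_2>0$ (Lemmas \ref{second-pos} and \ref{no-double-roots}), (ii) tracking $q_{\max}(p_0)$ from $0$ at $p_*$ to beyond the homoclinic loop (Lemma \ref{behaviour-qmax}), and (iii) a sign computation for $\mathcal{F}'$ at any root (Lemma \ref{no-double-intersections}). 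None of this follows from a monotonicity assertion, so as written your plan has a genuine gap at its central step.
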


Figure \ref{fig-branches} shows the bifurcation diagram on the parameter plane $(\omega,\mu)$
in the case $N = 2$ (left) and $N = 3$ (right). 
The blue line on Fig. \ref{fig-branches} shows the symmetric state $\Phi$. 
At the bifurcation point $\omega_*$ of Theorem \ref{global-stability},
$(N-1)$ branches of positive asymmetric single-lobe states appear. 
These asymmetric states are defined as follows.

\begin{definition}
\label{def-asymmetric}
Fix $1 \leq K \leq N-1$. We say that the positive single-lobe state $\Phi \in H^1_C(\Gamma_N)$ is
asymmetric and $K$-split if, up to permutation between the components in the $N$ loops,
components of $\Phi$ satisfy the condition:
\begin{equation}
\label{K-symmetry}
\phi_1(x) = \dots = \phi_K(x), \quad \phi_{K+1}(x) = \dots = \phi_N(x), \quad \textrm{ for } x \in [-\pi, \pi].
\end{equation}
For convenience, we denote the positive single-lobe state satisfying (\ref{K-symmetry}) by $\Phi^{(K)}$
and assume that the $K$ components have larger amplitudes ($L^{\infty}$ norms on the corresponding edges),
whereas the $(N-K)$ components have smaller amplitudes.
\end{definition}

\begin{figure}[htbp] 
   \centering
   \includegraphics[width=3in, height = 2in]{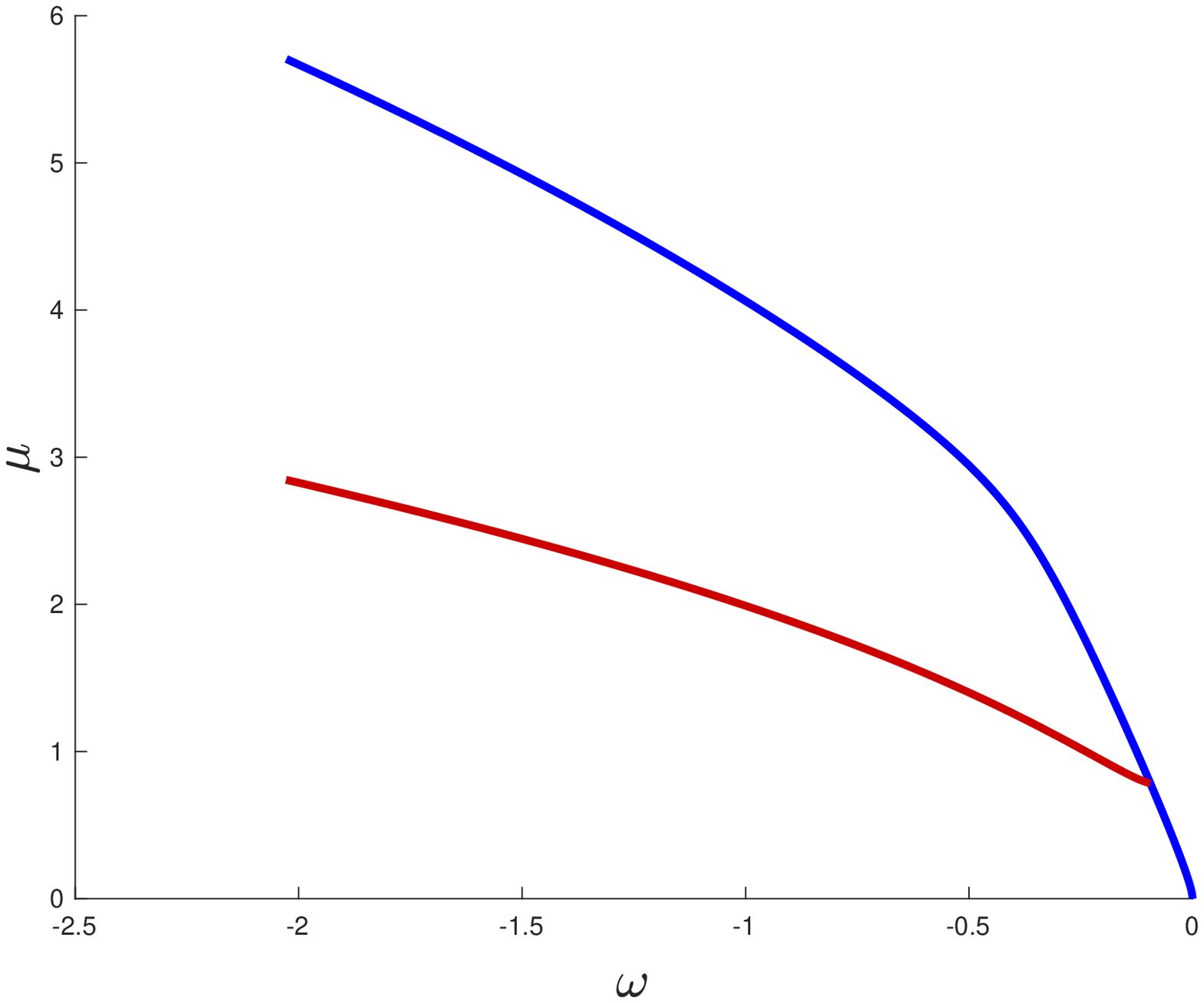}
   \includegraphics[width=3in, height = 2in]{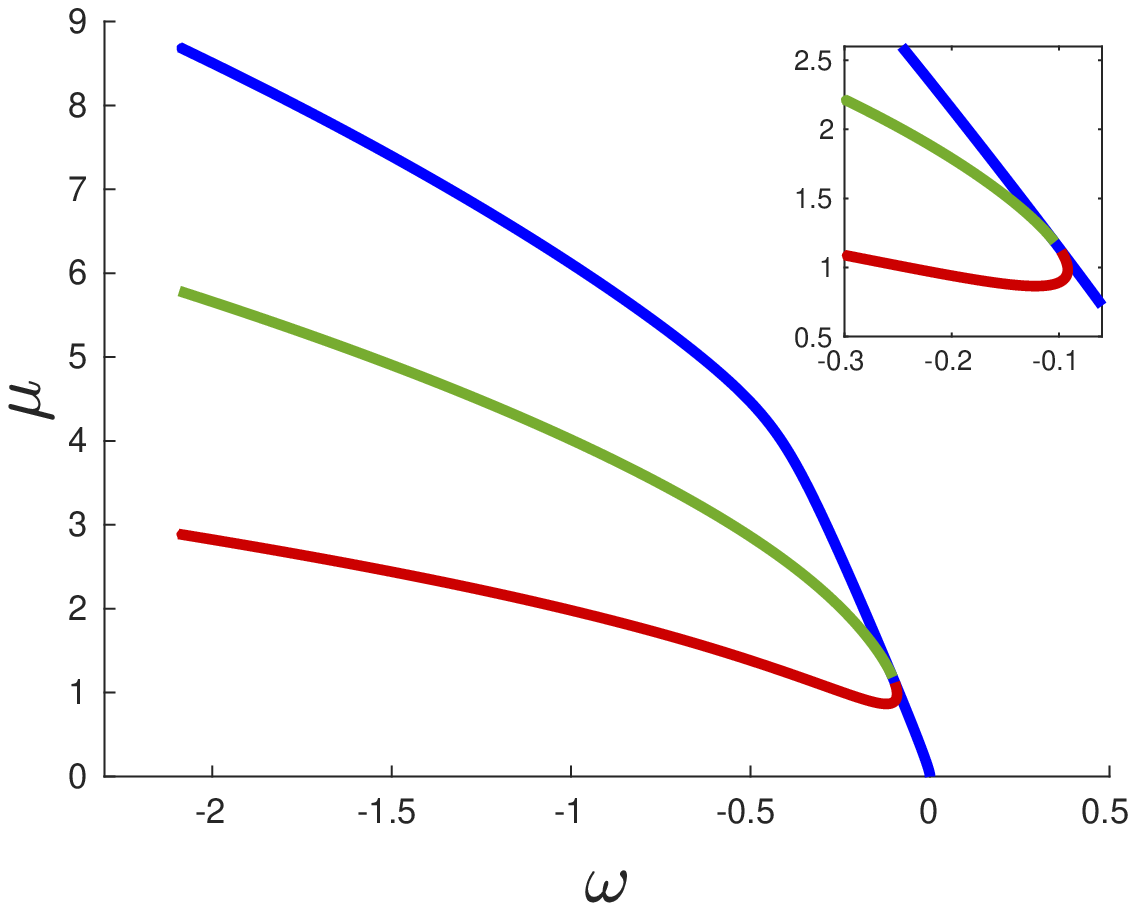}
   \caption{The bifurcation diagram of positive single-lobe states on the parameter plane $(\omega,\mu)$
   for $N = 2$ (left) and $N=3$ (right). The blue line shows the positive single-lobe symmetric state $\Phi$.
   The red line is the single-lobe state $\Phi^{(1)}$ with one component having larger amplitude than the other components.
   The green line (for $N = 3$) is the single-lobe state $\Phi^{(2)}$ with two components having larger amplitudes than the third one. }
   \label{fig-branches}
\end{figure}

Recall again that the blue line on Fig. \ref{fig-branches} depicts the symmetric state $\Phi$, which undergoes a bifurcation at $\omega = \omega_*$. 
The asymmetric $K$-split state $\Phi^{(K)}$ appears at the bifurcation point.
It follows from the insert of Fig. \ref{fig-branches} (right) for $N = 3$ that the branch of $\Phi^{(2)}$ given by the green line
is only located for $\omega < \omega_*$, whereas the branch of $\Phi^{(1)}$ given by the red line exists for
$\omega > \omega_*$ near the bifurcation point at $\omega_*$ and has a fold point at $\omega_1 \in (\omega_*,0)$.
The branch turns at the fold point and extends for every $\omega < \omega_1$. Hence, two points on the same
branch are located for a fixed value of $\omega$ in $(\omega_*,\omega_1)$.
Details of the numerical approximation
which produce the bifurcation diagram on Figure \ref{fig-branches} are described in Section \ref{sec-numerics}.

Although the behavior of $(N-1)$ branches can be complicated near the bifurcation point $\omega_*$, it becomes simple for large negative
values of $\omega$. Our third main result states a rather simple characterization of the positive single-lobe asymmetric states
for large negative $\omega$. The proof of this result is given in Section \ref{sec-global-bifurcations}.

\begin{theorem}
\label{global-bifurcations}
There exists $\omega_{\infty} \in (-\infty,\omega_*)$ such that
for every $\omega \in (-\infty,\omega_{\infty})$
there are exactly $N$ (up to permutations between the components in the $N$ loops)
positive single-lobe states $\Phi^{(K)} \in H^2_{\rm NK}(\Gamma_N)$ with $1 \leq K \leq N$,
which satisfy the stationary NLS equation (\ref{nls-stat}), are symmetric on each loop parameterized
by $[-\pi,\pi]$, and are monotonically decreasing on the half-line $[0,\infty)$. Moreover,
the first $K$ components in (\ref{K-symmetry}) are monotonically decreasing on $[0,\pi]$ and
the other $N-K$ components in (\ref{K-symmetry}) are monotonically increasing on $[0,\pi]$.
For every $K$, the map $(-\infty,\omega_{\infty}) \ni \omega \mapsto \Phi^{(K)}(\cdot,\omega) \in H^2_{\rm NK}(\Gamma_N)$
is $C^1$ and the mass $\mu^{(K)}(\omega) := Q(\Phi^{(K)}(\cdot,\omega))$ is a $C^1$ monotonically
decreasing function satisfying the limits
$\mu^{(K)}(\omega) \to \infty$ as $\omega \to -\infty$. Moreover,
\begin{equation}
\label{ordering-mass}
\mu^{(1)}(\omega) < \mu^{(2)}(\omega) < \dots < \mu^{(N-1)}(\omega) < \mu^{(N)}(\omega), \quad \omega \in (-\infty,\omega_{\infty}),
\end{equation}
where $\Phi^{(N)} = \Phi$ with $\mu^{(N)}(\omega) = \mu(\omega)$ are given by the symmetric state in Theorem \ref{global-existence}.
\end{theorem}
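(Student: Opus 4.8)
The plan is to reduce the construction of the states $\Phi^{(K)}$ to a finite-dimensional shooting problem governed by the period function, and then to count its solutions as $\omega \to -\infty$. On each loop the components solve the autonomous equation $\phi_j'' = |\omega|\,\phi_j - 2\phi_j^3$, which admits the first integral $\tfrac12 (\phi_j')^2 + V(\phi_j) = h$ with $V(\phi) = \tfrac12\phi^4 - \tfrac{|\omega|}{2}\phi^2$. A component that is symmetric on $[-\pi,\pi]$ has $\phi_j'(0) = 0$, so its center value is a turning point of the orbit, and single-lobe monotonicity on $[0,\pi]$ means the orbit runs monotonically from that turning point to the vertex value. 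On the half-line, $\phi_0 \to 0$ forces $h = 0$, so $\phi_0$ lies on the homoclinic level and $\phi_0'(0) = -b\sqrt{|\omega| - b^2}$, where $b := \phi_0(0)$ is the common vertex value fixed by the continuity condition in (\ref{KBC}). Since each $\phi_j$ is even, $\phi_j'(-\pi) = -\phi_j'(\pi)$, and the Kirchhoff condition in (\ref{KBC}) collapses to $2\sum_{j=1}^N \phi_j'(\pi) = \phi_0'(0)$.

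First I would solve the loop shooting problem for fixed $\omega$ and fixed $b$. Writing the half-loop travel time from the center to the vertex as a quarter-period-type integral of $(2(V(a) - V(\phi)))^{-1/2}$, the requirement that this time equal $\pi$ selects the admissible center amplitudes $a$. Using the monotonicity of the period function developed earlier (the main analytic engine of the paper), I would show that, for $\omega$ sufficiently negative and $b$ in the relevant range, this equation has exactly two roots: a large amplitude $a_+(b) > b$ giving a decreasing component with $\phi_j'(\pi) < 0$, and a small amplitude $a_-(b) < b$ giving an increasing component with $\phi_j'(\pi) > 0$, both depending on $b$ in a $C^1$ fashion. This exactly-two-branches statement is the mechanism producing the large/small dichotomy of Definition \ref{def-asymmetric} and is what forbids any other amplitude configuration on a loop.

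Next I would exploit that the loops decouple once $b$ is fixed: each loop independently selects the large or the small branch, so up to permutation a solution is labeled by the number $K$ of large components, $K \in \{0,1,\dots,N\}$. The all-small case $K=0$ is impossible, since it would force the left side of the Kirchhoff relation to be strictly positive while $\phi_0'(0) < 0$. For each remaining $K \in \{1,\dots,N\}$, substituting $a_\pm(b)$ into the Kirchhoff relation yields a single scalar equation $F_K(b,\omega) = 0$, and "exactly $N$ states" reduces to showing that $F_K(\cdot,\omega)$ has exactly one root $b_K(\omega)$ for every $\omega < \omega_\infty$. To make this tractable I would rescale $\phi(x) = \sqrt{|\omega|}\,\psi(\sqrt{|\omega|}\,x)$, turning each loop into the long interval $[-\pi\sqrt{|\omega|},\pi\sqrt{|\omega|}]$ for the $\omega$-independent equation $\psi'' = \psi - 2\psi^3$; in this limit the large components converge to the soliton profile $\sech$ while the small components, the half-line tail, and hence the shared vertex value $b$ are all exponentially small in $\sqrt{|\omega|}$. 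The surviving root of the reduced equation is nondegenerate, which both pins down $\omega_\infty$ (the threshold below which this root persists and $\partial_b F_K \neq 0$) and, via the implicit function theorem, yields the $C^1$ dependence $\omega \mapsto \Phi^{(K)}(\cdot,\omega)$. Taking $K=N$ recovers the symmetric state of Theorem \ref{global-existence}, so $\Phi^{(N)} = \Phi$ and $\mu^{(N)} = \mu$.

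Finally I would read off the mass statements from the same scaling: the $K$ near-soliton loop components each carry mass $\approx 2\sqrt{|\omega|}$ while all remaining pieces are exponentially small, so $Q(\Phi^{(K)}) = 2K\sqrt{|\omega|}\,(1+o(1))$ as $\omega \to -\infty$, which gives divergence and monotone decrease of each $\mu^{(K)}$ and, through the $K$-dependent leading coefficient, the ordering (\ref{ordering-mass}). The hard part will be the exact-count step: the three derivative terms entering the Kirchhoff relation are all of the same (exponentially small) order and nearly cancel, so the admissible value of $b$ is fixed only at the level of the subleading corrections. Proving that $F_K(\cdot,\omega)$ then has exactly one root—no spurious additional solutions—requires the global (not merely local) monotonicity of the period function together with uniform control of these exponentially small tail contributions, so that the solution count cannot jump as $\omega$ decreases through $(-\infty,\omega_\infty)$; upgrading the asymptotic strict inequalities in (\ref{ordering-mass}) to hold for every $\omega < \omega_\infty$ demands the same careful expansion.
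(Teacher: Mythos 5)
Your overall strategy---classify each loop component by the phase-plane orbit through the vertex value, use monotonicity of the period function to get a large/small dichotomy, exclude the all-small case by the sign of the Kirchhoff sum, reduce to a scalar equation, and read off the mass ordering from the soliton-limit asymptotics $\mu^{(K)}\sim 2K\sqrt{|\omega|}$---is the same engine the paper uses. The one genuinely different choice is the direction of the shooting: you fix $\omega$ and solve the Kirchhoff relation $F_K(b,\omega)=0$ for the vertex value $b$, which forces you into the exponentially small near-cancellation that you yourself flag as ``the hard part'' and do not resolve. The paper inverts the roles: it fixes the vertex datum $p_0$ (equivalently the soliton shift $a$, with $q_0$ then determined on the half-line) and treats the two unknown loop derivatives $(q_1,q_N)$ as the shooting variables, solving the system $T_+(p_0,q_1)=T_-(p_0,q_N)$, $2Kq_1-2(N-K)q_N=\sqrt{A(p_0)}$ (Lemma \ref{up-down-state}). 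In that formulation the function $F(q_1)=T_+(p_0,q_1)-T_-(p_0,q_N(q_1))$ is \emph{globally} strictly monotone on the admissible interval by Lemmas \ref{monotone-on-left} and \ref{monotone-T-minus}, runs from a positive value to $-\infty$, and hence has exactly one root for \emph{every} $p_0\in(0,p_*)$---no asymptotic expansion, no cancellation of exponentially small terms. The value of $\epsilon$ (hence $\omega$) is then read off a posteriori as $\pi\epsilon=T_+(p_0,q_1(p_0;K))$, and the count at fixed $\omega$ follows from the monotonicity of $p_0\mapsto T_+(p_0,q_1(p_0;K))$ for small $p_0$ (Lemma \ref{unique-T-div}).

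So the concrete gap in your proposal is exactly the step you identified: with your parameterization, proving that $F_K(\cdot,\omega)$ has exactly one root requires uniform control of the subleading exponentially small tail contributions, and a nondegeneracy claim for the reduced root that you assert but do not establish; without it the solution count could in principle jump as $\omega$ decreases, and the strict inequalities in (\ref{ordering-mass}) would only be known asymptotically along a sequence. This is not an intrinsic obstruction---it is an artifact of solving for $b$ at fixed $\omega$---and it disappears if you swap the roles of the parameters as the paper does. One further caution: your claim that the loop shooting problem has ``exactly two roots'' $a_\pm(b)$ relies on monotonicity of both $T_+$ and $T_-$ in the shooting variable, which holds only for $p_0\in(0,p_*)$ (for $p_0\in(p_*,1)$ the function $T_+$ is non-monotone by Lemma \ref{nonmonotone-on-right}, which is precisely what produces the fold of $\Phi^{(1)}$ near the bifurcation); you therefore need to justify, not merely assume, that in the regime $\omega<\omega_\infty$ every admissible state has its vertex value in the monotone range.
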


It follows from the characterization of local minimizers of energy in the limit of large mass in \cite{AST}
that the Morse index of $\Phi^{(K=1)}$ is $1$, whereas Theorem \ref{global-bifurcations}
defines a monotonically decreasing map $\omega \mapsto \mu^{(K=1)}(\omega)$ for large negative $\omega$. By Theorems
\ref{global-existence} and \ref{global-stability}, the Morse index of $\Phi \equiv \Phi^{(N)}$
is $1$ for small negative $\omega$ and the map $\omega \mapsto \mu^{(K=N)}(\omega)$ is monotonically decreasing for every $\omega$.
By the standard theory of orbital stability of standing waves, the following corollary is deduced
from these results.

\begin{corollary}
\label{corollary-global}
Assume $N \geq 2$. There exist $\mu_*$ and $\mu_{\infty}$ satisfying $0 < \mu_* \leq \mu_{\infty} < \infty$ such that
the positive single-lobe symmetric state $\Phi^{(K=N)} = \Phi$ of Theorem \ref{global-existence} is a
local minimizer of energy $E(\Psi)$ subject to the fixed mass $Q(\Psi) = \mu$ for $\mu \in (0,\mu_*)$,
whereas the positive single-lobe state $\Phi^{(K=1)}$ of Theorem \ref{global-bifurcations}
is a local minimizer of energy $E(\Psi)$ subject to the fixed mass $Q(\Psi) = \mu$ for $\mu \in (\mu_{\infty},\infty)$.
Moreover, $\mu_* = \mu(\omega_*) = Q(\Phi(\cdot,\omega_*))$, where $\omega_*$ is defined in
Theorem \ref{global-stability}.
\end{corollary}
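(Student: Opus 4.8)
The plan is to deduce the corollary from the standard variational characterization of orbitally stable standing waves (the Vakhitov--Kolokolov / Grillakis--Shatah--Strauss criterion), feeding into it the spectral and monotonicity data already established in Theorems \ref{global-existence}, \ref{global-stability} and \ref{global-bifurcations} together with the large-mass result of \cite{AST}. Both $\Phi$ and $\Phi^{(1)}$ are critical points of the action $E - \omega Q$ with Lagrange multiplier equal to the frequency $\omega$, so each is a local minimizer of $E$ on the constraint $\{Q = \mu\}$ precisely when its constrained Hessian is positive. Splitting perturbations into real and imaginary parts, the imaginary part is governed by $\mathcal{L}_- := -\Delta - \omega - 2\Phi^2$, which satisfies $\mathcal{L}_- \geq 0$ with kernel $\mathrm{span}\{\Phi\}$ since $\Phi > 0$ is the ground state of $\mathcal{L}_-$; this accounts exactly for the gauge direction and contributes no instability. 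Hence the local-minimizer property reduces to the positivity, modulo the gauge direction, of the operator $\mathcal{L}$ from (\ref{Jacobian}) restricted to the tangent space $\{\Phi\}^{\perp} = \{ v : \langle \Phi, v\rangle_{L^2(\Gamma_N)} = 0\}$.

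To control this restricted operator I would combine the standard constrained-index count with a slope computation. Differentiating the stationary equation (\ref{nls-stat}) in $\omega$ gives $\mathcal{L}\,\partial_\omega\Phi = \Phi$, so that
\[
\langle \mathcal{L}^{-1}\Phi, \Phi\rangle_{L^2(\Gamma_N)} = \langle \partial_\omega\Phi, \Phi\rangle_{L^2(\Gamma_N)} = \tfrac{1}{2}\,\mu'(\omega).
\]
By Theorem \ref{global-existence} the map $\omega \mapsto \mu(\omega)$ is monotonically decreasing, so $\mu'(\omega) < 0$ and the left-hand side is negative; the identical computation applies to $\Phi^{(1)}$ using the monotonicity of $\mu^{(1)}(\omega)$ from Theorem \ref{global-bifurcations}. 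The standard index theorem then asserts that, whenever $z(\mathcal{L}) = 0$ and $\langle \mathcal{L}^{-1}\Phi,\Phi\rangle_{L^2(\Gamma_N)} < 0$, the number of negative eigenvalues of $\mathcal{L}$ on $\{\Phi\}^{\perp}$ equals $n(\mathcal{L}) - 1$. Consequently the relevant state is a strict local minimizer of $E$ at fixed mass exactly when $n(\mathcal{L}) = 1$ and $z(\mathcal{L}) = 0$, since then the restricted operator is bounded below by a positive constant (the continuous spectrum starts at $|\omega| > 0$ by (\ref{abs-cont-part})), giving coercivity of $E - \omega Q$ near $\Phi$ on the constraint.

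For the symmetric state $\Phi = \Phi^{(N)}$ I would invoke Theorem \ref{global-stability}, which supplies $n(\mathcal{L}) = 1$ and $z(\mathcal{L}) = 0$ for every $\omega \in (\omega_*,0)$. By the reduction above, $\Phi$ is then a local minimizer of $E$ at fixed mass throughout this frequency interval. Since $\omega \mapsto \mu(\omega)$ is a decreasing bijection from $(-\infty,0)$ onto $(0,\infty)$ by Theorem \ref{global-existence}, the interval $(\omega_*,0)$ corresponds exactly to $\mu \in (0,\mu_*)$ with $\mu_* := \mu(\omega_*)$, which establishes the first assertion and the stated value of $\mu_*$.

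For the asymmetric state $\Phi^{(1)}$ the plan is to use the large-mass characterization of local energy minimizers from \cite{AST} quoted after Theorem \ref{global-bifurcations}, which yields Morse index one and trivial kernel for the linearization about $\Phi^{(1)}$ once the mass is sufficiently large. Together with $\tfrac{d}{d\omega}\mu^{(1)}(\omega) < 0$ from Theorem \ref{global-bifurcations}, the index computation again gives a vanishing restricted Morse index, so $\Phi^{(1)}$ is a local minimizer of $E$ at fixed mass for all $\mu$ exceeding a finite threshold; taking $\mu_{\infty}$ to be this threshold (enlarged to $\mu_*$ if necessary) produces the interval $(\mu_{\infty},\infty)$ together with $0 < \mu_* \leq \mu_{\infty} < \infty$. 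The hard part will be precisely this last step: one must extract from \cite{AST} that the Morse index of $\Phi^{(1)}$ is exactly one with trivial kernel throughout the large-mass regime -- the discrete $S_{N-1}$ symmetry permuting the small components produces no continuous kernel and hence no extra neutral direction -- and then match the large-mass asymptotics of \cite{AST} to the globally parametrized branch of Theorem \ref{global-bifurcations} so that a single finite $\mu_{\infty}$ works.
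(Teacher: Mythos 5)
Your proposal is correct and follows essentially the same route as the paper, which deduces the corollary in one line from ``the standard theory of orbital stability of standing waves'' using exactly the ingredients you assemble: $n(\mathcal{L})=1$, $z(\mathcal{L})=0$ for $\omega\in(\omega_*,0)$ from Theorem \ref{global-stability}, the Morse index one for $\Phi^{(K=1)}$ at large mass from \cite{AST}, and the monotone decrease of $\mu(\omega)$ and $\mu^{(1)}(\omega)$ from Theorems \ref{global-existence} and \ref{global-bifurcations}. Your expansion of the Grillakis--Shatah--Strauss/Vakhitov--Kolokolov index count (including the slope identity $\langle\mathcal{L}^{-1}\Phi,\Phi\rangle=\tfrac12\mu'(\omega)$ and the honest flag that matching the \cite{AST} minimizers to the branch of Theorem \ref{global-bifurcations} is the delicate step) simply makes explicit what the paper leaves implicit.
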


\begin{remark}
\label{remark-flower}
One can show by the methods used
in \cite{BMP} and \cite{NPS} that the symmetric state $\Phi$ of Theorem \ref{global-existence}
is the ground state of the constrained minimization problem (\ref{min}) for small $\mu$,
whereas the asymmetric state $\Phi^{(K=1)}$ of Theorem \ref{global-bifurcations} is not
the ground state for large $\mu$ if $N \geq 2$, because
the infimum of the constrained minimization problem (\ref{min}) is not attained.
These results are given in Appendices \ref{appendix-small} and \ref{appendix-large} for completeness.
\end{remark}

\begin{remark}
\label{remark-tadpole}
For the tadpole graph ($N=1)$, no $\omega_*$ or $\mu_*$ exist and the symmetric state of Theorem
\ref{global-existence} is a local constrained minimizer of energy for every $\omega \in (-\infty,0)$.
Moreover, by Corollary 3.4 and the construction on Figure 4 in \cite{AdamiJFA}, it is the ground state of energy for every mass
$\mu \in (0,\infty)$.
\end{remark}

It follows from Proposition 3.3 in \cite{AdamiCV} that positive states are the only candidates for minimizers of the energy $E(\Psi)$
subject to the fixed mass $Q(\Psi) = \mu$. By Theorem 2.2 in \cite{AdamiCV}, $\mathcal{E}_{\mu}$ satisfies the bounds
\begin{equation}
\label{bounds-on-E}
-\frac{1}{3} \mu^3 \leq \mathcal{E}_{\mu} \leq -\frac{1}{12} \mu^3,
\end{equation}
where the lower bound is the energy of a half-soliton on a half-line with the same mass $\mu$
and the upper bound is the energy of a full soliton on a full line with the same mass $\mu$. By Theorem
3.3 and Corollary 3.4 in \cite{AdamiJFA}, the infimum is attained if there exists $\Psi_* \in H^2_{\rm NK}(\Gamma_N)$
such that $E(\Psi_*) \leq -\frac{1}{12} \mu^3$.

Figure \ref{fig-N-1} shows the branch of the positive single-lobe state $\Phi$ in the case $N = 1$ on the $(\omega,\mu)$ plane
(left) and on the $(\mu,\eta)$ plane (right), where $\eta := E(\Phi)$. The shaded area on Figure \ref{fig-N-1} (right) is defined between
the lower and upper bounds in (\ref{bounds-on-E}). The branches are computed numerically by
using the numerical methods based on the period function, see Section \ref{sec-numerics}.

In agreement with Remark \ref{remark-tadpole},
the positive single-lobe state for $N = 1$ is the ground state of the constrained minimization problem (\ref{min}) in the sense that
the solution branch on the $(\mu,\eta)$ plane is located in the shaded area for every $\mu > 0$. It approaches
the lower bound as $\mu \to 0$ when $\Phi$ is close to the half-soliton on the half-line and it approaches the upper bound
as $\mu \to \infty$ when $\Phi$ is close to the full soliton on the full line (see Appendices \ref{appendix-small} and \ref{appendix-large}).

\begin{figure}[htbp] 
   \centering
   \includegraphics[width=3in, height = 2in]{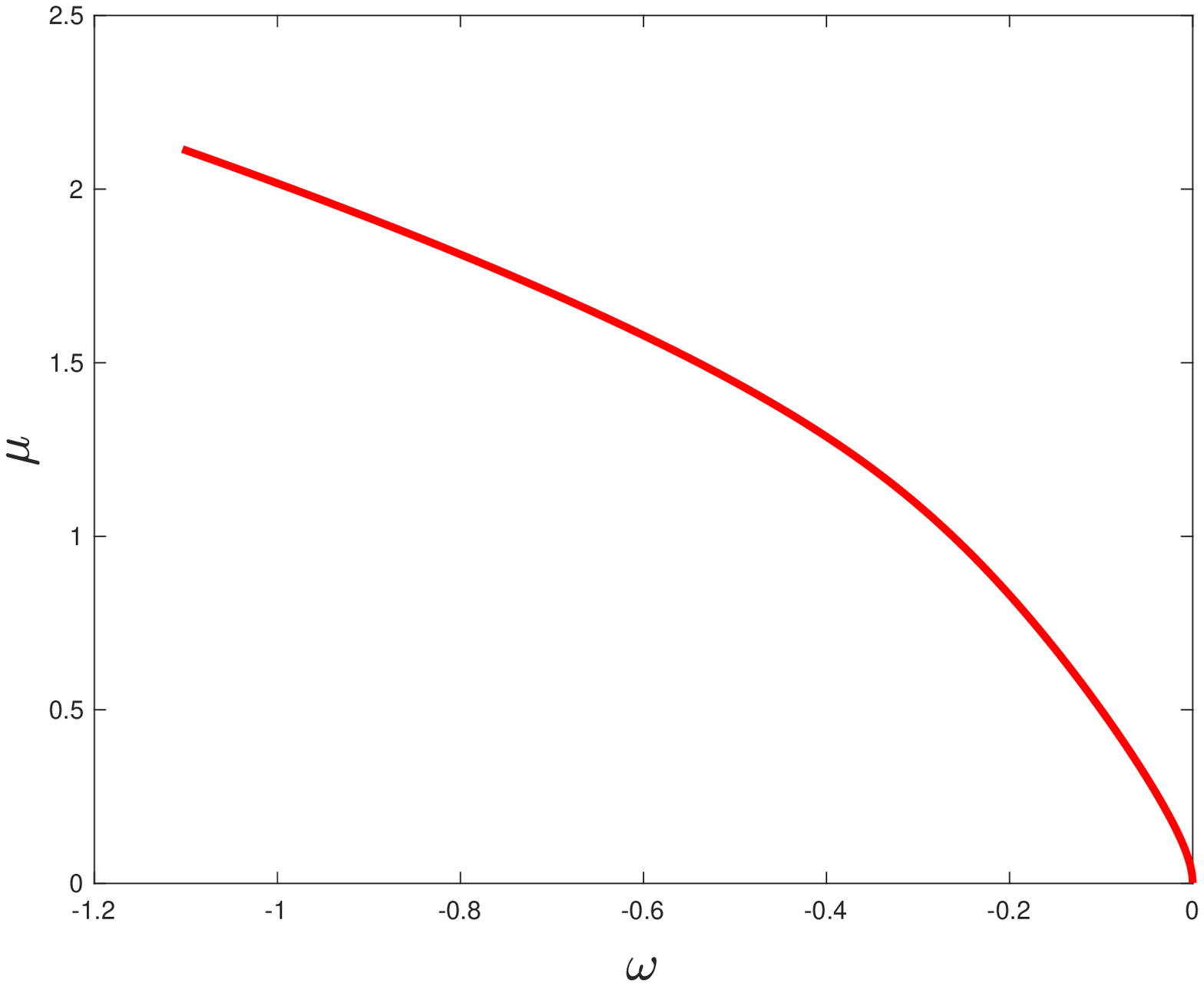}
   \includegraphics[width=3in, height = 2in]{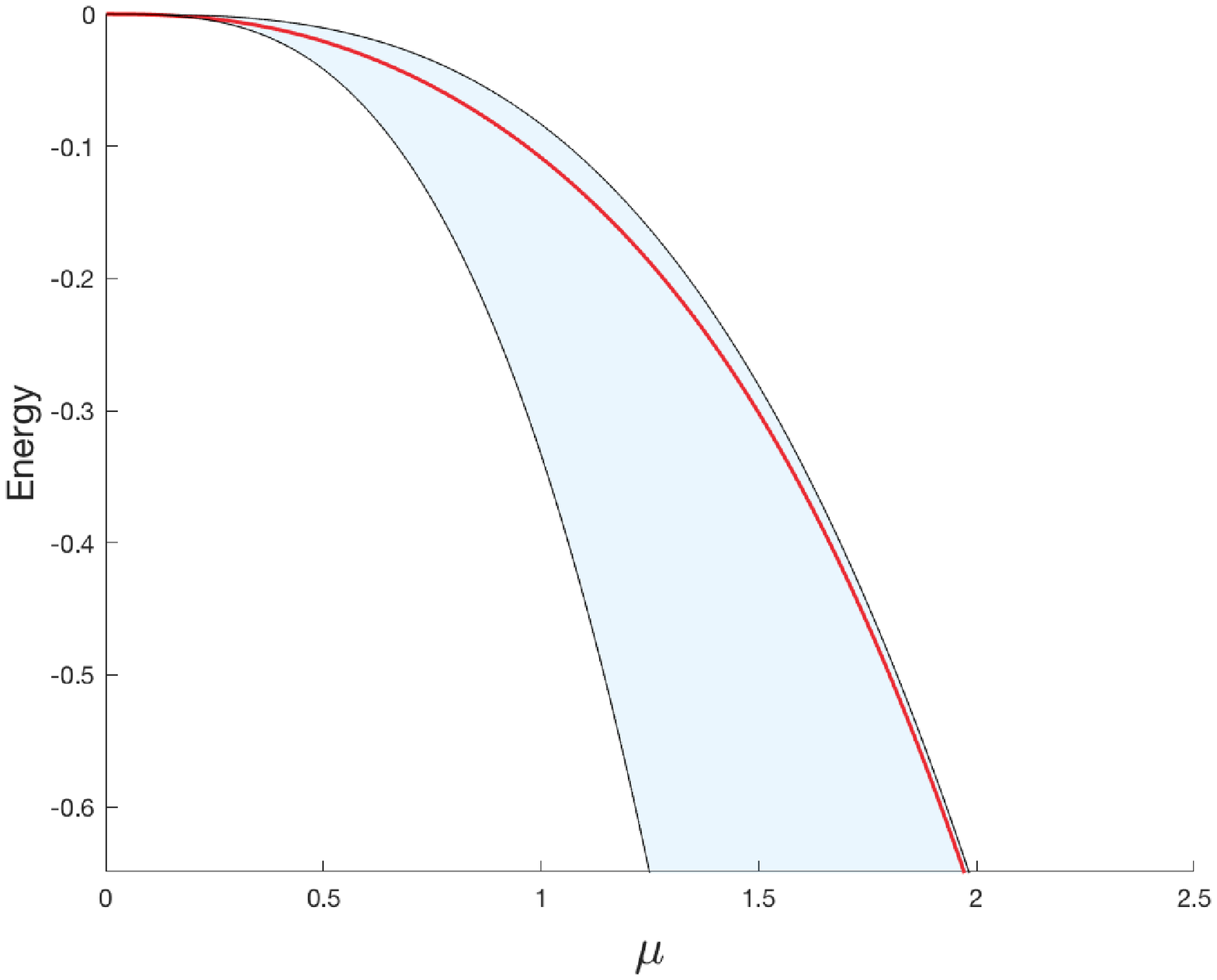}
   \caption{The branch of the positive single-lobe state $\Phi$ in the case $N = 1$ on the plane $(\omega,\mu)$
(left) and on the mass--energy plane (right).}
\label{fig-N-1}
\end{figure}

Figure \ref{fig-N-2} shows numerically computed branches of the positive single-lobe states on the $(\mu,\eta)$ plane
for $N = 2$ (left) and $N = 3$ (right). Compared to the case $N = 1$ on Figure \ref{fig-N-1} (right)
and in agreement with Remark \ref{remark-flower},
the branch for the positive single-lobe symmetric state $\Phi$ is located inside the shaded region
only for small mass $\mu$ and it goes beyond the shaded region, where the bifurcation of Theorem
\ref{global-stability} occurs. All new branches of positive single-lobe asymmetric states in Theorem \ref{global-bifurcations}
bifurcating from the branch for $\Phi$ stay away from the shaded region, hence these states are not the ground state
of the constrained minimization problem (\ref{min})
for any $\mu > 0$. Nevertheless, we note that the branch for $\Phi^{(K=N)} = \Phi$ is close to the lower bound as $\mu \to 0$ and the branch for $\Phi^{(K=1)}$
approaches the upper bound as $\mu \to \infty$ from the unshaded region.

\begin{figure}[htbp] 
	\centering
	\includegraphics[width=3in, height = 2in]{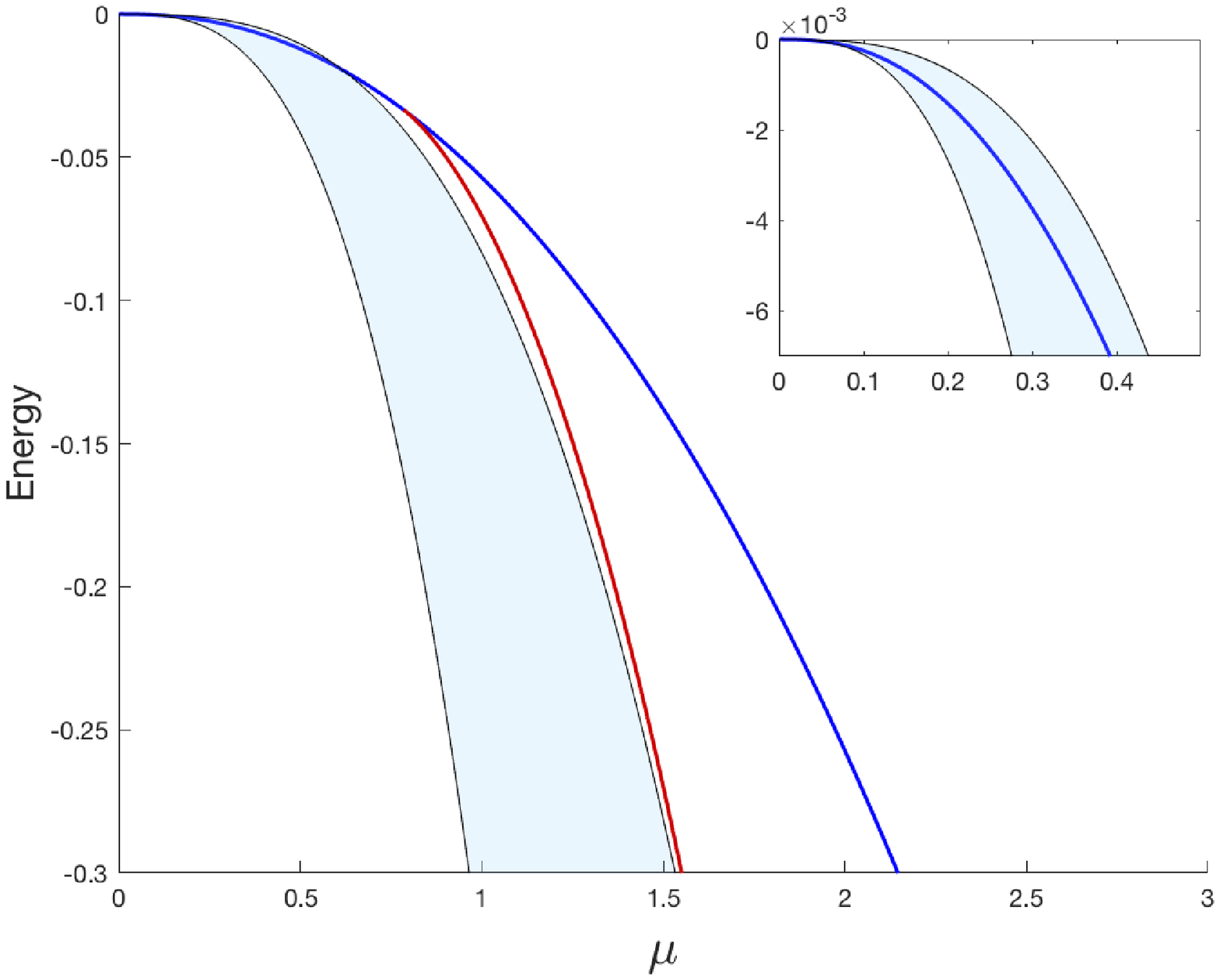}
	\includegraphics[width=3in, height = 2in]{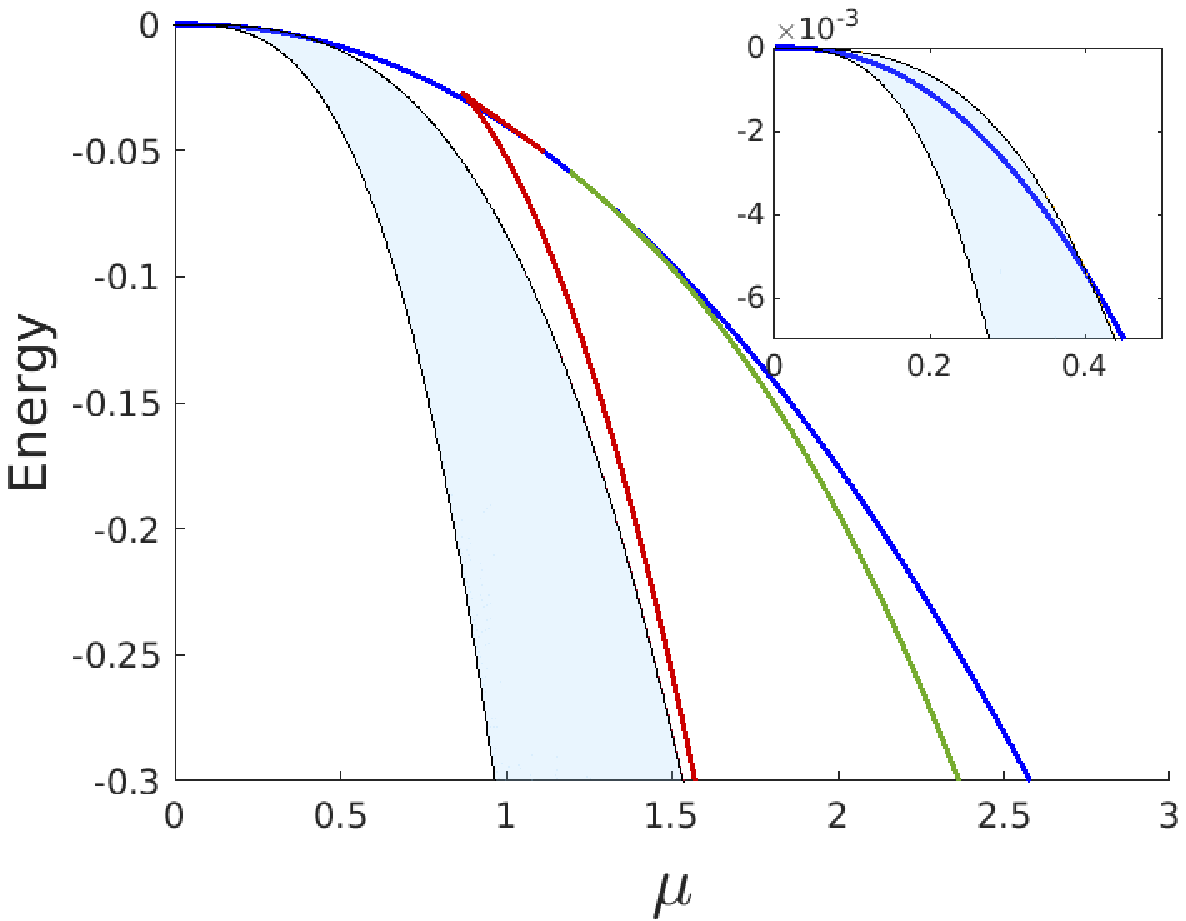}
	\caption{Bifurcation diagram of positive single-lobe states on the mass--energy plane
		for $N = 2$ (left) and $N=3$ (right).}
	\label{fig-N-2}
\end{figure}

\section{Existence of the positive single-lobe symmetric state}
\label{sec-global-existence}

Here we reformulate the stationary NLS equation (\ref{nls-stat}) 
equipped with the Neumann--Kirchhoff conditions (\ref{KBC}) in the form
for which we can use the dynamical system theory for orbits on the plane, e.g.
the period function. Then, we obtain estimates on the period function and on the mass of the symmetric state,
from which we prove Theorem \ref{global-existence}.

\subsection{Reformulation of the existence problem}

We use the following scaling transformation for $\omega : = -\epsilon^2 < 0$ with $\epsilon > 0$:
\begin{equation}
\label{scaling-transform}
\phi_0(x) = \epsilon u_0(\epsilon x), \quad
\phi_j(x) = \epsilon u_j(\epsilon x), \quad j \in \{1,2,\dots,N\}.
\end{equation}
In new variables, the stationary NLS equation (\ref{nls-stat}) transforms to the following system
of differential equations:
\begin{equation}
\label{NLS-scaled}
\left\{ \begin{array}{l} -u_j''(z) + u_j(z) - 2 u_j(z)^3 = 0, \quad z \in (-\pi \epsilon,\pi \epsilon), \quad j \in \{1,2,\dots,N\}, \\
-u_0''(z) + u_0(z) - 2 u_0(z)^3 = 0, \quad z > 0, \end{array} \right.
\end{equation}
where $z = \epsilon x$. The only dependence
of system (\ref{NLS-scaled}) on $\epsilon$ is due to the length of the interval $[-\pi \epsilon,\pi \epsilon]$.
The boundary conditions (\ref{KBC}) transform to the equivalent boundary conditions:
\begin{equation}
\left\{ \begin{array}{l}
u_1(\pm \pi \epsilon) = u_2(\pm \pi \epsilon) = \dots = u_N(\pm \pi \epsilon) = u_0(0), \\
\sum_{j=1}^N u_j'(\pi \epsilon) - u_j'(-\pi \epsilon) = u_0'(0),
\end{array} \right.
\label{bvp-scaled}
\end{equation}
The only positive decaying solution to equation 
$-u_0''(z) + u_0(z) - 2 u_0(z)^3 = 0$ on the half-line is expressed by the shifted NLS soliton:
\begin{equation}
\label{soliton}
u_0(z) = {\rm sech}(z + a), \quad z > 0,
\end{equation}
where $a \in \mathbb{R}$ is an arbitrary translation parameter.
If $a > 0$, $u_0$ is monotonically decreasing on $[0,\infty)$ and if $a < 0$,
$u_0$ is non-monotone on $[0,\infty)$. In order to prove Theorem \ref{global-existence},
we only consider the positive states with the monotonically decreasing $u_0$,
hence we select $a > 0$.

Each second-order differential equation in the system (\ref{NLS-scaled})  is integrable
with the first-order invariant:
\begin{equation}
\label{invariant}
E(u,v) = v^2 - A(u), \quad v := \frac{du}{dz}, \quad A(u) := u^2 (1 - u^{2}),
\end{equation}
where the value of $E(u,v) = E$ is independent of $z$. Figure \ref{fig-phase-portrait} shows the phase portrait given by the level curves of the function $E(u,v)$ on the $(u,v)$-plane.

\begin{figure}[htbp] 
	\centering
	\includegraphics[width=4in, height = 3in]{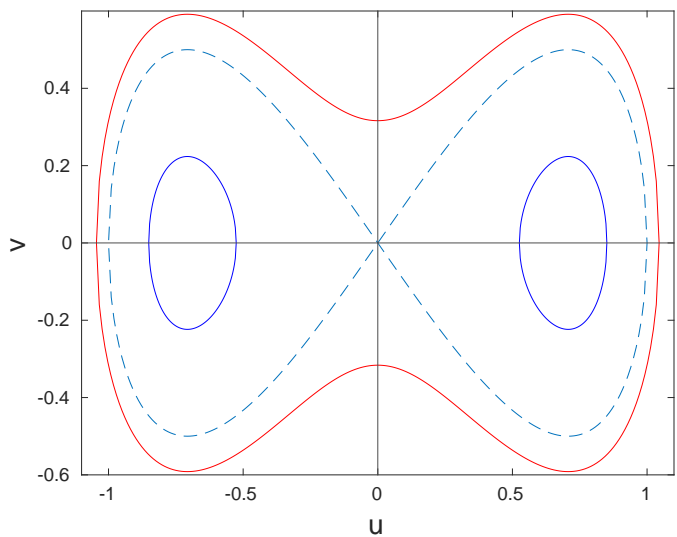}
	\caption{Phase portrait on the $(u,v)$-plane given by the level curves of the function $E(u,v)$.}
	\label{fig-phase-portrait}
\end{figure}

Since $A(u) = u^2 (1-u^2)$, there exists only one positive root of $A'(u)$ denoted as $p_*$ such that $A'(p_*) = 0$, in fact, $p_* = \frac{1}{\sqrt{2}}$.
Two homoclinic orbits exist for $E = 0$, one corresponds to positive $u$ and the other one corresponds to negative $u$. 
Periodic orbits exist inside each of the two
homoclinic loops and correspond to $E \in (E_*,0)$, where $E_* = - A(p_*) = - \frac{1}{4}$, and they correspond to either strictly positive $u$ or strictly negative $u$. Periodic orbits outside the two homoclinic loops exist for $E \in (0,\infty)$ and they correspond to sign-indefinite $u$. 
Note that 
\begin{equation}
\label{E-positivity}
E + A(p_*) > 0, \qquad E \in (E_*,\infty).
\end{equation}
The homoclinic orbit with the decaying solution (\ref{soliton}) corresponds to $E = 0$ and either $v = \sqrt{A(u)}$ if $z + a < 0$
or $v = -\sqrt{A(u)}$ if $z + a > 0$. Since $u_0$ is monotonically decreasing on $[0,\infty)$ and $a > 0$, we have $v = -\sqrt{A(u)}$ for all $z > 0$.

Let us define $p_0 := {\rm sech}(a)$, that is, the value of $u_0(z)$ at $z = 0$.
Then, $-\sqrt{A(p_0)}$ is the value of $u_0'(z)$ at $z = 0$. Note that $p_0 \in (0,1)$
is a free parameter obtained from $a \in (0,\infty)$ such that $p_0(a) \to 1$ when $a \to 0$
and $p_0(a) \to 0$ when $a \to \infty$.

Under the scaling transformation (\ref{scaling-transform}),
the symmetry condition (\ref{sym-state}) yields
\begin{equation}
\label{sym-state-scaled}
u_1(z) = u_2(z) = \dots = u_N(z), \qquad z \in [-\pi \epsilon,\pi \epsilon],
\end{equation}
hence the positive symmetric state of Definition \ref{def-symmetric} is found from the following boundary-value problem:
\begin{equation}
\left\{ \begin{array}{l} -u_1''(z) + u_1(z) - 2 u_1(z)^3 = 0, \quad z \in (-\pi \epsilon,\pi \epsilon), \\
u_1(\pi \epsilon) = u_1(-\pi \epsilon) = p_0, \\
u_1'(-\pi \epsilon) = - u_1'(\pi \epsilon) = \frac{1}{2N} \sqrt{A(p_0)},
\end{array} \right.
\label{bvp-i}
\end{equation}
where $p_0 \in (0,1)$ is a free parameter of the problem.
The positive single-lobe states of Definition \ref{def-single-lobe} correspond to a part of the level curve $E(u,v) = E$ 
which intersects $p_0$ only twice at the ends of the interval $[-\pi \epsilon,\pi \epsilon]$.

Figure \ref{fig-plane} shows a geometric construction of solutions to the boundary-value problem (\ref{bvp-i}) on the plane $(u,v)$.
The dashed line represents the homoclinic orbit at $E = 0$ with the solid part depicting the shifted NLS soliton (\ref{soliton}) for $a = 0.7$ (left) and $a = 1$ (right). The dashed-dotted
vertical line depicts the value of $p_0 = u_0(0) = {\rm sech}(a)$. The red solid line plots $q_0 = \frac{1}{2N} \sqrt{A(p_0)}$ versus $p_0 \in (0,1)$. The level curve 
$E(u,v) = E(p_0,q_0)$ at $p_0 = {\rm sech}(a)$
and $q_0 = \frac{1}{2N} \sqrt{A(p_0)}$ is shown by the dashed line, whereas the solid part depicts a suitable solution to the boundary-value problem (\ref{bvp-i}).

\begin{figure}[htbp] 
   \centering
  \includegraphics[width=2.8in, height = 2in]{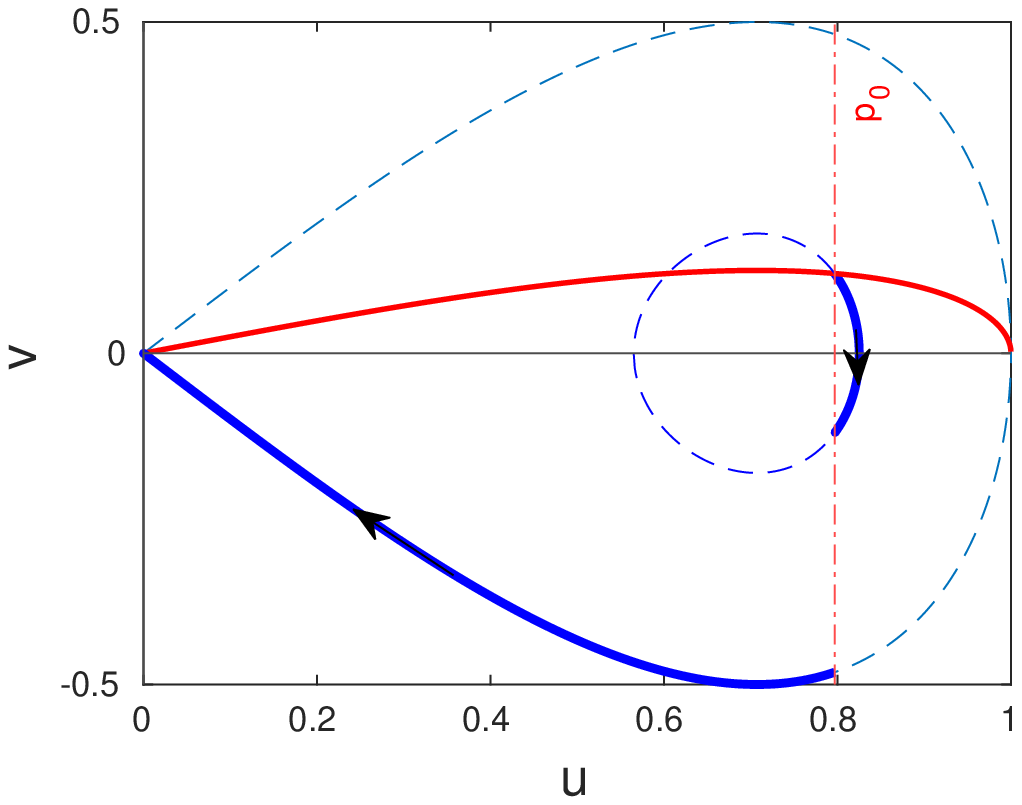}
  \includegraphics[width=2.8in, height = 2in]{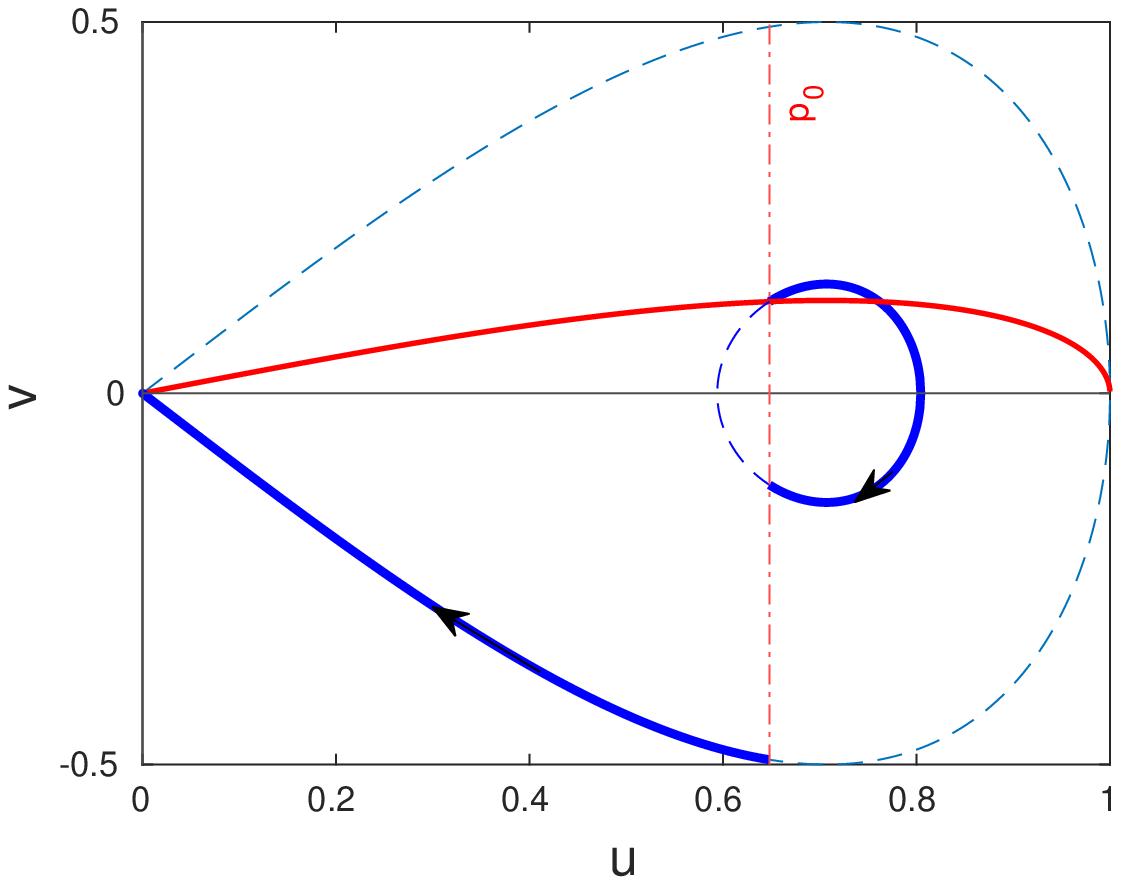}
   \caption{Geometric construction of the positive single-lobe symmetric state on the plane for $N=2$
   with $a = 0.7$ (left) and $a = 1$ (right).}
   \label{fig-plane}
\end{figure}

We shall make this geometric picture rigorous by using analytical tools of
the period function (see, e.g., \cite{Vill1}). We define two period functions for a given $(p_0,q_0)$:
\begin{equation}
\label{period}
T_+(p_0,q_0) := \int_{p_0}^{p_+} \frac{du}{\sqrt{E + A(u)}}, \quad
T_-(p_0,q_0) := \int_{p_-}^{p_0} \frac{du}{\sqrt{E + A(u)}},
\end{equation}
where the fixed value $E$ and the turning points $p_+$ and $p_-$ are defined
from $(p_0,q_0)$ by
\begin{equation}
\label{p+p-}
E = q_0^2 - A(p_0) = -A(p_+) = -A(p_-).
\end{equation}
For each level curve of $E(u,v) = E$ inside the homoclinic loop on Figure \ref{fig-plane}, we can order the turning points as follows: 
\begin{equation}
\label{turning-points}
0 < p_- < p_* < p_+ < 1.
\end{equation}
It is obvious that $u_1$ is a positive single-lobe solution 
of the boundary-value problem (\ref{bvp-i}) if and only if $p_0$ 
is a root of the nonlinear equation:
\begin{equation}
\label{root-i}
\mathcal{T}(p_0) = \pi \epsilon, \quad \mbox{\rm where   } \mathcal{T}(p_0) := T_+\left( p_0,\frac{1}{2N} \sqrt{A(p_0)}\right).
\end{equation}
Since $\mathcal{T}(p_0)$ is uniquely defined by $p_0 \in (0,1)$, the nonlinear equation (\ref{root-i})
defines a unique mapping $(0,1) \ni p_0 \mapsto \epsilon(p_0) := \frac{1}{\pi} \mathcal{T}(p_0) \in (0,\infty)$. Monotonicity of this mapping is shown next.

\subsection{Monotonicity of the period function}

It follows that $u = p_*$ is a double root of $A(u) - A(p_*)$ since $A'(p_*) = 0$ and $A''(p_*) \neq 0$,
where we can use the explicit computations of $A'(u) = 2u (1 - 2 u^{2})$ and $A''(u) = 2 (1 - 6 u^{2})$.

Recall that if $W(u,v)$ is a $C^1$ function in an open region of $\mathbb{R}^2$, then the differential of $W$ is defined by 
$$
dW(u,v) = \frac{\partial W}{\partial u} du + \frac{\partial W}{\partial v} dv
$$
and the line integral of $d W(u,v)$ along any $C^1$ contour $\gamma$ connecting $(u_0,v_0)$ and $(u_1,v_1)$ does not depend on $\gamma$ and is evaluated as 
$$
\int_{\gamma} d W(u,v) = W(u_1,v_1) - W(u_0,v_0).
$$
At the level curve of $E(u,v) = v^2 - A(u) = E$, we can write
\begin{eqnarray*}
d \left[ \frac{2 (A(u) - A(p_*)) v}{A'(u)} \right] =
\left[ 2 - \frac{2 (A(u) - A(p_*)) A''(u)}{[A'(u)]^2} \right] v du +  \frac{2 (A(u) - A(p_*))}{A'(u)} dv.
\end{eqnarray*}
where the quotients are not singular for every $u > 0$. 
Since $2 v dv = A'(u) du$ on the level curve $E(u,v) = E$, 
the previous representation allows us to express
\begin{equation}
\frac{A(u) - A(p_*)}{v} du = -\left[ 2 - \frac{2 (A(u) - A(p_*)) A''(u)}{[A'(u)]^2} \right] v du +
d \left[ \frac{2 (A(u) - A(p_*)) v}{A'(u)} \right].
\label{integration}
\end{equation}
The following lemma justifies monotonicity of the mapping $(0,1) \ni p_0 \mapsto \mathcal{T}(p_0) \in (0,\infty)$.

\begin{lemma}
\label{global-decreasing}
The function $p_0 \mapsto \mathcal{T}(p_0)$ is $C^1$ and monotonically decreasing for every $p_0 \in (0,1)$.
\end{lemma}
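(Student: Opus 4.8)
The plan is to convert the geometric picture into a single scalar identity for $\mathcal{T}(p_0)$ by integrating the exact differential (\ref{integration}), and then to differentiate that identity. Writing $v=\sqrt{E+A(u)}>0$ on the branch connecting $(p_0,q_0)$ to the turning point $p_+$, I would first record the explicit factorizations that make (\ref{integration}) usable for $A(u)=u^2(1-u^2)$: one has $A(u)-A(p_*)=-(u^2-\tfrac12)^2$ and $A'(u)=-4u(u^2-\tfrac12)$, so the bracket in (\ref{integration}) collapses to $2-\frac{2(A(u)-A(p_*))A''(u)}{[A'(u)]^2}=\frac{2u^2+1}{4u^2}$ while the exact-differential term becomes $\frac{2(A(u)-A(p_*))v}{A'(u)}=\frac{(u^2-\tfrac12)v}{2u}$ (both non-singular at $p_*$). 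Using $\frac{A(u)-A(p_*)}{v}=v-\frac{E+A(p_*)}{v}$ on the level curve and integrating (\ref{integration}) from $p_0$ to $p_+$, where $v(p_+)=0$ kills the boundary term except at $p_0$, I obtain the closed formula
\begin{equation*}
(E+A(p_*))\,\mathcal{T}(p_0)=\int_{p_0}^{p_+}\frac{6u^2+1}{4u^2}\,v\,\mathrm{d}u+\frac{(p_0^2-\tfrac12)\,q_0}{2p_0}.
\end{equation*}
Since $E+A(p_*)=A(p_*)-A(p_+)>0$ by (\ref{E-positivity}) and the integrand $\frac{6u^2+1}{4u^2}v$ is bounded and vanishes at $p_+$, the right-hand side is manifestly $C^1$ in $p_0$, which already yields the claimed $C^1$ regularity of $\mathcal{T}$.

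For monotonicity I would differentiate this formula in $p_0$. The endpoint $u=p_+$ contributes nothing because $v(p_+)=0$ (so no $p_+'$ ever appears); the lower endpoint contributes $-\frac{6p_0^2+1}{4p_0^2}q_0$; and since $\partial_{p_0}v=\frac{\partial_{p_0}E}{2v}$ with $\partial_{p_0}E=(\tfrac{1}{4N^2}-1)A'(p_0)$, only an \emph{integrable} $v^{-1}$ singularity is reintroduced (never $v^{-3}$), so differentiation under the integral is legitimate. After the algebra — in which the non-integral contributions collapse, after a short computation, to the single clean term $-\frac{1}{16N^2 q_0}$ — I expect to reach
\begin{equation*}
(E+A(p_*))\,\mathcal{T}'(p_0)=-\frac{1}{16N^2\,q_0}+\frac{A'(p_0)}{8}\Big(1-\tfrac{1}{4N^2}\Big)\int_{p_0}^{p_+}\frac{2u^2-1}{u^2\,v}\,\mathrm{d}u .
\end{equation*}

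The sign is then settled by splitting at $p_*=\tfrac{1}{\sqrt2}$ and reading the surviving integral in two different ways. For $p_0\in[p_*,1)$ the whole interval $[p_0,p_+]$ lies to the right of $p_*$, so $2u^2-1>0$ makes the integral positive while $A'(p_0)=-4p_0(p_0^2-\tfrac12)\le 0$; hence both terms are $\le 0$ and $\mathcal{T}'(p_0)<0$. For $p_0\in(0,p_*]$ the factor $A'(p_0)\ge 0$ now has the wrong sign, so instead I would integrate $\int_{p_0}^{p_+}\frac{2u^2-1}{u^2 v}\,\mathrm{d}u$ by parts using $\frac{A'(u)}{v}\,\mathrm{d}u=2\,\mathrm{d}v$, which rewrites it as $\frac{q_0}{p_0^3}-3\int_{p_0}^{p_+} v\,u^{-4}\,\mathrm{d}u$. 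Substituting and using $q_0^2=(1-\tfrac{1}{4N^2})p_0^2(1-p_0^2)$ turns the non-integral part into $\frac{1}{16N^2 q_0}\big[(1-\tfrac{1}{4N^2})(1-2p_0^2)(1-p_0^2)-1\big]$, whose bracket is negative because $(1-2p_0^2)(1-p_0^2)\le 1$ on $(0,p_*]$, while the remaining term $-\tfrac34\,p_0(1-2p_0^2)(1-\tfrac{1}{4N^2})\int_{p_0}^{p_+} v\,u^{-4}\,\mathrm{d}u$ is also $\le 0$ since $1-2p_0^2\ge 0$ there. In either regime the right-hand side is strictly negative, and dividing by $E+A(p_*)>0$ gives $\mathcal{T}'(p_0)<0$ for all $p_0\in(0,1)$.

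The main obstacle I anticipate is not the algebra but two organizational points. The first is arranging the differentiation so that no non-integrable $v^{-3}$ term ever arises; this is exactly what the integrated form of (\ref{integration}) buys, trading the naive singular endpoint computation for a regular one. The second, and decisive, difficulty is that no single integration-by-parts form of the reduced integral is uniformly signed across $(0,1)$: the crucial idea is the case split at $p_*$, keeping the raw integral for $p_0\ge p_*$ and its integrated-by-parts counterpart for $p_0\le p_*$, so that in each regime the expression presents two manifestly nonpositive contributions.
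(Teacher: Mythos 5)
Your proposal is correct and follows essentially the same route as the paper's proof: both integrate the exact differential (\ref{integration}) along the level curve to get a non-singular representation of $\left[E_0(p_0)+A(p_*)\right]\mathcal{T}(p_0)$, differentiate it (your non-integral term $-\frac{1}{16N^2q_0}$ and integral term $\frac{A'(p_0)}{8}\bigl(1-\frac{1}{4N^2}\bigr)\int_{p_0}^{p_+}\frac{2u^2-1}{u^2v}\,du$ coincide exactly with the paper's $-\frac{A(p_*)}{4N^2q_0}$ and $-\frac{A'(p_0)}{8}\bigl(1-\frac{1}{4N^2}\bigr)\int_{p_0}^{p_+}\frac{1-2u^2}{u^2v}\,du$), and then split at $p_*$, keeping the raw integral on $[p_*,1)$ and its integrated-by-parts form on $(0,p_*]$. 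The only slip is the stated identity $q_0^2=(1-\frac{1}{4N^2})p_0^2(1-p_0^2)$, which should read $q_0^2=\frac{1}{4N^2}p_0^2(1-p_0^2)$ (you have confused $q_0^2$ with $-E_0(p_0)$); the bracketed expression $\frac{1}{16N^2q_0}\bigl[(1-\frac{1}{4N^2})(1-2p_0^2)(1-p_0^2)-1\bigr]$ that you derive is nevertheless the one obtained from the correct identity, and it agrees with the paper's $-\frac{(3-2p_0^2)p_0^2}{16N^2q_0}-\frac{A'(p_0)q_0}{32N^2p_0^3}$.
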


\begin{proof}
Since $q_0 = \frac{1}{2N} \sqrt{A(p_0)}$ in (\ref{root-i})
for a given $p_0 \in (0,1)$, the value of $\mathcal{T}(p_0)$
is obtained from the level curve $E(u,v) = E_0(p_0)$, where
\begin{equation}
\label{energy-level}
E_0(p_0) := q_0^2 - A(p_0) = -\left( 1-\frac{1}{4N^2} \right) A(p_0).
\end{equation}
For every $p_0 \in (0,1)$, we use the formula (\ref{integration}) to get
\begin{eqnarray*}
\left[ E_0(p_0) + A(p_*) \right] \mathcal{T}(p_0) & = & \int_{p_0}^{p_+} \left[ v - \frac{A(u)-A(p_*)}{v} \right] du \\
& = & \int_{p_0}^{p_+} \left[3 - \frac{2 (A(u) - A(p_*)) A''(u)}{[A'(u)]^2} \right] v du +
\frac{2(A(p_0)-A(p_*))q_0}{A'(p_0)},
\end{eqnarray*}
where we have used that $v = 0$ at $u = p_+$ and $v = q_0$ at $u = p_0$.
Because the integrands are free of singularities and $E_0(p_0) + A(p_*) > 0$ due to (\ref{E-positivity}),
the mapping $(0,1) \ni p_0 \mapsto \mathcal{T}(p_0) \in (0,\infty)$ is $C^1$. We
only need to prove that $\mathcal{T}'(p_0) < 0$ for every $p_0 \in (0,1)$.

Differentiating the previous expression with respect to $p_0$ yields
\begin{eqnarray*}
\left[ E_0(p_0) + A(p_*) \right] \mathcal{T}'(p_0) =
-\frac{A(p_*)}{4N^2q_0} - \frac{A'(p_0)}{2}\left( 1-\frac{1}{4N^2} \right)
 \int_{p_0}^{p_+} \left[1 - \frac{2 (A(u) - A(p_*)) A''(u)}{[A'(u)]^2} \right] \frac{du}{v},
\end{eqnarray*}
where we have used 
$$
E_0'(p_0) = -\left( 1-\frac{1}{4N^2} \right) A'(p_0), \quad q_0'(p_0) = \frac{1}{8N^2 q_0} A'(p_0), \quad \frac{\partial v}{\partial p_0}  = \frac{1}{2v} E_0'(p_0).
$$
The formula for $\mathcal{T}'(p_0)$ can be simplified using $A(u) = u^2 (1-u^2)$ to the form:
\begin{eqnarray}
\label{simplified-form}
\left[ E_0(p_0) + A(p_*) \right] \mathcal{T}'(p_0) = -\frac{A(p_*)}{4N^2q_0} - \frac{A'(p_0)}{8}\left( 1-\frac{1}{4N^2} \right)
 \int_{p_0}^{p_+} \frac{1 - 2u^2}{u^2v} du.
\end{eqnarray}
Since $A'(p_0) <0$ for any $p_0\in (p_*, 1)$ and $1 - 2 u^2 < 0$ for $u \in [p_0,p_+] \subset (p_*,1)$, we get that
$\mathcal{T}'(p_0) < 0$ for any $p_0 \in (p_*, 1)$.
Similarly, since $A'(p_*) = 0$, we also have $\mathcal{T}'(p_*) < 0$.

If $p_0 \in (0, p_*)$ we use $A'(u) = 2u(1-2u^2)$ and proceed with integration by parts to get
\begin{eqnarray}
\label{good-int-by-parts}
\int_{p_0}^{p_+} \frac{1 - 2u^2}{u^2v} du & = &
\int_{p_0}^{p_+} \frac{A'(u)}{2u^3 \sqrt{E_0(p_0) + A(u)}} du \\
\nonumber
&= & - \frac{\sqrt{E_0(p_0) + A(p_0)}}{p_0^3} + 3\int_{p_0}^{p_+} \frac{\sqrt{E_0(p_0) + A(u)}}{u^4} du \\
&= & -\frac{q_0}{p_0^3} + 3\int_{p_0}^{p_+} \frac{v}{u^4} du.
\nonumber
\end{eqnarray}
Substituting this into (\ref{simplified-form}) yields
\begin{eqnarray*}
\left[ E_0(p_0) + A(p_*) \right] \mathcal{T}'(p_0) & =&
 -\frac{A(p_*)}{4N^2q_0} - \frac{A'(p_0)}{8}\left( 1-\frac{1}{4N^2} \right)
 \int_{p_0}^{p_+} \frac{1 - 2u^2}{u^2v} du \\
& = & -\frac{A(p_*)}{4N^2q_0} + \frac{A'(p_0)}{8}\left( 1-\frac{1}{4N^2} \right) \frac{q_0}{p_0^3}
-\frac{3A'(p_0)}{8}\left( 1-\frac{1}{4N^2} \right)
\int_{p_0}^{p_+} \frac{v}{u^4} du.
\end{eqnarray*}
The last term is negative since $A'(p_0)>0$ for every $p_0 \in (0, p_*)$. To evaluate the first two terms
we use that $A(p_*) = \frac{1}{4}$, $A'(p_0) = 2p_0 (1 - 2 p_0^2)$, and $4 N^2 q_0^2 = p_0^2 - p_0^4$ so that we get
\begin{eqnarray*}
-\frac{A(p_*)}{4N^2q_0} + \frac{A'(p_0)}{8}\left( 1-\frac{1}{4N^2} \right) \frac{q_0}{p_0^3} & = &
 -\frac{1}{16N^2q_0} + \frac{(1-2p_0^2)q_0}{4p_0^2} -
\frac{A'(p_0)q_0}{32N^2 p_0^3} \\
& = & - \frac{(3-2p_0^2)p_0^2}{16N^2q_0} - \frac{A'(p_0)q_0}{32N^2 p_0^3},
\end{eqnarray*}
which is negative for every $p_0 \in (0, p_*)$.
As a result of the above calculations, for every $p_0 \in (0, p_*)$
we have $\mathcal{T}'(p_0) <0$.
\end{proof}

\subsection{Monotonicity of the mass of the symmetric state}

By construction of the symmetric state $\Phi$, we compute the mass $\mu(\omega) := Q(\Phi(\cdot,\omega))$
in the form
\begin{eqnarray*}
\mu = N \int_{-\pi}^{\pi} \phi_1^2 dx + \int_0^{\infty} \phi_0^2 dx.
\end{eqnarray*}
Due to the scaling transformation (\ref{scaling-transform}), the explicit solution
on the half-line (\ref{soliton}), and the first-order invariant (\ref{invariant}),
the mass integral can be rewritten as follows:
\begin{equation}
\label{mass-explicit}
\mu = 2N \epsilon \int_{p_0}^{p_+} \frac{u^2 du}{\sqrt{E + A(u)}} + \epsilon \left( 1-\sqrt{1-p_0^2} \right).
\end{equation}
where $p_0 \in (0,1)$ is the same parameter as in (\ref{root-i}), $E(u,v) = E_0(p_0)$ is fixed at the energy level
(\ref{energy-level}), $A(u) = u^2 - u^4$, and we have used $\tanh(a) = \sqrt{1 - p_0^2}$ that follows from
${\rm sech}(a) = p_0$ with $a > 0$.

Using $\mathcal{T}(p_0) = \pi \epsilon$ in (\ref{root-i}), we rewrite (\ref{mass-explicit}) as
\begin{equation}
\label{mass-pi}
\mathcal{M}(p_0) := \pi \mu = \mathcal{T} (p_0) \left[ 2N \int_{p_0}^{p_+} \frac{u^2 du}{\sqrt{E+A(u)}}
+ \left( 1-\sqrt{1-p_0^2} \right) \right].
\end{equation}

Recall that $\omega = -\epsilon^2$ and that the function $p_0 \mapsto \mathcal{T}(p_0)$ is $C^1$ and monotonically decreasing for every $p_0 \in (0,1)$ by Lemma \ref{global-decreasing}. 
The following lemma gives monotonicity
of the mapping $(0,1) \ni p_0 \mapsto \mathcal{M}(p_0) \in (0,\infty)$.

\begin{lemma}
\label{lemma-mass-monotonicity}
The function $p_0 \mapsto \mathcal{M}(p_0)$ is $C^1$ and monotonically decreasing for every $p_0 \in (0,1)$.
\end{lemma}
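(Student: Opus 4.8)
The plan is to exploit the product structure $\mathcal{M}(p_0) = \mathcal{T}(p_0)\,\mathcal{S}(p_0)$, where
\[
\mathcal{S}(p_0) := 2N \int_{p_0}^{p_+} \frac{u^2\,du}{\sqrt{E+A(u)}} + \left(1 - \sqrt{1-p_0^2}\right)
\]
is the bracket in (\ref{mass-pi}), and to establish separately that $\mathcal{M}$ is $C^1$ and that $\mathcal{M}'(p_0)<0$. For the $C^1$ claim, $\mathcal{T}$ is already $C^1$ by Lemma \ref{global-decreasing} and the half-line term is smooth on $(0,1)$, so the only issue is the loop integral $I(p_0) := \int_{p_0}^{p_+} u^2/\sqrt{E+A(u)}\,du$, whose integrand blows up at the turning point $p_+$. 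I would regularize it by the exact-differential device already used in (\ref{integration}): integrating $d\!\left[-\tfrac{v}{2u}\right]$ along the level curve $E(u,v)=E_0(p_0)$ and using $2v\,dv = A'(u)\,du$ together with $A'(u)/(2u)=1-2u^2$ yields the non-singular representation
\[
I(p_0) = \tfrac12 \mathcal{T}(p_0) + \frac{q_0}{2p_0} - \tfrac12 J(p_0), \qquad J(p_0) := \int_{p_0}^{p_+} \frac{\sqrt{E+A(u)}}{u^2}\,du ,
\]
in which $J$ has a bounded integrand (recall $p_0>0$ and $v\to 0$ at $p_+$). Since $q_0/p_0 = \tfrac{1}{2N}\sqrt{1-p_0^2}$ from (\ref{root-i}), this collapses to $\mathcal{S}(p_0) = N\mathcal{T}(p_0) + 1 - \tfrac12\sqrt{1-p_0^2} - N J(p_0)$, which is manifestly $C^1$, and hence so is $\mathcal{M}$.

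For the monotonicity, the plan is the product rule $\mathcal{M}' = \mathcal{T}'\mathcal{S} + \mathcal{T}\mathcal{S}'$. Here $\mathcal{T}>0$ and $\mathcal{S}>0$ (all three terms in $\mathcal{S}$ are positive since $\sqrt{1-p_0^2}<1$), and $\mathcal{T}'<0$ by Lemma \ref{global-decreasing}, so the first term is strictly negative and it suffices to control $\mathcal{S}'$. Using the regularized formula, $\mathcal{S}' = N\mathcal{T}' + \tfrac{p_0}{2\sqrt{1-p_0^2}} - N J'$, and I would compute $J'$ by the same exact-form trick: naive differentiation under the integral sign produces a non-integrable $v^{-3}$ singularity at $p_+$ coming from $\partial_{p_0}(E+A(u))^{-1/2}$, which the exact differential converts into a bounded integrand plus boundary data at $p_0$. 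I then expect to follow the two-case scheme of the proof of Lemma \ref{global-decreasing}, treating $p_0\in(p_*,1)$ and $p_0\in(0,p_*)$ separately and integrating by parts as in (\ref{good-int-by-parts}) to reduce everything to sign-definite integrals of the form $\int_{p_0}^{p_+} v/u^4\,du$ and elementary rational expressions in $p_0$, $q_0$, $N$.

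The main obstacle is that the half-line mass $1-\sqrt{1-p_0^2}$ is \emph{increasing} in $p_0$, contributing the positive term $\tfrac{p_0}{2\sqrt{1-p_0^2}}$ to $\mathcal{S}'$, so one cannot argue that $\mathcal{S}$ is monotone term by term; moreover, as $p_0\to 0^+$ both $\mathcal{T}$ and $J$ diverge while their combination stays finite, so the decomposition above cannot be estimated piecewise near the endpoint. The essential point will therefore be to show that the decrease of the loop-mass term dominates, for which the strictly negative contribution $\mathcal{T}'\mathcal{S}$ (guaranteed by Lemma \ref{global-decreasing}) must be kept and balanced against the positive half-line contribution rather than discarded. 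As a consistency check on the sign, the representation gives the boundary values $\mathcal{M}(p_0)\to\infty$ as $p_0\to 0^+$ and $\mathcal{M}(p_0)\to 0$ as $p_0\to 1^-$, matching the limits $\mu\to\infty$ as $\omega\to-\infty$ and $\mu\to 0$ as $\omega\to 0$ asserted in Theorem \ref{global-existence}.
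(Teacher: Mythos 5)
Your $C^1$ argument is sound and in fact uses a cleaner regularization than the paper's: the exact differential $d[-v/(2u)]$ together with $2v\,dv = A'(u)\,du$ and $A'(u)/(2u)=1-2u^2$ does give $\int_{p_0}^{p_+} u^2 v^{-1}\,du = \tfrac12\mathcal{T}(p_0) + q_0/(2p_0) - \tfrac12 J(p_0)$ with $J(p_0)=\int_{p_0}^{p_+} v\,u^{-2}\,du$, whereas the paper regularizes $\mathcal{B}(p_0) := \int_{p_0}^{p_+} u^2 v^{-1}\,du$ by integrating $d\bigl[2(A(u)-A(p_*))u^2 v / A'(u)\bigr]$ and dividing by $E_0(p_0)+A(p_*)$. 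Both routes work; yours trades the paper's heavier exact form for the need to verify separately that $J$ is $C^1$ (its $p_0$-derivative reintroduces the weakly singular integral $\int_{p_0}^{p_+} u^{-2}v^{-1}\,du$, whose continuity in $p_0$ requires the same kind of argument as for $\mathcal{T}$ itself), a point you gloss over but which is repairable.

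The monotonicity half, however, is a plan rather than a proof, and it is missing the one genuinely new ingredient that the paper needs. For $p_0\in(p_*,1)$ your scheme does go through: after the integration by parts $\int_{p_0}^{p_+}(2u^2-1)v^{-1}\,du = q_0/p_0 - \int_{p_0}^{p_+} v\,u^{-2}\,du$, every term acquires a definite sign, exactly as in the paper. But for $p_0\in(0,p_*)$ the sign-indefinite pieces of $\mathcal{T}'$ and of the derivative of the loop integral do \emph{not} individually become sign-definite after integration by parts; they must be combined, and the paper shows that the relevant combination reduces, via the identities $\int_{p_0}^{p_+}(2u^2-1)v^{-1}\,du=2\mathcal{B}-\mathcal{T}$ and $\int_{p_0}^{p_+}(1-2u^2)u^{-2}v^{-1}\,du=\int_{p_0}^{p_+}u^{-2}v^{-1}\,du-2\mathcal{T}$, to the quantity $\mathcal{B}(p_0)\int_{p_0}^{p_+}u^{-2}v^{-1}\,du - \mathcal{T}(p_0)^2$, whose nonnegativity is precisely the Cauchy--Schwarz inequality applied to $u/\sqrt{v}$ and $1/(u\sqrt{v})$. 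Your stated expectation that everything reduces to ``sign-definite integrals of the form $\int v/u^4\,du$ and elementary rational expressions'' will not be realized in this region: without the Cauchy--Schwarz step (or an equivalent product-of-integrals inequality) the argument does not close. You correctly diagnose that $\mathcal{T}'\mathcal{S}$ must be retained to absorb the positive half-line contribution $p_0/(2\sqrt{1-p_0^2})$, but the actual mechanism of absorption on $(0,p_*)$ is this inequality, which your proposal neither names nor derives.
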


\begin{proof}
We denote $\mathcal{B}(p_0) := \int_{p_0}^{p_+} \frac{u^2 du}{\sqrt{E+A(u)}}$ and prove that
the mapping $(0,1) \ni p_0 \mapsto \mathcal{B}(p_0) \in (0,\infty)$ is $C^1$. At the level curve $E(u,v) = E_0(p_0)$, we can write
\begin{eqnarray*}
d \left[ \frac{2 (A(u) - A(p_*)) u^2 v}{A'(u)} \right] =
2 \left[ 1 + \frac{2 (1+2u^2) (A(u) - A(p_*))}{[A'(u)]^2} \right] u^2 v du +  \frac{2 (A(u) - A(p_*))}{A'(u)} u^2 dv.
\end{eqnarray*}
where the relations $A'(u) = 2u(1-2u^2)$ and $A''(u) = 2(1-6u^2)$ have been used.
Since $2v dv = A'(u) du$ along the level curve $E(u,v) = E_0(p_0)$, we obtain
\begin{eqnarray}
\nonumber
\frac{(A(u) - A(p_*))}{v} u^2 du & = &
d \left[ \frac{2 (A(u) - A(p_*)) u^2 v}{A'(u)} \right] \\
& \phantom{t} & - 2 \left[ 1 +
\frac{2(1+2u^2) (A(u) - A(p_*))}{[A'(u)]^2} \right] u^2 v du,
\label{integration-notes}
\end{eqnarray}
where the quotients are not singular for every $u > 0$. For every $p_0 \in (0,1)$
we use the formula (\ref{integration-notes}) to write
\begin{eqnarray*}
\left[ E_0(p_0) + A(p_*) \right] \mathcal{B}(p_0) & = & \int_{p_0}^{p_+} \left[ vu^2 - \frac{(A(u)-A(p_*))u^2}{v} \right] du \\
& = & \int_{p_0}^{p_+} \left[3 +
\frac{4 (1+2u^2) (A(u) - A(p_*))}{[A'(u)]^2} \right] u^2 v du  +\frac{2(A(p_0)-A(p_*))p_0^2 q_0}{A'(p_0)},
\end{eqnarray*}
where we have used that $v = 0$ at $u = p_+$ and $v = q_0$ at $u = p_0$. Because the integrands are free of singularities and
$E_0(p_0) + A(p_*) > 0$ due to (\ref{E-positivity}), the mapping $(0,1) \ni p_0 \mapsto \mathcal{B}(p_0) \in (0,\infty)$ is $C^1$.
Hence, the mapping $(0,1) \ni p_0 \mapsto \mathcal{M}(p_0) \in (0,\infty)$ is $C^1$.
It remains to prove that $\mathcal{M}'(p_0)<0$ for every $p_0 \in (0,1)$.

We differentiate (\ref{mass-pi}) with respect to $p_0$:
\begin{equation}
\label{mass-pi-2}
\mathcal{M}'(p_0) = \mathcal{T}'(p_0) \left[ 2 N \mathcal{B}(p_0) +\left( 1-\sqrt{1-p_0^2} \right) \right]
+ 2N \mathcal{T}(p_0) \mathcal{B}'(p_0) + \mathcal{T}(p_0) \frac{p_0}{\sqrt{1-p_0^2}}.
\end{equation}
It follows from the proof of Lemma \ref{global-decreasing} that $\mathcal{T}'(p_0) < 0$, where
\begin{equation}
\label{period-d}
\left[ E_0(p_0) + A(p_*) \right] \mathcal{T}'(p_0) =
-\frac{A(p_*)}{4N^2q_0} - \frac{A'(p_0)}{8}\left( 1-\frac{1}{4N^2} \right)
 \int_{p_0}^{p_+} \frac{1 - 2u^2}{u^2v} du.
\end{equation}
Similarly, differentiating the expression for $\mathcal{B}(p_0)$ yields the following expression:
\begin{eqnarray}
\left[ E_0(p_0) + A(p_*) \right] \mathcal{B}'(p_0) =
-\frac{A(p_*)p_0^2}{4N^2q_0} + \frac{A'(p_0)}{8}\left( 1-\frac{1}{4N^2} \right)
 \int_{p_0}^{p_+} \frac{1 - 2u^2}{v} du.
\label{d-B}
\end{eqnarray}
The first term in the right-hand side of (\ref{mass-pi-2}) is always negative, whereas the third term is always positive.
The second term can be of either sign depending on the value of $p_0 \in (0,1)$. In order to prove that
$\mathcal{M}'(p_0) < 0$ for every $p_0 \in (0,1)$, we shall balance the positive terms in the right-hand side of (\ref{mass-pi-2})
with the negative terms.

We combine the second and third terms in the right-hand side of (\ref{mass-pi-2}) after multiplication
by $(E_0(p_0)+A(p_*))$ and obtain:
\begin{eqnarray*}
I & := & (E_0(p_0)+A(p_*)) \left[ 2N \mathcal{T}(p_0) \mathcal{B}'(p_0) + \mathcal{T}(p_0) \frac{p_0}{\sqrt{1-p_0^2}} \right] \\
& = & 2N \mathcal{T}(p_0) \left[ -\frac{A(p_*)p_0^2}{4N^2q_0} + \frac{A'(p_0)}{8}\left( 1-\frac{1}{4N^2} \right)
 \int_{p_0}^{p_+} \frac{1 - 2u^2}{v} du \right] \\
 && \qquad \qquad \qquad + \mathcal{T}(p_0) \frac{p_0}{\sqrt{1-p_0^2}} (E_0(p_0) + A(p_*)) \\
& = &  - \left(1 - \frac{1}{4N^2} \right) \mathcal{T}(p_0) \left[
 \frac{p_0 A(p_0)}{\sqrt{1-p_0^2}}  + \frac{N A'(p_0)}{4}
 \int_{p_0}^{p_+} \frac{2u^2 - 1}{v} du \right]
\end{eqnarray*}
where we have used the relations $E_0(p_0) = -\left(1 - \frac{1}{4N^2} \right) A(p_0)$ and $q_0 = \frac{1}{2N} \sqrt{A(p_0)}$.
The first term in $I$ is already negative, however, the second term in $I$ is sign-indefinite.

For $p_0 \in (p_*,1)$, the second term in $I$ is positive because $A'(p_0) < 0$  and $2u^2 - 1 > 0$ for $u \in [p_0,p_+]$.
Using the integration by parts, we write
$$
\int_{p_0}^{p_+} \frac{2u^2-1}{v} du = -\int_{p_0}^{p_+} \frac{A'(u) du}{2u \sqrt{E + A(u)}} =
\frac{q_0}{p_0} -
\int_{p_0}^{p_+} \frac{v}{u^2} du.
$$
Substituting this expression into the expression for $I$ yields
\begin{eqnarray*}
I &=& - \left(1 - \frac{1}{4N^2} \right) \mathcal{T}(p_0) \left[
 \frac{p_0 (1 + 2 p_0^2) \sqrt{1-p_0^2}}{4}  - \frac{N A'(p_0)}{4}
 \int_{p_0}^{p_+} \frac{v}{u^2} du \right]
\end{eqnarray*}
which is negative since $A'(p_0) \leq 0$ for $p_0 \in [p_*,1)$.
Hence, $\mathcal{M}'(p_0) < 0$ for $p_0 \in [p_*,1)$.

For $p_0 \in (0,p_*)$, we have $A'(p_0) > 0$ but $2u^2 - 1$ is sign-indefinite for $u \in [p_0,p_+]$.
We combine the second term in $I$ and the second term in the right-hand side
of (\ref{period-d}), which appears in the first term of the right-hand side of (\ref{mass-pi-2}) after multiplication
by $(E_0(p_0)+A(p_*))$. All other terms
in the right-hand side of (\ref{mass-pi-2}) are negative. Hence, we consider
\begin{eqnarray*}
II & :=& - \frac{N A'(p_0)}{4}\left( 1-\frac{1}{4N^2} \right)
\left[ \mathcal{T}(p_0) \int_{p_0}^{p_+} \frac{2u^2-1}{v} du +
\mathcal{B}(p_0) \int_{p_0}^{p_+} \frac{1-2u^2}{u^2 v} du \right] \\
& = & - \frac{N A'(p_0)}{4}\left( 1-\frac{1}{4N^2} \right)
\left[ \left( \int_{p_0}^{p_+} \frac{u^2 du}{v} \right)
\left( \int_{p_0}^{p_+} \frac{du}{u^2v} \right) -
\left( \int_{p_0}^{p_+} \frac{du}{v} \right)^2 \right],
\end{eqnarray*}
where $A'(p_0) > 0$ if $p_0 \in (0,p_0)$. Thanks to the Cauchy--Schwarz inequality
$$
\int_{p_0}^{p_+} \frac{du}{v} = \int_{p_0}^{p_+} \frac{u}{\sqrt{v}} \frac{du}{u \sqrt{v}} \leq
\left( \int_{p_0}^{p_+} \frac{u^2 du}{v} \right)^{1/2}
\left( \int_{p_0}^{p_+} \frac{du}{u^2v} \right)^{1/2},
$$
the expression in $II$ is negative. Hence, $\mathcal{M}'(p_0) < 0$ for $p_0 \in (0,p_*)$.
\end{proof}

\subsection{Proof of Theorem \ref{global-existence}}

By monotonicity of the period function $\mathcal{T}(p_0)$ in $p_0$ given by Lemma \ref{global-decreasing}
and by the nonlinear equation (\ref{root-i}),
we have a diffeomorphism $(0,1) \ni p_0 \mapsto \epsilon(p_0) = \frac{1}{\pi} \mathcal{T}(p_0) \in (0,\infty)$.
Let us show that $\epsilon(p_0) \to 0$ as $p_0 \to 1$ and $\epsilon(p_0) \to \infty$ as $p_0 \to 0$.
Then, since the function $\mathcal{T}(p_0)$ is monotonically decreasing, the range
of the mapping $p_0 \mapsto \epsilon(p_0)$ is indeed $(0,\infty)$.

Recall (\ref{p+p-}) with $E(u,v) = E_0(p_0)$ given by (\ref{energy-level}). 
Also recall the ordering given by (\ref{turning-points}). For a given 
$p_0 \in (0,1)$, the equation $E_0(p_0) = -A(p_+)$ determines $p_+$ 
from the nonlinear equation
\begin{equation}
\label{p+p0}
p_+^2 (1- p_+^2) = \left(1 - \frac{1}{4 N^2} \right) p_0^2 (1 - p_0^2).
\end{equation}
Since $p_+ \in (p_0,1)$, it follows from (\ref{p+p0}) 
that $p_+ \to 1$ as $p_0 \to 1$
so that $|p_+ - p_0| \to 0$ as $p_0 \to 1$. Since the weakly singular integrand below
is integrable, we have
\begin{equation}
\label{limit-integral}
\mathcal{T}(p_0) = \int_{p_0}^{p_+} \frac{du}{\sqrt{E + A(u)}} =
\int_{p_0}^{p_+} \frac{du}{\sqrt{A(u) - A(p_+)}} \to 0 \quad \mbox{\rm as} \;\; p_0 \to 1,
\end{equation}
hence $\epsilon(p_0) = \frac{1}{\pi} \mathcal{T}(p_0) \to 0$ as $p_0 \to 1$. For every $0<p_0<p_+<1$ we obtain
\begin{equation}
\label{divergent-integral}
\mathcal{T}(p_0) =
\int_{p_0}^{p_+} \frac{du}{\sqrt{A(u) - A(p_+)}} \geq
\int_{p_0}^{p_+} \frac{du}{u \sqrt{1-u^2}}.
\end{equation}
Since $p_+ \in (p_*,1)$, it follows from (\ref{p+p0}) 
that $p_+ \to 1$ as $p_0 \to 0$. Since 
$$
\int_0^1 \frac{du}{u \sqrt{1 - u^2}} = \infty,
$$
we have $\mathcal{T}(p_0) \to \infty$ as $p_0 \to 0$, hence $\epsilon(p_0) = \frac{1}{\pi} \mathcal{T}(p_0) \to \infty$ as $p_0 \to 0$.

Thus, for each $p_0 = {\rm sech}(a) \in (0,1)$ or equivalently, for each $a \in (0,\infty)$,
there exists exactly one root $\epsilon \in (0,\infty)$ of the nonlinear equation (\ref{root-i}).
By using $\omega = -\epsilon^2$, the scaling transformation (\ref{scaling-transform}),
the soliton (\ref{soliton}), and the symmetry (\ref{sym-state-scaled}),
we obtain a unique solution $\Phi \in H^2_{\rm NK}(\Gamma_N)$ satisfying the stationary NLS equation
(\ref{nls-stat}), which is symmetric on each loop parameterized
by $[-\pi,\pi]$ and is monotonically decreasing on $[0,\pi]$ and $[0,\infty)$.
Moreover, by Lemma \ref{global-decreasing} and by the construction,
the map $(-\infty,0) \ni \omega \mapsto \Phi(\cdot,\omega) \in H^2_{\rm NK}(\Gamma_N)$ is $C^1$.

Let us now define the mass $\mu(\omega) := Q(\Phi(\cdot,\omega))$ on the unique solution $\Phi \in H^2_{\rm NK}(\Gamma_N)$
for each $\omega \in (-\infty,0)$. By Lemma \ref{lemma-mass-monotonicity}, the mapping $(0,1) \ni p_0 \mapsto \mathcal{M}(p_0) \in (0,\infty)$
is $C^1$ and monotonically decreasing, where $\mathcal{M}(p_0) = \pi \mu(\omega)$.
Since the mapping $(0,\infty) \ni \epsilon \mapsto p_0(\epsilon)$ is $C^1$ and
monotonically decreasing, whereas $\omega = -\epsilon^2$, we obtain that the
mapping $(-\infty,0) \ni \omega \mapsto \mu(\omega) \in (0,\infty)$ is $C^1$ and monotonically decreasing,
which follows from the chain rule
\begin{equation}
\label{chain-rule}
\frac{d \mu}{d \omega} = \frac{d \mu}{dp_0}\frac{dp_0}{d\epsilon} \frac{d\epsilon}{d \omega}.
\end{equation}
It remains to prove that $\mu(\omega) \to 0$ as $\omega \to 0$ and $\mu(\omega) \to \infty$ as $\omega \to -\infty$.

Since $\epsilon \to 0$ as $p_0 \to 1$, it follows from (\ref{mass-explicit})
that $\mu \to 0$ as $p_0 \to 1$. Moreover, the first term in (\ref{mass-explicit}) is smaller than
the second term in (\ref{mass-explicit}) due to
\begin{equation}
\label{limit-integral-again}
\int_{p_0}^{p_+} \frac{u^2 du}{\sqrt{E + A(u)}} =
\int_{p_0}^{p_+} \frac{u^2 du}{\sqrt{A(u) - A(p_+)}} \to 0 \quad \mbox{\rm as} \;\; p_0 \to 1.
\end{equation}
Hence, it follows from (\ref{mass-explicit}) that 
\begin{equation}
\lim_{\epsilon \to 0} \frac{\mu}{\epsilon} = 1.
\end{equation}
On the other hand, since $\epsilon \to \infty$ as $p_0 \to 0$,
we obtain $\mu \to \infty$ as $p_0 \to 0$. Moreover, the second term in (\ref{mass-explicit}) is smaller than
the first term in (\ref{mass-explicit}) since $1 - \sqrt{1-p_0^2} \to 0$ as $p_0 \to 0$ whereas 
\begin{equation}
\label{limit-integral-again-again}
\int_{p_0}^{p_+} \frac{u^2 du}{\sqrt{E + A(u)}} \to \int_0^1 \frac{u du}{\sqrt{1-u^2}} = 1 \quad \mbox{\rm as} \;\; p_0 \to 0.
\end{equation}
Hence, it follows from (\ref{mass-explicit}) that 
\begin{equation}
\lim_{\epsilon \to \infty} \frac{\mu}{\epsilon} = 2N.
\end{equation}
Thus, the mass $\mu(\omega)$ in (\ref{mass-explicit})
satisfies $\mu(\omega) \to 0$ as $\omega \to 0$ and $\mu(\omega) \to \infty$
as $\omega \to -\infty$. The proof of Theorem \ref{global-existence} is complete.

\begin{remark}
\label{on-u-v}
For every $\epsilon>0$, the solution $u_1$ to the boundary-value problem (\ref{bvp-i})
which corresponds to Theorem \ref{global-existence} is given by a positive, even function on $[-\pi \epsilon, \pi \epsilon]$
such that $u'(z) < 0$ for every $z \in (0, \pi \epsilon]$.
\end{remark}

\begin{remark}
\label{T-plus-C-divergence}
In the proof of Theorem \ref{global-existence}, we show that $\mathcal{T}(p_0) = T_+(p_0, \frac{1}{2N}\sqrt{A(p_0)}) \to \infty$ as
$p_0 \to 0$ using the estimate (\ref{divergent-integral}) in the limit $p_+ \to 1$ as $p_0 \to 0$.
In a similar manner, we can prove that $T_+(p_0, C \sqrt{A(p_0)}) \to \infty$ as $p_0 \to 0$ for any positive constant $C$.
\end{remark}

\section{Bifurcations from the positive single-lobe symmetric state}
\label{sec-global-stability}

By Theorem \ref{global-existence}, for every $\omega<0$, there exists a unique positive
single-lobe symmetric state $\Phi \in H_{\rm NK}(\Gamma_N)$. For every such $\Phi$, we define the self-adjoint operator
$\mathcal{L}: H_{\rm NK}(\Gamma_N) \subset L^2(\Gamma_N) \to L^2(\Gamma_N)$ as in (\ref{Jacobian}).
Thanks to the exponential decay of $\phi_0(x) \to 0$ as $x \to \infty$,
by Weyl's theorem, the spectrum of $\mathcal{L}$ in $L^2(\Gamma_N)$ consists of finitely many isolated eigenvalues of finite
multiplicities below $|\omega|$, which is the infimum of the continuous spectrum of $\mathcal{L}$ in (\ref{abs-cont-part}).

Here we prove Theorem \ref{global-stability}. We shall first group the negative and zero eigenvalues of $\mathcal{L}$
into three sets. By using the Sturm comparison theorem
and the analytical properties of the period function $T_+(p_0,q_0)$, we control the first eigenvalues in each set.
In the end, we prove that there exists only one value of $\omega \in (-\infty,0)$, labeled as $\omega_*$, for which $z(\mathcal{L}) = N-1$,
whereas $z(\mathcal{L}) = 0$ for $\omega \neq \omega_*$. We also show that $n(\mathcal{L}) = 1$ for $\omega \in (\omega_*,0)$
and $n(\mathcal{L}) = N$ for $\omega \in (-\infty,\omega_*)$.

Note that we avoid the surgery techniques for the count of nodal domains \cite{BKKM}, which do not provide precise information
on the Morse index for graphs with positive Betti number. Instead, we explore Sturm's comparison theory on bounded intervals
and further analytical properties of the period function. In particular, we show that the bifurcation at $\omega_*$ is related
to the existence of a critical point of the period function $T_+(p_0,q_0)$ with respect to the parameter $q_0$ at the corresponding
level curve on the $(u,v)$-plane.

\subsection{Eigenvalues of $\mathcal{L}$}

Let us consider the spectral problem $\mathcal{L} \Upsilon = \epsilon^2 \lambda \Upsilon$,
where $\Upsilon \in H^2_{\rm NK}(\Gamma_N)$ is an eigenfunction of $\mathcal{L}$ corresponding to the eigenvalue
$\epsilon^2 \lambda$ and the parameter $\epsilon$ is used to express $\omega = -\epsilon^2$ and the positive single-lobe symmetric state $\Phi$ by using
the scaling transformation (\ref{scaling-transform}) with $(u_1, u_2, \dots, u_N, u_0)$.
By using a similar transformation with $(v_1, v_2, \dots, v_N, v_0)$ for the eigenfunction $\Upsilon$,
we rewrite the spectral problem $\mathcal{L} \Upsilon = \epsilon^2 \lambda \Upsilon$ as the
following boundary-value problem:
\begin{equation}
\label{L-spectral-sym}
\left\{ \begin{array}{l}
-v_j''(z) + v_j(z) - 6 u_j(z)^{2} v_j(z) = \lambda v_j(z),
\quad z \in (-\pi \epsilon,\pi \epsilon), \quad j \in \{1,2,\dots,N\}, \\
-v_0''(z) + v_0(z) - 6 u_0(z)^{2} v_0(z) = \lambda v_0(z), \quad z > 0, \\
v_1(\pm \pi \epsilon) = v_2(\pm \pi \epsilon) = \dots = v_N(\pm \pi \epsilon) = v_0(0), \\
\sum_{j=1}^N v_j'(\pi \epsilon) - v_j'(-\pi \epsilon) = v_0'(0).
 \end{array} \right.
\end{equation}
In what follows, $\epsilon > 0$ is a fixed parameter
and the statements hold for every $\epsilon > 0$.

Due to the symmetry (\ref{sym-state-scaled}) on the positive single-lobe symmetric state $\Phi$, we have the following trichotomy.

\begin{lemma}
\label{lem-vertex-cond}
The set of eigenvalues $\lambda$ of the boundary-value problem (\ref{L-spectral-sym}) with $\lambda \leq 0$ is a union of sets 
$\mathcal{S}_1$, $\mathcal{S}_2$, and $\mathcal{S}_3$, where
\begin{itemize}
\item $\mathcal{S}_1$ consists of simple eigenvalues with $v_0 \not \equiv 0$ and even $v_1 = \dots = v_N$ on $[-\pi \epsilon, \pi \epsilon]$;
\item $\mathcal{S}_2$ consists of eigenvalues of multiplicity $(N-1)$ with $v_0 \equiv 0$ and even $v_j$ on $[-\pi \epsilon,\pi \epsilon]$ for every $j$;
\item $\mathcal{S}_3$ consists of eigenvalues of multiplicity $N$ with $v_0 \equiv 0$ and odd $v_j$ on $[-\pi \epsilon,\pi \epsilon]$ for every $j$.
\end{itemize}
Moreover, $\mathcal{S}_1 \cap \mathcal{S}_2 = \emptyset$ and $\mathcal{S}_2 \cap \mathcal{S}_3 = \emptyset$.
\end{lemma}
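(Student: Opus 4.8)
The plan is to reduce the boundary-value problem (\ref{L-spectral-sym}) to scalar Sturm--Liouville problems by exploiting the two symmetries of the symmetric state: the equality $u_1 = \dots = u_N$ from (\ref{sym-state-scaled}) and the evenness of $u_1$ on $[-\pi\epsilon, \pi\epsilon]$ from Remark \ref{on-u-v}. Since the potential $6 u_j(z)^2 = 6 u_1(z)^2$ is then one and the same even function on every loop, I would decompose each loop component into its even and odd parts, $v_j = v_j^e + v_j^o$, and observe that each part solves the scalar equation $-w'' + (1 - 6 u_1^2) w = \lambda w$ on $(-\pi\epsilon, \pi\epsilon)$. Writing $c := v_0(0)$, the continuity conditions $v_j(\pm \pi\epsilon) = c$ translate into $v_j^e(\pm \pi\epsilon) = c$ and $v_j^o(\pm \pi\epsilon) = 0$ for every $j$.

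The decisive step is to recast the Kirchhoff condition in terms of parities. Because $(v_j^e)'$ is odd and $(v_j^o)'$ is even, one has $v_j'(\pi\epsilon) - v_j'(-\pi\epsilon) = 2 (v_j^e)'(\pi\epsilon)$, so the Kirchhoff condition collapses to $2 \sum_{j=1}^N (v_j^e)'(\pi\epsilon) = v_0'(0)$ and involves only the even parts. This decouples (\ref{L-spectral-sym}) into an odd sector, in which the components $v_j^o$ are unconstrained by any vertex condition beyond the Dirichlet condition $v_j^o(\pm \pi\epsilon) = 0$, and an even sector that couples the $v_j^e$ to the half-line through $v_0$.

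In the odd sector the $v_j^o$ are odd Dirichlet eigenfunctions of the scalar problem; they exist precisely when $\lambda$ is an odd Dirichlet eigenvalue, the $N$ coefficients are free, and since $c = 0$ and $v_0'(0) = 0$ here, uniqueness for the half-line initial-value problem forces $v_0 \equiv 0$. This produces $\mathcal{S}_3$ with multiplicity $N$. In the even sector the even solution space of the scalar equation is one-dimensional, spanned by the even solution $y_e$ normalized by $y_e(0) = 1$, $y_e'(0) = 0$. If $\lambda$ is not an even Dirichlet eigenvalue, then $y_e(\pi\epsilon) \neq 0$, the condition $v_j^e(\pi\epsilon) = c$ forces $v_1^e = \dots = v_N^e$, the decaying half-line solution fixes $v_0$ up to scale, and the reduced Kirchhoff condition yields at most a one-dimensional (hence simple) eigenspace, necessarily with $c \neq 0$ so that $v_0 \not\equiv 0$; this is $\mathcal{S}_1$. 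If $\lambda$ is an even Dirichlet eigenvalue, then $y_e(\pi\epsilon) = 0$ forces $c = 0$ and hence $v_0 \equiv 0$, the $v_j^e = \alpha_j y_e$ are constrained only by $\sum_j \alpha_j = 0$ (using $y_e'(\pi\epsilon) \neq 0$), and the eigenspace is $(N-1)$-dimensional; this is $\mathcal{S}_2$.

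Disjointness then follows from the dichotomy just used together with Sturm theory: $\mathcal{S}_1$ requires $\lambda$ to not be an even Dirichlet eigenvalue whereas $\mathcal{S}_2$ requires it to be one, so $\mathcal{S}_1 \cap \mathcal{S}_2 = \emptyset$; and, the potential being even, every Dirichlet eigenfunction on $[-\pi\epsilon, \pi\epsilon]$ has a definite parity and the Dirichlet eigenvalues are simple, so no $\lambda$ is simultaneously an even and an odd Dirichlet eigenvalue, whence $\mathcal{S}_2 \cap \mathcal{S}_3 = \emptyset$. The main obstacle I anticipate is making the parity reduction of the Kirchhoff condition precise, since it is exactly what splits the problem into decoupled sectors; once that is done the multiplicities and the disjointness are standard one-dimensional facts. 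A secondary point that must be checked is that the decaying half-line solution does not vanish on $[0,\infty)$ for $\lambda \leq 0$ --- which follows by Sturm oscillation from the fact that the shifted soliton (\ref{soliton}) has $a > 0$, so its single possible node lies at $z < 0$ --- since this non-vanishing is what guarantees $v_0(0) = 0 \Rightarrow v_0 \equiv 0$ and thereby the clean separation of $\mathcal{S}_1$ from $\mathcal{S}_2$.
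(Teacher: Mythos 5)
Your proof is correct and follows essentially the same route as the paper's: both rest on the uniqueness and strict positivity at the vertex of the decaying half-line solution for $\lambda \le 0$, the parity decomposition on the loops permitted by the evenness of $u_1$, the observation that the Kirchhoff sum sees only the even parts, and the simplicity of the Dirichlet eigenvalues of the scalar problem. The only difference is organizational --- you perform the even/odd splitting of every $v_j$ before the case analysis, whereas the paper first splits on $v_0 \equiv 0$ versus $v_0 \not\equiv 0$ --- and your ordering in fact tightens the paper's assertion that $v_0(0) \neq 0$ forces each $v_j$ to be even, which is literally true only after the odd (Dirichlet) part has been split off into $\mathcal{S}_3$.
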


\begin{proof}
If $v_0 \not \equiv 0$, there exists only one solution of the second-order equation for $v_0$ which decays
to $0$ as $z \to \infty$, as is shown, e.g., in \cite[Lemma 5.1]{KP2}.
Hence, if $v_0 \not \equiv 0$, the multiplicity of $\lambda$ in the set $\mathcal{S}_1$ is one. In fact, the solution $v_0$
(up to normalization) is available in the following analytic form:
\begin{equation}
\label{v-0}
v_0(z) = V_0(z; \lambda): = e^{-\sqrt{1-\lambda} z} \frac{3 - \lambda + 3 \sqrt{1-\lambda} \tanh(z + a) - 3 {\rm sech}^2(z+a)}{3 - \lambda + 3 \sqrt{1-\lambda}}.
\end{equation}
Since $a>0$ for the symmetric state in Theorem \ref{global-existence}, 
it follows from (\ref{v-0}) for every $\lambda \leq 0$ that $v_0(z) > 0$ for every $z \geq 0$.

Thanks to the symmetry condition (\ref{sym-state-scaled}) and the even parity of $u_j$ in the symmetric state of Theorem \ref{global-existence}, if $v_j$ satisfies the boundary conditions
$v_j(-\pi \epsilon) = v_j(\pi \epsilon) = v_0(0) \neq 0$, then $v_j$ is even and $v_1 = v_2 = \dots = v_N$. Hence, $v_j$ is a solution of the 
following boundary-value problem:
\begin{equation}
\label{sp2}
\SP_1: \quad \quad
\left\{ \begin{array}{l}
-v''(z) + v(z) - 6 u_1(z)^{2} v(z) = \lambda v(z),
\quad z \in (-\pi \epsilon,\pi \epsilon), \\
v(-\pi \epsilon) = v(\pi \epsilon) = V_0(0; \lambda),\\
2N v'(\pi \epsilon) = V_0'(0; \lambda),
 \end{array} \right.
\end{equation}
where the prime denotes the derivative in $z$.

If $v_0 \equiv 0$, then  $v_j$ is a solution of 
the following Sturm--Liouville boundary-value problem
\begin{equation}
\label{sp1}
\SP_2: \quad \quad
\left\{ \begin{array}{l} -v''(z) + v(z) - 6 u_1(z)^{2} v(z) = \lambda v(z), \quad z \in (-\pi \epsilon,\pi \epsilon), \\
v(-\pi \epsilon) = v(\pi \epsilon) = 0. \end{array} \right.
\end{equation}
If $v_1$ is a solution to $\SP_2$, then so are $v_2, \dots, v_N$. By the linear superposition principle and the even parity of $u_1$,
the solution $v$ to $\SP_2$ is generally a linear combination of the even and odd functions. 

If $v_1$ is even, then the derivative boundary condition in (\ref{L-spectral-sym})
yields a nontrivial constraint:
\begin{equation}
\label{derivative-constraint}
\sum_{j=1}^N v_j'(\pi \epsilon) = 0
\end{equation}
and since $v'(\pi \epsilon) \neq 0$ for a nonzero solution of the spectral problem (\ref{sp1}),
then there are only $N-1$ combinations of $v_1,v_2,\dots,v_N$ satisfying the constraint (\ref{derivative-constraint}).
Hence the eigenvalue $\lambda$ in the set $\mathcal{S}_2$ has multiplicity $(N-1)$.

If $v_1$ is odd, then the derivative boundary condition in (\ref{L-spectral-sym}) is trivially satisfied,
hence there are $N$ linearly independent functions $v_1,v_2,\dots,v_N$ and the eigenvalue $\lambda$
in the set $\mathcal{S}_3$ has multiplicity $N$.

The boundary-value problem $\SP_2$ is the Sturm--Liouville problem with the Dirichlet boundary conditions,
hence its eigenvalues are all simple. This implies $\mathcal{S}_2 \cap \mathcal{S}_3 = \emptyset$.

Each $v(z)$ satisfying $\SP_1$ is even on $(-\pi \epsilon, \pi \epsilon)$.
Since $V_0(0;\lambda) > 0$ for every $\lambda \leq 0$, this implies that $v(\pm \pi \epsilon) > 0$
so that $v(z)$ does not satisfy $\SP_2$ and vice versa. This implies that
$\mathcal{S}_1 \cap \mathcal{S}_2 = \emptyset$.
\end{proof}

Let us order the eigenvalues in the spectral problem (\ref{L-spectral-sym}) counting their multiplicities as follows:
\begin{equation}
\label{order}
\lambda_1 \leq \lambda_2 \leq \lambda_3 \leq \dots
\end{equation}
By Lemma \ref{lem-vertex-cond}, each eigenvalue of the spectral problem (\ref{L-spectral-sym}) corresponds
to either $v_0 \not \equiv 0$ or $v_0 \equiv 0$, and so, the set of eigenvalues (counting multiplicities) in
the spectral problem (\ref{L-spectral-sym}) is in one-to-one correspondence with the union of
sets of eigenvalues of the boundary-value problems $\SP_1$ and $\SP_2$. Next, we control the sign of the first eigenvalues of the boundary-value problems $\SP_1$ and $\SP_2$.

\subsection{Eigenvalues of the boundary-value problems $\SP_1$ and $\SP_2$.}

We start with the first eigenvalue $\lambda_1$ of the spectral problem (\ref{L-spectral-sym}).
By the Rayleigh-Ritz principle (see \cite[Lemma 5.12]{Teschl}), this eigenvalue can be characterized variationally as follows:
\begin{equation}
\label{min-L1}
\lambda_1 = \inf_{\tilde{\Upsilon} \in H^1_C(\tilde{\Gamma}_N)} \left\{ \langle \tilde{\mathcal{L}} \tilde{\Upsilon},
\tilde{\Upsilon} \rangle_{L^2(\tilde{\Gamma}_N)} : \quad \| \tilde{\Upsilon} \|_{L^2(\tilde{\Gamma}_N)} = 1 \right\},
\end{equation}
where $\tilde{\mathcal{L}}$ is the $\epsilon$-scaled version of the linearized operator $\mathcal{L}$
and $\tilde{\Upsilon} = (v_1,v_2,\dots,v_N,v_0)$ is the scaled eigenfunction on
the $\epsilon$-scaled graph $\tilde{\Gamma}_N$. The following lemma states that $\lambda_1 < 0$ and $\lambda_1 < \lambda_2$ in (\ref{order}).

\begin{lemma}
\label{first-eig-L}
Let $\lambda = \gamma_1$ be the first eigenvalue of $\SP_1$.
Then, $\lambda_1 = \gamma_1$, moreover, $\lambda_1$ is negative and simple
with a strictly positive eigenfunction $\tilde{\Upsilon}_1$ on $\tilde{\Gamma}_N$.
\end{lemma}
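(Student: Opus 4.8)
The plan is to exploit the variational characterization (\ref{min-L1}) together with a ground-state (Perron--Frobenius) argument adapted to the vertex conditions on $\tilde{\Gamma}_N$. First I would locate $\lambda_1$ strictly below the essential spectrum. In the scaled variables the component on the half-line obeys $-v_0'' + v_0 - 6 u_0^2 v_0 = \lambda v_0$ with $u_0(z) \to 0$ as $z\to\infty$, so by Weyl's theorem the essential spectrum of $\tilde{\mathcal{L}}$ begins at $\lambda = 1$; hence any value of the infimum in (\ref{min-L1}) lying below $1$ is attained at a genuine eigenfunction. To push $\lambda_1$ below $0$ I would use the single-lobe state itself as a trial function: evaluating the quadratic form of $\tilde{\mathcal{L}}$ on the scaled state $\tilde{\Phi} = (u_1,\dots,u_N,u_0) \in H^1_C(\tilde{\Gamma}_N)$ reproduces, up to a positive power of $\epsilon$, the quantity $\langle \mathcal{L}\Phi,\Phi\rangle_{L^2(\Gamma_N)} = -4\|\Phi\|_{L^4(\Gamma_N)}^4 < 0$ from (\ref{quad-form-phi}). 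Thus $\lambda_1 < 0 < 1$ and the infimum is attained at a minimizer $\tilde{\Upsilon}_1$.

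Next I would produce a nonnegative minimizer. Replacing $\tilde{\Upsilon}_1$ by $|\tilde{\Upsilon}_1|$ keeps it in $H^1_C(\tilde{\Gamma}_N)$, since taking absolute values preserves continuity across the vertex, and it does not increase the form, because the gradient contribution does not grow under absolute values while the remaining terms depend only on $|\tilde{\Upsilon}_1|$; so I may assume $\tilde{\Upsilon}_1 \ge 0$. As a critical point of the form over $H^1_C(\tilde{\Gamma}_N)$, the minimizer solves (\ref{L-spectral-sym}) strongly, Kirchhoff vertex condition included. On each edge the component satisfies a linear second-order ODE, so by uniqueness of the initial-value problem a nonnegative component is either identically zero or strictly positive in the interior of that edge.

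The crucial step, and the one I expect to be the main obstacle, is to show that the half-line component $v_0$ is not identically zero, i.e.\ that $\lambda_1 \in \mathcal{S}_1$ rather than $\mathcal{S}_2$ or $\mathcal{S}_3$. I would argue by contradiction: if $v_0 \equiv 0$, then continuity forces every loop component to vanish at $z = \pm\pi\epsilon$, so each nonnegative $v_j$ solves a Dirichlet problem, giving $v_j'(\pi\epsilon) \le 0$ and $v_j'(-\pi\epsilon) \ge 0$, with strict inequalities for any $v_j \not\equiv 0$ (again by initial-value uniqueness). Since $\tilde{\Upsilon}_1 \not\equiv 0$, the Kirchhoff condition $\sum_{j=1}^N [v_j'(\pi\epsilon) - v_j'(-\pi\epsilon)] = v_0'(0) = 0$ then equates a strictly negative sum to zero, a contradiction. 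Hence $v_0 \not\equiv 0$, and the explicit formula (\ref{v-0}) yields $v_0 = V_0(\cdot\,;\lambda_1) > 0$ on $[0,\infty)$; in particular the vertex value $V_0(0;\lambda_1) > 0$, which forces each even loop component to have strictly positive endpoint data and hence to be strictly positive throughout. This simultaneously gives $\lambda_1 = \gamma_1$ and a strictly positive eigenfunction $\tilde{\Upsilon}_1$.

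Finally, I would deduce simplicity from positivity. Any eigenfunction for $\lambda_1$ is itself a minimizer, so the same reasoning applied to its absolute value shows that its absolute value is a strictly positive eigenfunction; consequently the original eigenfunction cannot vanish in the interior of any edge, has constant sign on each edge, and—being continuous and strictly positive at the vertex—has a single fixed sign on the connected graph $\tilde{\Gamma}_N$. Two functions of fixed sign cannot be $L^2(\tilde{\Gamma}_N)$-orthogonal, so the eigenspace of $\lambda_1$ is one-dimensional. This gives $\lambda_1 < \lambda_2$ in the ordering (\ref{order}), confirms that $\lambda_1$ lies strictly below $\mathcal{S}_2 \cup \mathcal{S}_3$, and establishes all four assertions of the lemma.
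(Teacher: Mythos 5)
Your proof is correct and follows the same variational skeleton as the paper: both start from the Rayleigh--Ritz characterization (\ref{min-L1}), push $\lambda_1$ below the essential spectrum and below zero using the trial function built from $\Phi$ via (\ref{quad-form-phi}), and deduce $\lambda_1=\gamma_1$ from strict positivity of the ground-state eigenfunction. The difference lies in how the two key facts are certified. The paper simply cites \cite[Proposition 3.3]{AdamiCV} for the existence, uniqueness and strict positivity of the constrained minimizer, and then obtains simplicity from the disjointness $\mathcal{S}_1\cap\mathcal{S}_2=\emptyset$ established in Lemma \ref{lem-vertex-cond}. You instead prove positivity from scratch: the $|\cdot|$ rearrangement, the initial-value-uniqueness dichotomy on each edge, and --- most usefully --- an explicit Kirchhoff-flux contradiction (if $v_0\equiv 0$, every nonzero nonnegative loop component contributes a strictly negative flux $v_j'(\pi\epsilon)-v_j'(-\pi\epsilon)$, so the vertex condition cannot hold) to rule out the Dirichlet alternative; simplicity then follows from the standard observation that two fixed-sign eigenfunctions cannot be $L^2(\tilde{\Gamma}_N)$-orthogonal. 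Your version is more self-contained and makes transparent exactly where the Kirchhoff coupling and the connectedness of the graph enter, at the cost of length; the paper's is shorter but outsources the heart of the argument to the cited proposition. The only cosmetic slip is the phrase ``strictly positive at the vertex'' near the end: for a general eigenfunction $w$ in the $\lambda_1$-eigenspace you only know $|w|>0$ at the vertex, i.e.\ $w$ is nonzero there with some definite sign, which is all the fixed-sign argument needs.
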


\begin{proof}
It follows from (\ref{quad-form-phi}) that $\lambda_1$ is negative. By the variational analysis on graphs,
as in \cite[Proposition 3.3]{AdamiCV}, the infimum (\ref{min-L1}) is uniquely attained at
some strictly positive $\tilde{\Upsilon}_1$ which belongs to $H^2_{HK}(\tilde{\Gamma}_N)$.
This positive $\tilde{\Upsilon}_1 = (v_1, v_2, \dots, v_N, v_0)$ is the corresponding eigenfunction in
the spectral problem (\ref{L-spectral-sym}). Hence, $v_0 \not \equiv 0$ and so, $\lambda_1$
coincides with the first eigenvalue $\gamma_1$ in the set $\mathcal{S}_1$ by Lemma \ref{lem-vertex-cond}.
Since $\mathcal{S}_1 \cap \mathcal{S}_2 = \emptyset$, whereas the first eigenvalue
of the Sturm--Liouville problem (\ref{sp1}) corresponds to an even eigenfunction,
it follows that $\lambda_1$ is not an eigenvalue in $\SP_2$, hence $\lambda_1$ is simple.
\end{proof}

Before proceeding with other eigenvalues, we review the Sturm--Liouville theory for
the boundary-value problem (\ref{sp1}). The following three propositions are well-known,
see, e.g., \cite[Section 5.5]{Teschl}.

\begin{proposition}
\label{SLP-eigenfunctions}
Let $\beta_n$ be the $n$-th eigenvalue of the Sturm--Liouville problem (\ref{sp1}) for $n \in \mathbb{N}$.
Then, $\beta_n$ is simple and its corresponding eigenfunction is even (odd) if
$n$ is odd (even). Moreover, the eigenfunction vanishes on
$(-\pi \epsilon, \pi \epsilon)$ at exactly $n-1$ nodal points.
\end{proposition}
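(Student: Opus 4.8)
The plan is to treat (\ref{sp1}) as a regular Sturm--Liouville problem on the symmetric interval $[-\pi\epsilon,\pi\epsilon]$ with Dirichlet boundary conditions and the \emph{even} potential $q(z) := 1 - 6 u_1(z)^2$, the evenness coming from the fact (Remark \ref{on-u-v}) that $u_1$ is even on $[-\pi\epsilon,\pi\epsilon]$. I would establish the three assertions---simplicity, the nodal count, and the even/odd pattern---in that logical order, quoting the first two from classical oscillation theory and supplying the parity argument, which is the only feature special to the even potential, by hand.

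First, simplicity. Because (\ref{sp1}) is a second-order linear ODE, the uniqueness theorem for the initial-value problem shows that a nontrivial solution with $v(-\pi\epsilon)=0$ must have $v'(-\pi\epsilon)\neq 0$ and is therefore determined up to a scalar multiple; hence each eigenspace is at most one-dimensional and every eigenvalue $\beta_n$ is simple. The nodal count is then exactly the statement of the classical Sturm oscillation theorem for regular Sturm--Liouville problems with separated boundary conditions (\cite[Section 5.5]{Teschl}): after ordering the eigenvalues increasingly, the eigenfunction for $\beta_n$ has precisely $n-1$ zeros in the open interval $(-\pi\epsilon,\pi\epsilon)$.

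Next, parity. Since $q$ is even and the Dirichlet conditions are symmetric under $z\mapsto -z$, if $v(z)$ solves (\ref{sp1}) at $\lambda=\beta_n$ then so does $v(-z)$, with the same eigenvalue. Simplicity forces $v(-z)=c\,v(z)$ for a constant $c$, and applying this relation twice gives $c^2=1$, so $c=\pm 1$: every eigenfunction is either even or odd. To pin down which, I would combine parity with the nodal count. An even eigenfunction cannot vanish at $z=0$, for $v(-z)=v(z)$ gives $v'(0)=0$, and $v(0)=v'(0)=0$ would force $v\equiv 0$ by uniqueness; its interior zeros therefore occur in symmetric pairs $\pm z_k$, so their number is even. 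An odd eigenfunction satisfies $v(0)=0$ with $v'(0)\neq 0$ (again by uniqueness), so it carries the simple zero at the origin together with symmetric pairs, giving an odd number of interior zeros. Matching this against ``$\beta_n$ has $n-1$ interior zeros'' yields that $v$ is even precisely when $n-1$ is even, i.e.\ $n$ is odd, and $v$ is odd precisely when $n$ is even, which is the claimed pattern.

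I do not expect a genuine obstacle, since the result is classical and the paper flags it as such; the only point requiring care is the parity--nodal-count bookkeeping, namely the clean exclusion of a zero at the origin for even eigenfunctions and its forced presence for odd ones, both of which rest on the same uniqueness argument used for simplicity. Everything else can be cited from \cite[Section 5.5]{Teschl}.
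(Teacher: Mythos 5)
Your proof is correct. The paper itself offers no argument for this proposition: it is stated as classical and simply referenced to \cite[Section 5.5]{Teschl}, so there is nothing to compare against beyond noting that your write-up supplies the standard details correctly. The only part that genuinely needs the structure of the problem at hand is the parity bookkeeping, and you handle it properly: the evenness of the potential follows from the evenness of $u_1$ (Remark \ref{on-u-v}), simplicity forces each eigenfunction to be even or odd, and the uniqueness theorem cleanly excludes a zero of an even eigenfunction at the origin while forcing one for an odd eigenfunction, so that matching the parity of the interior zero count against the Sturm oscillation count $n-1$ gives exactly the claimed alternation.
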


\begin{proposition}
\label{positivity-sp1}
Let $\beta_1$ be the first eigenvalue of the Sturm--Liouville problem (\ref{sp1}).
Then, for $\beta < \beta_1$, the initial value problem
\begin{equation}
\label{ivp-interval}
\left\{ \begin{array}{l} -v''(z) + v(z) - 6 u_1(z)^{2} v(z) = \beta v(z), \quad z \in (-\pi \epsilon,\pi \epsilon), \\
v(0) = 1, \quad v'(0) = 0, \end{array} \right.
\end{equation}
has the unique solution $v$, which is even and strictly positive on
$[-\pi \epsilon, \pi \epsilon]$. For $\beta > \beta_1$, the unique solution $v$ is sign-indefinite.
\end{proposition}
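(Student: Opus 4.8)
The plan is to reduce the statement to classical Sturm oscillation theory by exploiting the evenness of the potential. Since $u_1$ is even on $[-\pi\epsilon,\pi\epsilon]$ by Remark \ref{on-u-v}, the coefficient $1-6u_1(z)^2$ is even, so by uniqueness for the initial-value problem (\ref{ivp-interval}) the solution $v(\cdot;\beta)$ is even; in particular $v'(0)=0$ is consistent with evenness and it suffices to monitor the sign of $v$ on $[0,\pi\epsilon]$. First I would fix the anchoring fact at $\beta=\beta_1$: by Proposition \ref{SLP-eigenfunctions} the first Dirichlet eigenfunction $\varphi_1$ is even, has no interior nodal points, and is strictly positive on $(-\pi\epsilon,\pi\epsilon)$, whence $\varphi_1'(0)=0$ and $\varphi_1(0)>0$. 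By uniqueness for (\ref{ivp-interval}) this forces $v(\cdot;\beta_1)=\varphi_1/\varphi_1(0)$, so the first positive zero of $v(\cdot;\beta_1)$ sits exactly at $z=\pi\epsilon$ and $v(\cdot;\beta_1)>0$ on the open interval.

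For $\beta<\beta_1$ I would argue by a variational domain-monotonicity contradiction. Suppose $v(\cdot;\beta)$ vanished at some point in $(0,\pi\epsilon]$, and let $z_0$ be the first such point; then $v>0$ on $[0,z_0)$ and, by evenness, $v>0$ on $(-z_0,z_0)$ with $v(\pm z_0)=0$. Thus $v|_{[-z_0,z_0]}$ is a positive Dirichlet eigenfunction on the shorter interval, hence its ground state, with eigenvalue $\beta$. Monotonicity of the first Dirichlet eigenvalue under domain inclusion (via the Rayleigh quotient on $H^1_0$, strict unless $z_0=\pi\epsilon$) gives $\beta\geq\beta_1$, with equality forcing $z_0=\pi\epsilon$; either alternative contradicts $\beta<\beta_1$. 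Therefore $v(\cdot;\beta)$ has no zero in $(0,\pi\epsilon]$ and, being even with $v(0)=1$, is strictly positive on all of $[-\pi\epsilon,\pi\epsilon]$, including the endpoints.

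For $\beta>\beta_1$ I would invoke the Sturm comparison theorem directly. Writing the equation as $-w''+Q_\beta w=0$ with $Q_\beta:=1-6u_1^2-\beta$, increasing $\beta$ strictly lowers the coefficient, so $Q_\beta<Q_{\beta_1}$. Since $\varphi_1$ solves the $\beta_1$-equation and has consecutive zeros precisely at $-\pi\epsilon$ and $\pi\epsilon$, the comparison theorem forces every nontrivial solution of the $\beta$-equation---in particular the even solution $v(\cdot;\beta)$---to vanish somewhere in the open interval $(-\pi\epsilon,\pi\epsilon)$. A zero of a nontrivial solution of a second-order linear ODE is simple, so $v$ changes sign there; together with $v(0)=1>0$ this makes $v(\cdot;\beta)$ sign-indefinite, as required.

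The main obstacle, and the only place needing genuine care, is the comparison step for $\beta>\beta_1$: one must confirm that the bracketing zeros of $\varphi_1$ at $\pm\pi\epsilon$ are \emph{consecutive} (guaranteed by the absence of interior nodal points in Proposition \ref{SLP-eigenfunctions}) and that the strict inequality $Q_\beta<Q_{\beta_1}$ yields a zero strictly inside the interval rather than merely at an endpoint (so that $v(\cdot;\beta)$ is not a constant multiple of $\varphi_1$). Both are standard consequences of Sturm theory, so no new estimate is needed beyond the oscillation machinery already used for the period function, and the two regimes $\beta<\beta_1$ and $\beta>\beta_1$ meet continuously at the eigenvalue $\beta_1$ where the first zero crosses the endpoint.
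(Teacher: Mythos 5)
Your argument is correct and complete. Note, however, that the paper does not actually prove Proposition \ref{positivity-sp1}: it states Propositions \ref{SLP-eigenfunctions}--\ref{odd-function-sp1} as well-known facts and cites Section 5.5 of \cite{Teschl}, where they are obtained from Pr\"ufer-angle oscillation theory. So there is no in-paper proof to match; what you have supplied is a self-contained derivation. Your route is the standard variational/comparison one: evenness of $v(\cdot;\beta)$ from evenness of $u_1$ (Remark \ref{on-u-v}) and uniqueness for the initial-value problem; for $\beta<\beta_1$, the observation that a first zero $z_0\in(0,\pi\epsilon]$ would make $v|_{[-z_0,z_0]}$ a positive, hence ground-state, Dirichlet eigenfunction on $(-z_0,z_0)$ with eigenvalue $\beta$, so domain monotonicity of the first Dirichlet eigenvalue (extension by zero in $H^1_0$) forces $\beta\geq\beta_1$, a contradiction that also covers the endpoint case $z_0=\pi\epsilon$; and for $\beta>\beta_1$, Sturm comparison against $\varphi_1$, whose zeros at $\pm\pi\epsilon$ are consecutive by Proposition \ref{SLP-eigenfunctions}, producing a simple interior zero and hence a sign change. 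The only point worth flagging is that you correctly identified where care is needed (strictness in the comparison so the zero lands strictly inside the interval, and the interior zero not sitting at $z=0$ where $v=1$); both are handled. The Pr\"ufer approach of the cited reference and your eigenvalue-monotonicity approach are interchangeable here, and neither requires anything beyond the classical one-dimensional theory.
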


\begin{proposition}
\label{odd-function-sp1}
Let $\beta_2$ be the second eigenvalue of the Sturm--Liouville problem (\ref{sp1}).
Then, for $\beta < \beta_2$, the initial value problem
\begin{equation}
\label{ivp-interval-odd}
\left\{ \begin{array}{l} -v''(z) + v(z) - 6 u_1(z)^{2} v(z) = \beta v(z), \quad z \in (-\pi \epsilon,\pi \epsilon), \\
v(0) = 0, \quad v'(0) = 1, \end{array} \right.
\end{equation}
has the unique solution $v$, which is odd on $[-\pi \epsilon,\pi \epsilon]$ and strictly positive on
$(0,\pi \epsilon]$. For $\beta > \beta_2$, the unique solution $v$ is sign-indefinite on $(0,\pi \epsilon]$.
\end{proposition}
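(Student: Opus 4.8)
The plan is to exploit the even symmetry of the potential to reduce the initial value problem (\ref{ivp-interval-odd}) to a Dirichlet problem on the half-interval $[0,\pi\epsilon]$, and then to run a Sturm oscillation argument in the spectral parameter $\beta$. This mirrors the proof of Proposition \ref{positivity-sp1}, with the Neumann condition $v'(0)=0$ used there replaced by the Dirichlet condition $v(0)=0$ here.

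First I would record existence, uniqueness, and oddness. Since $u_1$ is even on $[-\pi\epsilon,\pi\epsilon]$ by Remark \ref{on-u-v}, the coefficient $q(z) := 1 - 6 u_1(z)^2$ is smooth and even, so standard linear ODE theory furnishes a unique solution $v(\cdot;\beta)$ of (\ref{ivp-interval-odd}). Setting $w(z) := -v(-z;\beta)$ and using that $q$ is even, one checks that $w$ solves the same equation with $w(0)=0$ and $w'(0)=1$; uniqueness then forces $w=v$, so $v(\cdot;\beta)$ is odd on $[-\pi\epsilon,\pi\epsilon]$.

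Next I would identify $\beta_2$ with the lowest eigenvalue of the half-interval Dirichlet problem
\[
-v''(z) + q(z) v(z) = \beta v(z), \quad z \in (0,\pi\epsilon), \quad v(0) = v(\pi\epsilon) = 0.
\]
Indeed, restricting an odd eigenfunction of $\SP_2$ in (\ref{sp1}) to $[0,\pi\epsilon]$ produces a solution of this half-interval problem, and conversely any solution of the half-interval problem extends by odd reflection to an odd eigenfunction of $\SP_2$; thus the odd eigenvalues of $\SP_2$ are exactly the eigenvalues of the half-interval Dirichlet problem. By Proposition \ref{SLP-eigenfunctions}, $\beta_2$ is the smallest eigenvalue of $\SP_2$ with an odd eigenfunction, hence the lowest eigenvalue of the half-interval problem, and its eigenfunction (a nonzero multiple of $v(\cdot;\beta_2)$) has no zero in the open interval $(0,\pi\epsilon)$, vanishing first at $z=\pi\epsilon$.

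The final step is the Sturm oscillation argument. Since $v(0;\beta)=0$ and $v'(0;\beta)=1>0$, the solution is strictly positive immediately to the right of $0$; let $z_1(\beta)\in(0,+\infty]$ be its first positive zero. By the Sturm comparison theorem applied to $-v'' + (q-\beta)v = 0$, the map $\beta \mapsto z_1(\beta)$ is strictly decreasing, and by the identification above $z_1(\beta_2)=\pi\epsilon$. For $\beta<\beta_2$ this yields $z_1(\beta)>\pi\epsilon$, so $v(\cdot;\beta)$ stays strictly positive on $(0,\pi\epsilon]$, whereas for $\beta>\beta_2$ we get an interior zero $z_1(\beta)\in(0,\pi\epsilon)$, which is simple (a double zero would force $v\equiv 0$ by uniqueness), so $v(\cdot;\beta)$ changes sign and is sign-indefinite on $(0,\pi\epsilon]$. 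I expect the only non-routine points to be justifying the strict monotonicity of $z_1(\beta)$ and the matching $z_1(\beta_2)=\pi\epsilon$; both are standard consequences of Sturm--Liouville theory once the reduction to the half-interval Dirichlet problem is in place, and they run in complete parallel to the even case treated in Proposition \ref{positivity-sp1}.
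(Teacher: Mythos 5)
Your argument is correct, but note that the paper does not prove Proposition \ref{odd-function-sp1} at all: it is stated, together with Propositions \ref{SLP-eigenfunctions} and \ref{positivity-sp1}, as a well-known fact with a citation to Section 5.5 of Teschl's book, so there is no in-paper proof to compare against. Your self-contained derivation is the standard one and is sound: oddness of $v(\cdot;\beta)$ follows from uniqueness and the evenness of $u_1$ (Remark \ref{on-u-v}); odd reflection identifies the odd spectrum of $\SP_2$ in (\ref{sp1}) with the Dirichlet spectrum on $(0,\pi\epsilon)$, and Proposition \ref{SLP-eigenfunctions} shows $\beta_2$ is the lowest such eigenvalue, with eigenfunction $v(\cdot;\beta_2)$ vanishing on $[0,\pi\epsilon]$ only at the endpoints; Sturm comparison in $\beta$ then yields the dichotomy. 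The only points you flag as needing care are indeed the only ones: strict monotonicity of the first positive zero $z_1(\beta)$ (a Wronskian/Picone argument for two solutions sharing the zero at the origin), and the exclusion of the boundary case $v(\pi\epsilon;\beta)=0$ for $\beta<\beta_2$, which your identification handles since such a $\beta$ would be a Dirichlet eigenvalue of the half-interval problem below its ground state. No gaps.
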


The following three lemmas state the ordering between the second eigenvalue of the boundary-value problem $\SP_1$ and the
first two eigenvalues of the boundary-value problem $\SP_2$. These eigenvalues contribute to the order of
eigenvalues $\lambda_2$ and $\lambda_3$ in (\ref{order}).

\begin{lemma}
\label{second-eig-L}
Let $\lambda = \beta_1$ be the first eigenvalue of the boundary-value problem $\SP_2$ in (\ref{sp1})
and $\lambda = \gamma_2$ be the second eigenvalue of the boundary-value problem $\SP_1$ in (\ref{sp2}).
If $\lambda_2$ in (\ref{order}) is negative or zero, then $\lambda_2 = \beta_1 < \gamma_2$.
Moreover, the eigenvalue $\lambda_2$ has an algebraic multiplicity $(N-1)$ and is associated
with $(N-1)$ even eigenfunctions $\tilde{\Upsilon}$ on $\tilde{\Gamma}_N$.
\end{lemma}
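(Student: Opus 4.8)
The plan is to reduce the claim to the single inequality $\beta_1 < \gamma_2$ and then to establish it by a positivity-versus-orthogonality argument. First I would catalogue the eigenvalues below the continuous spectrum. By Lemma \ref{lem-vertex-cond} the eigenvalues of (\ref{L-spectral-sym}) with $\lambda \le 0$ are exactly those of $\SP_1$ (each simple, in $\mathcal{S}_1$), the even eigenvalues of $\SP_2$ (each of multiplicity $N-1$, in $\mathcal{S}_2$), and the odd eigenvalues of $\SP_2$ (each of multiplicity $N$, in $\mathcal{S}_3$). By Proposition \ref{SLP-eigenfunctions} the even and odd eigenvalues of $\SP_2$ are precisely $\beta_1 < \beta_3 < \cdots$ and $\beta_2 < \beta_4 < \cdots$, so in particular $\beta_1 < \beta_2$. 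By Lemma \ref{first-eig-L} the smallest eigenvalue is $\lambda_1 = \gamma_1$, which is simple. Hence the next value in the ordering (\ref{order}) satisfies $\lambda_2 = \min\{\gamma_2,\beta_1\}$, and since $\mathcal{S}_1 \cap \mathcal{S}_2 = \emptyset$ we have $\gamma_2 \ne \beta_1$. Thus the whole statement follows once I show $\beta_1 < \gamma_2$ under the hypothesis $\lambda_2 \le 0$.

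To prove $\beta_1 < \gamma_2$ I would argue by contradiction, assuming $\gamma_2 < \beta_1$, in which case $\lambda_2 = \gamma_2 \le 0$. The corresponding eigenfunction of $\SP_1$ is even with vanishing derivative at $z=0$ (as established in the proof of Lemma \ref{lem-vertex-cond}), so after normalizing its value at $z=0$ to $1$ it coincides with the unique solution of the initial-value problem (\ref{ivp-interval}) at $\beta = \gamma_2$. Since $\gamma_2 < \beta_1$, Proposition \ref{positivity-sp1} gives that this loop component is strictly positive on $[-\pi\epsilon,\pi\epsilon]$. Its half-line component is $v_0 = V_0(\cdot\,;\gamma_2)$, and because $\gamma_2 \le 0$ and $a>0$ the explicit formula (\ref{v-0}) forces $V_0(z;\gamma_2) > 0$ for all $z \ge 0$. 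Therefore the full eigenfunction $\tilde{\Upsilon}_{\gamma_2}$ on $\tilde{\Gamma}_N$ is strictly positive. But the ground-state eigenfunction $\tilde{\Upsilon}_1$ at $\gamma_1 < \gamma_2$ is also strictly positive by Lemma \ref{first-eig-L}, while eigenfunctions of the self-adjoint operator $\tilde{\mathcal{L}}$ belonging to the distinct eigenvalues $\gamma_1 \ne \gamma_2$ must be orthogonal in $L^2(\tilde{\Gamma}_N)$. Two strictly positive functions cannot be orthogonal, so $\langle \tilde{\Upsilon}_1, \tilde{\Upsilon}_{\gamma_2} \rangle > 0$ delivers the contradiction and yields $\gamma_2 > \beta_1$.

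With $\beta_1 < \gamma_2$ in hand, $\lambda_2 = \beta_1$. Because $\beta_1$ lies in $\mathcal{S}_2$ it carries multiplicity $N-1$ with even loop components and $v_0 \equiv 0$; moreover $\beta_1 < \beta_2$ guarantees that the odd $\SP_2$ eigenvalue feeding $\mathcal{S}_3$ is strictly larger and $\mathcal{S}_1 \cap \mathcal{S}_2 = \emptyset$ excludes any coincidence with $\gamma_2$, so no eigenfunctions from $\mathcal{S}_1$ or $\mathcal{S}_3$ collapse onto $\lambda_2$. This gives the algebraic multiplicity $N-1$ and the $(N-1)$ even eigenfunctions claimed. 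I expect the main obstacle to be the verification that the candidate eigenfunction at $\gamma_2$ is strictly positive on the \emph{entire} graph, not merely on the loops: the loop positivity is exactly Proposition \ref{positivity-sp1}, but the half-line positivity of $V_0(\cdot\,;\gamma_2)$ requires $\gamma_2 \le 0$, which is precisely where the hypothesis $\lambda_2 \le 0$ is indispensable and without which the sign of $V_0(\cdot\,;\gamma_2)$ could not be read off from (\ref{v-0}).
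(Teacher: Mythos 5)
Your proposal is correct and follows essentially the same route as the paper: reduce to the dichotomy $\lambda_2 = \gamma_2$ versus $\lambda_2 = \beta_1$ via Lemma \ref{lem-vertex-cond}, and rule out the first alternative by showing that the would-be eigenfunction at $\gamma_2$ is strictly positive on all of $\tilde{\Gamma}_N$ (Proposition \ref{positivity-sp1} on the loops, the explicit formula (\ref{v-0}) with $\lambda\le 0$ on the half-line), which contradicts $L^2$-orthogonality with the strictly positive ground state $\tilde{\Upsilon}_1$. The multiplicity count via $\mathcal{S}_2$ and the disjointness of the three sets also matches the paper's argument.
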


\begin{proof}
Let $\lambda_2$ be the second eigenvalue of the spectral problem (\ref{L-spectral-sym})
with an eigenfunction $\tilde{\Upsilon}_2 = (v_1, v_2, \dots, v_N, v_0)$.
If $\lambda_2 \in (-\infty, 0]$, then either $v_0 \equiv 0$ or
$v_0(z)>0$ for all $z\geq 0$ thanks to the analytic form (\ref{v-0}).

If $v_0 \equiv 0$, then $\lambda_2$ coincides with the first eigenvalue in $\SP_2$, which is $\beta_1$.
Then, by Proposition \ref{SLP-eigenfunctions}, each $v_j$ is even and $\lambda_2$ belongs to the set $\mathcal{S}_2$ in Lemma \ref{lem-vertex-cond}.
Since $\mathcal{S}_1 \cap \mathcal{S}_2 = \emptyset$ in Lemma \ref{lem-vertex-cond},
then $\lambda_2 \neq \gamma_2$, and since $\gamma_2$ is also an eigenvalue of the spectral problem
(\ref{L-spectral-sym}), it follows that $\lambda_2 < \gamma_2$.

If $v_0(z)>0$ for all $z\geq 0$, we have that $\lambda_2 = \gamma_2$ belongs to set $\mathcal{S}_1$.
Since $\mathcal{S}_1 \cap \mathcal{S}_2 = \emptyset$ in Lemma \ref{lem-vertex-cond},
we have $\lambda_2 \neq \beta_1$, and since $\beta_1$ is also an eigenvalue of the spectral problem
(\ref{L-spectral-sym}), it follows that $\lambda_2 < \beta_1$.
Therefore, each even $v_j$ is constant proportional to the unique solution of the initial-value problem (\ref{ivp-interval})
with $\beta = \lambda_2 < \beta_1$. By Proposition \ref{positivity-sp1}, each $v_j$ is strictly positive on $[-\pi \epsilon,\pi \epsilon]$.
As a result, the eigenfunction $\tilde{\Upsilon}_2$ is strictly positive on $\tilde{\Gamma}_N$.
Since the eigenfunction $\tilde{\Upsilon}_1$ in Lemma \ref{first-eig-L} is also strictly positive on $\tilde{\Gamma}_N$,
the $L^2(\tilde{\Gamma}_N)$-inner product of $\tilde{\Upsilon}_1$ and $\tilde{\Upsilon}_2$ is not zero, which
contradicts to the orthogonality of eigenfunctions for distinct eigenvalues to the spectral problem (\ref{L-spectral-sym}).
Hence $v_0 \not \equiv 0$ is impossible so that $\lambda_2 = \beta_1 < \gamma_2$.
\end{proof}

\begin{lemma}
\label{gamma-2-nonzero}
Let $\lambda = \gamma_2$ be the second eigenvalue of the boundary-value problem $\SP_1$ in (\ref{sp2}).
Then, $\gamma_2 \neq 0$.
\end{lemma}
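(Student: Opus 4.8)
The plan is to argue by contradiction. Suppose $\gamma_2 = 0$, so that $\lambda = 0$ is an eigenvalue of $\SP_1$; by Lemma \ref{lem-vertex-cond} it is realized on $\tilde{\Gamma}_N$ by an eigenfunction $\tilde{\Upsilon} = (v,\dots,v,v_0) \in \ker\mathcal{L}$ in the sector $\mathcal{S}_1$, with $v$ even on $[-\pi\epsilon,\pi\epsilon]$ and $v_0 = V_0(\cdot\,;0) > 0$. The first step is to make the vertex data explicit. At $\lambda = 0$ the decaying half-line solution in (\ref{v-0}) is proportional to the derivative $u_0'$ of the soliton (\ref{soliton}); since $u_0'' = u_0 - 2u_0^3$ gives $u_0''(0) = p_0(1-2p_0^2) = \tfrac12 A'(p_0)$, the quantities $V_0(0;0)$ and $V_0'(0;0)$ are obtained in closed form in terms of $p_0$ and $t := \tanh a = \sqrt{1-p_0^2}$. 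The Robin-type matching in $\SP_1$ then reduces to a single scalar relation between $v(\pi\epsilon)$ and $v'(\pi\epsilon)$.

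Next I would translate the interior solution into the language of the period function. The even solution of the $\lambda=0$ equation on a loop normalized by $v(0)=1$, $v'(0)=0$ is precisely $v_e(z) = \partial_{p_+} u_1(z)$, the variation of the single-lobe profile with respect to its amplitude $p_+$. Differentiating the quadrature $z = \int_{u(z)}^{p_+}(A(\xi)-A(p_+))^{-1/2}\,d\xi$ at fixed $z = \pi\epsilon$, together with $q_0^2 = A(p_0)-A(p_+)$, yields the identity $v_e(\pi\epsilon) = -\tfrac12 A'(p_+)\,\partial_{q_0}T_+$; a Wronskian between $v_e$ and the odd solution $u_1'$ then fixes $v_e'(\pi\epsilon)$ in terms of $v_e(\pi\epsilon)$, $A'(p_0)$, $A'(p_+)$ and $q_0$. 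Because $p_+ \in (p_*,1)$ forces $A'(p_+) < 0$, the sign of $v_e(\pi\epsilon)$ coincides with that of $\partial_{q_0}T_+$.

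Feeding these into the matching relation collapses the condition $0 \in \mathcal{S}_1$ to a single scalar equation $\partial_{q_0}T_+ = R(p_0,N)$, where $R$ is an explicit nonzero rational function whose sign equals that of $2p_0^2-1$. This should be contrasted with the vanishing of the first (even) Dirichlet eigenvalue $\beta_1$ of $\SP_2$, the mechanism of the bifurcation, which occurs exactly when $v_e(\pi\epsilon) = 0$, i.e. when $\partial_{q_0}T_+ = 0$; hence $\gamma_2 = 0$ and $\beta_1 = 0$ can never occur at the same $\epsilon$. To upgrade this to $\gamma_2 \neq 0$ outright, I would use orthogonality of $\tilde{\Upsilon}$ to the strictly positive first eigenfunction $\tilde{\Upsilon}_1$ of Lemma \ref{first-eig-L}: since $v_0 > 0$ on the half-line, $v$ must change sign on the loop, so by Proposition \ref{positivity-sp1} the even solution $v_e$ has an interior zero and $\beta_1 < 0$. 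In particular the case where $v_e$ stays positive is excluded immediately, and counting the parity of the interior zeros of $v_e$ pins the sign of $\partial_{q_0}T_+$ and, through the sign of $R$, confines any surviving coincidence to $p_0 \in (0,p_*)$.

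The main obstacle is the remaining quantitative step. For $p_0 \in (0,p_*)$ the sign of $\partial_{q_0}T_+$ agrees with that of $R$, so a bare sign comparison does not suffice and one must bound the period-function derivative itself. I expect to do this with the exact-differential / integration-by-parts device already used in Lemmas \ref{global-decreasing} and \ref{lemma-mass-monotonicity}: writing $(E_0(p_0)+A(p_*))\,\partial_{q_0}T_+$ as a manifestly signed integral plus explicit boundary terms, one should be able to separate $\partial_{q_0}T_+$ from the curve $R(p_0,N)$ for all $p_0 \in (0,p_*)$ and all $N \geq 2$, yielding the contradiction. A secondary point to check is that $v_e$ never acquires two or more interior zeros at $\lambda = 0$ (equivalently that at most one even Dirichlet eigenvalue is negative), which disposes of the even-parity subcase $p_0 > p_*$ and is again naturally read off from the same period-function estimate.
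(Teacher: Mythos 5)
Your reduction is sound as far as it goes, and it is a genuinely different route from the paper's. The identities you invoke all check out: with $v_e$ the even solution normalized by $v_e(0)=1$, $v_e'(0)=0$, one indeed has $v_e(\pi\epsilon) = -\tfrac12 A'(p_+)\,\partial_{q_0}T_+$, the Wronskian of $v_e$ with $u_1'$ gives $\tfrac12 A'(p_+) = \tfrac12 A'(p_0)v_e(\pi\epsilon) + q_0 v_e'(\pi\epsilon)$, and the vertex data $V_0(0;0)=\tfrac12\sqrt{A(p_0)}$, $V_0'(0;0)=-\tfrac14 A'(p_0)$ then collapse $\gamma_2=0$ to the scalar equation $\partial_{q_0}T_+ = -8N^2/\bigl((4N^2-1)A'(p_0)\bigr)$ along $q_0=\tfrac{1}{2N}\sqrt{A(p_0)}$, with the sign of the right-hand side equal to that of $2p_0^2-1$, as you say. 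The problem is that you never close this equation, and the two steps you defer carry essentially all of the difficulty. For $p_0\in(0,p_*)$ both sides are negative (Lemma \ref{monotone-on-left} gives $\partial_{q_0}T_+<0$ there), so the lemma reduces to a genuine quantitative separation of $\partial_{q_0}T_+$ from an explicit negative curve, uniformly in $p_0$ and $N$; you only express the expectation that the integration-by-parts device of Lemma \ref{global-decreasing} will deliver this, without producing the inequality or any evidence that it holds. For $p_0\in(p_*,1)$ your sign argument needs $v_e(\pi\epsilon)<0$, i.e.\ an odd number of zeros of $v_e$ in $(0,\pi\epsilon)$; orthogonality to the positive ground state $\tilde{\Upsilon}_1$ only yields at least one sign change, and the claim that at most one even Dirichlet eigenvalue of $\SP_2$ is negative is again asserted rather than proved. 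So the proposal is a correct reduction followed by two unproved estimates.

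For comparison, the paper avoids any quantitative bound on $\partial_{q_0}T_+$ by a structural argument: since $t=\partial_{p_0}u$ is proportional to $s=\partial_{q_0}u$ for $p_0\neq p_*$, differentiating the boundary data of the solution branch with respect to $\epsilon$ produces a second expression for the logarithmic derivative $2Ns'(\pi\epsilon)/s(\pi\epsilon)$, namely $\varphi''(a)\left[a'-2\pi N\right]/\bigl(\varphi'(a)\left[a'-\tfrac{\pi}{2N}\right]\bigr)$, which is incompatible with the value $\varphi''(a)/\varphi'(a)$ forced by the $\gamma_2=0$ matching unless $4N^2=1$; the degenerate case $p_0=p_*$ is dispatched separately via the first integral. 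That the authors resort to this device is a strong hint that the separation you hope to extract from (\ref{derivative-E-plus}) is not routine; as written, your argument has a genuine gap at its decisive step.
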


\begin{proof}
To show that $\gamma_2 \neq 0$, we consider the boundary-value problem
\begin{equation}
\label{bvp-i-2}
\left\{ \begin{array}{l} -u''(z) + u(z) - 2 u(z)^3 = 0, \quad
z \in (-T_+(p_0, q_0), T_+(p_0, q_0)), \\
u(-T_+(p_0, q_0)) = u(T_+(p_0, q_0)) = p_0, \\
u'(-T_+(p_0, q_0)) = -u'(T_+(p_0, q_0))= q_0,
\end{array} \right.
\end{equation}
where $T_+(p_0, q_0)$ is defined in (\ref{period}) with two independent parameters $p_0 \in (0,1)$ and $q_0 \in (0,\infty)$.
The unique solution of the boundary-value problem (\ref{bvp-i}) is obtained at $q_0 = \frac{1}{2N} \sqrt{A(p_0)}$, for which
$T(p_0,q_0) = \mathcal{T}(p_0) = \pi \epsilon$ in (\ref{root-i}). We use the notation $u(z) = u(z; p_0, q_0)$
and recall that $u(z; p_0, q_0)$ is a $C^1$ function with respect to  $p_0$ and $q_0$.

Define $s(z; p_0, q_0) := \partial_{q_0} u(z; p_0, q_0)$. Then, $s(z; p_0, q_0)$ is an even solution
of the following differential equation:
\begin{equation}
\label{eqn-s}
-s''(z) + s(z) - 6 u(z)^2 s(z) = 0, \quad
z\in (-T_+(p_0, q_0), T_+(p_0, q_0)).
\end{equation}
Moreover, since $u(0; p_0, q_0) = p_+$, where $p_+$ is defined by (\ref{p+p-}),
we have $s(0; p_0, q_0) = \partial_{q_0} p_+$, where $\partial_{q_0} p_+ \neq 0$.
Indeed, after differentiating $E(u,v) = q_0^2 - A(p_0) = -A(p_+)$ with respect to $q_0$, we have
$$
2q_0 = 2p_+(2p_+^2 -1) \partial_{q_0} p_+.
$$
Since $p_+ > p_* = \frac{1}{\sqrt{2}}$ and $q_0>0$, we have
$s(0; p_0, q_0) = \partial_{q_0} p_+ > 0$.

Similarly, we define $t(z; p_0, q_0) := \partial_{p_0} u(z; p_0, q_0)$, and notice that $t(z; p_0, q_0)$ is also an even
solution of the differential equation (\ref{eqn-s}). Differentiating $E(u,v) = q_0^2 - A(p_0) = -A(p_+)$ with respect to $p_0$ yields
$$
2p_0(2p_0^2-1) = 2p_+(2p_+^2-1)\partial_{p_0} p_+.
$$
If $p_0 = p_* = 1/\sqrt{2}$, then
$t(0; p_0, q_0) = \partial_{p_0} p_+ = 0$ so that
$t(z; p_0, q_0) \equiv 0$ is zero solution to (\ref{eqn-s}). Otherwise,
$t(0; p_0, q_0) = \partial_{p_0} p_+ \neq 0$ and $t(z; p_0, q_0)$ is a nonzero even solution
to (\ref{eqn-s}).

For $q_0 = \frac{1}{2N} \sqrt{A(p_0)}$,
we have $T_+(p_0,q_0) = \mathcal{T}(p_0) = \pi \epsilon$, and since $s(0;p_0,q_0) \neq 0$,
the solution $s(z; p_0, q_0)$ of the differential equation (\ref{eqn-s}) with this $q_0$
is constant proportional to the unique solution to the initial-value problem (\ref{ivp-interval})
with $\beta = 0$. Moreover, if $p_0 \neq p_*$, the above statement also applies to $t(z; p_0, q_0)$,
so that there exists a nonzero constant $C$ such that $t(z; p_0, q_0) = C s(z; p_0, q_0)$.

If $\lambda = \gamma_2 = 0$ in $\SP_1$, we know from (\ref{v-0}) that
$V_0(z; 0) = \frac{1}{2} \sech (z+a) \tanh(z+a)$, where $a$ is related to $p_0$ by $p_0 = \sech(a)$.
Moreover, by Lemma \ref{global-decreasing}, $a$ and $p$ are $C^1$ functions of $\epsilon$, that is
$a = a(\epsilon)$ and $p_0 = p_0(\epsilon)$. We also define $\varphi(z) := \sech(z)$, and rewrite the boundary values in the spectral problem $\SP_1$ as follows:
\begin{equation}
\label{v-0-values}
V_0(0; 0) = - \frac{1}{2} \varphi'(a), \quad {\rm and} \quad
V_0'(0; 0) = -\frac{1}{2} \varphi''(a).
\end{equation}
Solution to the differential equation in $\SP_1$ for $\lambda = 0$
is given by $v(z) = C_0 s(z; p_0, q_0)$, where $q_0 = \frac{1}{2N} p_0\sqrt{1-p_0^2}$
and $C_0$ is a real constant. By using the boundary conditions in $\SP_1$ and the representation (\ref{v-0-values}),
we obtain the following system of equations:
\begin{equation}
\label{bc-gamma-0}
\left\{ \begin{array}{l}
-2 C_0 s(\pi \epsilon; p_0, q_0) = \varphi'(a) \\
-4 N C_0 s'(\pi \epsilon; p_0, q_0) = \varphi''(a),
\end{array} \right.
\end{equation}
where $T_+(p_0, q_0) = \mathcal{T}(p_0) = \pi \epsilon$ by (\ref{root-i}).
Since $a(\epsilon) > 0$ for every positive $\epsilon$, we know $\varphi'(a) \neq 0$ and from (\ref{bc-gamma-0}) we obtain
\begin{equation}
\label{quotient-1}
\frac{2N s'(\pi \epsilon; p_0, q_0)}{s(\pi \epsilon; p_0, q_0)} =
\frac{\varphi''(a)}{\varphi'(a)}.
\end{equation}

On the other hand, using that $p_0 = \varphi(a)$ and
$q_0 = - \frac{1}{2N}\varphi'(a)$ we rewrite the boundary values in (\ref{bvp-i-2}) at
$T_+(p_0, q_0) = \mathcal{T}(p_0) = \pi \epsilon$ to be
\begin{equation}
\label{bc-u-gamma-0}
\left\{ \begin{array}{l}
u (\pi \epsilon; p_0, q_0) = \varphi(a) \\
2N u'(\pi \epsilon; p_0, q_0) = \varphi'(a),
\end{array} \right.
\end{equation}

For $p_0 \neq p_*$, we use that $a$, $p_0$, and $q_0$ are $C^1$ functions of $\epsilon$,
hence we differentiate (\ref{bc-u-gamma-0}) with respect to $\epsilon$ and since $t(z; p_0, q_0) = C s(z; p_0,q_0)$ we obtain
\begin{equation}
\label{bc-u-gamma-0-2}
\left\{ \begin{array}{l}
s (\pi \epsilon; p_0, q_0) \left[C p_0'(\epsilon) + q_0'(\epsilon) \right] = \varphi'(a) \left[ a'(\epsilon) - \frac{\pi}{2N} \right] \\
2N s' (\pi \epsilon; p_0, q_0) \left[C p_0'(\epsilon) + q_0'(\epsilon) \right] = \varphi''(a) \left[ a'(\epsilon) - 2 \pi N \right]
\end{array} \right.
\end{equation}
Note that $C p_0'(\epsilon) + q_0'(\epsilon) \neq 0$ since
$\varphi'(a) \neq 0 \neq \varphi''(a)$ for $p_0 \neq p_*$.
Hence, it follows from (\ref{bc-u-gamma-0-2}) that
$$
\frac{2N s'(\pi \epsilon; p_0, q_0)}{s(\pi \epsilon; p_0, q_0)} =
\frac{\varphi''(a) \left[ a'(\epsilon) - 2 \pi N \right]}{\varphi'(a) \left[ a'(\epsilon) - \frac{\pi}{2N} \right]},
$$
which contradicts to (\ref{quotient-1}) since $\varphi''(a) \neq 0$ for $p_0 \neq p_*$.

For $p_0 = p_*$, we have $s'(\pi \epsilon; p_0, q_0) = 0$ by
(\ref{bc-gamma-0}). Then, we differentiate the invariant relation
$ q_0^2 - p_0^2 + p_0^4 = \left[u'(z; p_0, q_0) \right]^2 -
u^2(z; p_0, q_0) +u^4(z; p_0, q_0)$ with respect to $q_0$ and obtain
\begin{equation}
\label{local-der-E}
2q_0 = 2 u'(z; p_0, q_0) s'(z; p_0, q_0) + 2 u(z; p_0, q_0)
\left[2 u^2(z; p_0, q_0) -1 \right] s(z; p_0, q_0).
\end{equation}
For $z = T_+(p_0, q_0) = \mathcal{T}(p_0) = \pi \epsilon$,
we substitute $s'(\pi \epsilon; p_0, q_0) = 0$ and $u(\pi \epsilon; p_0, q_0) = p_*$ in (\ref{local-der-E})
to get $2q_0 = 0$, which is a contradiction. In both cases, $\lambda = \gamma_2 = 0$
is impossible in $\SP_1$.
\end{proof}

\begin{lemma}
\label{second-pos}
Let $\lambda = \beta_2$ be the second eigenvalue of the boundary-value problem $\SP_2$ in (\ref{sp1}).
Then, $\beta_2>0$.
\end{lemma}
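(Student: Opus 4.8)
The plan is to exhibit the translational mode $u_1'$ as the odd solution of the linearized equation at $\lambda = 0$ and then read off $\beta_2 > 0$ from the dichotomy of Proposition \ref{odd-function-sp1}. First I would differentiate the stationary equation $-u_1'' + u_1 - 2 u_1^3 = 0$ with respect to $z$, which shows that $w := u_1'$ solves the homogeneous linearized equation
\begin{equation*}
-w''(z) + w(z) - 6 u_1(z)^2 w(z) = 0, \quad z \in (-\pi\epsilon, \pi\epsilon),
\end{equation*}
namely equation (\ref{eqn-s}), i.e.\ the differential equation appearing in $\SP_2$ evaluated at $\lambda = 0$. By Remark \ref{on-u-v}, the state $u_1$ is even and strictly decreasing on $(0, \pi\epsilon]$, so $w$ is odd with $w(0) = 0$ and $w(z) < 0$ for every $z \in (0, \pi\epsilon]$.

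Next I would normalize $w$ to match the initial-value problem (\ref{ivp-interval-odd}). Evaluating the stationary equation at $z = 0$ gives $u_1''(0) = u_1(0)\bigl(1 - 2 u_1(0)^2\bigr)$; since $u_1(0) = p_+ > p_* = \tfrac{1}{\sqrt 2}$ by (\ref{turning-points}), we have $u_1''(0) < 0$. Therefore the rescaled function $v(z) := u_1'(z)/u_1''(0)$ satisfies $v(0) = 0$ and $v'(0) = 1$, so $v$ is precisely the unique solution of (\ref{ivp-interval-odd}) with $\beta = 0$. Because $u_1'(z) < 0$ on $(0,\pi\epsilon]$ while $u_1''(0) < 0$, the quotient $v(z)$ is strictly positive on $(0, \pi\epsilon]$; in particular $v(\pi\epsilon) \neq 0$.

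Finally, I would invoke Proposition \ref{odd-function-sp1}: the solution of (\ref{ivp-interval-odd}) is strictly positive on $(0, \pi\epsilon]$ exactly when $\beta < \beta_2$, and it vanishes at $z = \pi\epsilon$ only at $\beta = \beta_2$. Since at $\beta = 0$ the solution $v$ is strictly positive on $(0, \pi\epsilon]$ and nonzero at the endpoint, the value $\beta = 0$ must lie strictly below $\beta_2$, which yields $\beta_2 > 0$. The only delicate point is the sign bookkeeping: one must verify $u_1''(0) < 0$ to fix the normalization and confirm positivity of $v$, and one must use the strict monotonicity from Remark \ref{on-u-v} to guarantee $v$ has no zero on $(0,\pi\epsilon]$; once these are secured, the dichotomy of Proposition \ref{odd-function-sp1} closes the argument at once.
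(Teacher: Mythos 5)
Your proposal is correct and follows essentially the same route as the paper: both identify the translational mode $u_1'$ as the odd zero-mode of the linearized equation, use $u_1''(0) = p_+(1-2p_+^2) < 0$ (from $p_+ > p_*$) to normalize it to the solution of (\ref{ivp-interval-odd}) with $\beta = 0$, observe that this solution is strictly positive on $(0,\pi\epsilon]$ by the monotonicity of $u_1$, and conclude $\beta_2 > 0$ from Proposition \ref{odd-function-sp1}. The sign bookkeeping you flag is exactly the content of the paper's argument, so no changes are needed.
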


\begin{proof}
Define $r(z; p_0, q_0) := u'(z; p_0, q_0)$, where the prime stands for the derivative with respect to $z$.
We have that $r(z; p_0, q_0)$ is odd and that $r'(0; p_0, q_0) = u''(0; p_0, q_0) = (1-2p_+^2)p_+ <0$.
For $q_0 = \frac{1}{2N} \sqrt{A(p_0)}$,
we have $T_+(p_0,q_0) = \mathcal{T}(p_0) = \pi \epsilon$, and since $r'(0;p_0,q_0) \neq 0$,
$r(z;p_0,q_0)$ with this $q_0$ is constant proportional to
the unique solution to the initial-value problem (\ref{ivp-interval-odd}) with $\beta = 0$.
By the construction of $u(z; p_0,q_0)$ in (\ref{bvp-i-2}) and negativity of $r'(0;p_0,q_0)$, the function $-r(z;p_0,q_0)$
with this $q_0$ is strictly positive on $(0, \pi \epsilon]$, and by Proposition \ref{odd-function-sp1},
$0 = \beta < \beta_2$.
\end{proof}

\subsection{Existence of a zero eigenvalue in $\SP_2$.}

It follows from Lemmas \ref{first-eig-L}, \ref{second-eig-L}, \ref{gamma-2-nonzero}, and \ref{second-pos}
that, when the parameter $\epsilon$ is increased, the only eigenvalue of the spectral problem (\ref{L-spectral-sym})
which may cross zero and become the second negative eigenvalue $\lambda_2$ in addition to the eigenvalue $\lambda_1 = \gamma_1$
is the first eigenvalue $\lambda = \beta_1$ of the Sturm--Liouville problem $\SP_2$ in (\ref{sp1}).

Here we study the conditions for $\beta_1$ to become negative from the analytical properties
of the period function $T_+(p_0,q_0)$, which appears in the boundary-value problem (\ref{bvp-i-2}).
The following two lemmas state properties of $T_+(p_0,q_0)$ with respect to $q_0$ separately for
$p_0 \in (0,p_*]$ and $p_0 \in (p_*,1)$.

\begin{lemma}
\label{monotone-on-left}
For every $p_0 \in (0, p_*]$, $T_+(p_0,q_0)$ is a monotonically decreasing function of $q_0$ in $(0,\infty)$.
\end{lemma}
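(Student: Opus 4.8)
The plan is to prove the stronger pointwise statement that $\partial_{q_0} T_+(p_0,q_0) < 0$ for every $p_0 \in (0,p_*]$ and $q_0 \in (0,\infty)$. First I would record the regularized representation of $T_+$ exactly as in the proof of Lemma~\ref{global-decreasing}: the identity (\ref{integration}) holds on every level curve $E(u,v)=E$, and $E + A(p_*) > 0$ by (\ref{E-positivity}), so for all admissible $(p_0,q_0)$,
\[
\left[E + A(p_*)\right] T_+ = \int_{p_0}^{p_+}\left[3 - \frac{2(A(u)-A(p_*))A''(u)}{[A'(u)]^2}\right] v\, du + \frac{2(A(p_0)-A(p_*))\,q_0}{A'(p_0)}, \qquad v := \sqrt{E+A(u)}.
\]
Here I would exploit the algebraic identity $A(u)-A(p_*) = -\left(u^2-\tfrac12\right)^2$, which turns the bracket into $\tfrac32 + \tfrac{1}{4u^2}$ and the boundary term into $\tfrac{(2p_0^2-1)q_0}{4p_0}$. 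Differentiating in $q_0$ at fixed $p_0$ — the moving endpoint $p_+$ contributing nothing since $v=0$ there, and $\partial_{q_0}\!\int_{p_0}^{p_+}(\cdots)\,v\,du = q_0\!\int_{p_0}^{p_+}(\cdots)\,v^{-1}du$ — I expect to obtain
\[
\left[E + A(p_*)\right]\partial_{q_0} T_+ = -\frac{q_0}{2}\,T_+ + \frac{q_0}{4}\int_{p_0}^{p_+}\frac{du}{u^2 v} + \frac{2p_0^2-1}{4p_0}.
\]
Because $E + A(p_*) > 0$, the claim $\partial_{q_0}T_+ < 0$ is then equivalent to the clean inequality
\[
\int_{p_0}^{p_+}\frac{1-2u^2}{u^2 v}\,du < \frac{1-2p_0^2}{p_0 q_0}.
\]

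The decisive step is to prove this last inequality, and the idea I would use is to split the integral at the critical point $u=p_*$, where the numerator $1-2u^2$ changes sign. On $[p_*,p_+]$ one has $1-2u^2 < 0$, so that part of the integral is negative and can be dropped. On $[p_0,p_*]$ I would use that $A$ is increasing on $(0,p_*)$, so $A(u)\ge A(p_0)$ and hence $v = \sqrt{E+A(u)} \ge \sqrt{E+A(p_0)} = q_0$; since $1-2u^2\ge 0$ there, this gives
\[
\int_{p_0}^{p_+}\frac{1-2u^2}{u^2 v}\,du \;\le\; \int_{p_0}^{p_*}\frac{1-2u^2}{u^2 v}\,du \;\le\; \frac{1}{q_0}\int_{p_0}^{p_*}\frac{1-2u^2}{u^2}\,du \;=\; \frac{1}{q_0}\left(\frac{1}{p_0}+2p_0-2\sqrt2\right).
\]
The target bound then reduces to $\frac1{p_0}+2p_0-2\sqrt2 \le \frac1{p_0}-2p_0$, i.e. to $4p_0\le 2\sqrt2$, which is precisely the hypothesis $p_0\le p_*=\tfrac1{\sqrt2}$. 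Strictness (and hence a strict sign for $\partial_{q_0}T_+$) comes, for $p_0<p_*$, from the discarded strictly negative contribution on $(p_*,p_+)$, and for $p_0=p_*$ directly from $\int_{p_*}^{p_+}\frac{1-2u^2}{u^2 v}\,du<0$.

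The main obstacle is conceptual rather than computational: the integrand in the key inequality is sign-indefinite and the bound is sharp (its left-hand side tends to the right-hand side as $q_0\to\infty$), so no crude global estimate on $v$ can succeed — in particular replacing $v$ by its maximum $\sqrt{E+A(p_*)}$ is far too lossy near $p_*$ and near $p_+$. What makes the argument close is that $p_*$ plays three simultaneous roles: it is the sign change of $1-2u^2$, it is the maximizer of $A$ (which is exactly what yields the one-sided bound $v\ge q_0$ on $[p_0,p_*]$, and only there), and it is the value at which the elementary threshold degenerates into the hypothesis $p_0\le p_*$. A lesser technical point I would check carefully is the legitimacy of differentiating under the integral at the square-root-singular, moving endpoint $p_+$; as in Lemma~\ref{global-decreasing}, this is justified because after multiplication by $E+A(p_*)$ the integrand is regular and the endpoint term vanishes.
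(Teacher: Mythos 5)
Your proposal is correct, and it coincides with the paper's proof up to and including the key identity: the same regularized representation of $T_+$ via (\ref{integration}), the same differentiation in $q_0$ (your displayed formula for $[E+A(p_*)]\partial_{q_0}T_+$ is algebraically identical to (\ref{derivative-E-plus}) after multiplying by $2q_0$ and writing $\tfrac{1}{u^2}-2=\tfrac{1-2u^2}{u^2}$), and the same reduction to the inequality $\int_{p_0}^{p_+}\tfrac{1-2u^2}{u^2 v}\,du < \tfrac{1-2p_0^2}{p_0 q_0}$. Where you diverge is in the final estimate. The paper introduces the reflected point $\tilde p_0=\sqrt{1-p_0^2}\in(p_*,1)$ with $A(\tilde p_0)=A(p_0)$, splits the integral at $p_*$ and $\tilde p_0$, folds the piece over $[p_*,\tilde p_0]$ back onto $[p_0,p_*]$ via the substitution $z=\sqrt{1-u^2}$, and only then bounds the combined integrand using $1-2u^2\le 1-2p_0^2$ and $v\ge q_0$. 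You instead discard the entire (strictly negative) contribution on $[p_*,p_+]$, keep the factor $1-2u^2$ inside the integral on $[p_0,p_*]$, bound only $v\ge q_0$ there, and evaluate the resulting elementary integral exactly, so that the required inequality collapses to $4p_0\le 2\sqrt2$, i.e.\ precisely $p_0\le p_*$. Your route is shorter and makes transparent why the hypothesis $p_0\le p_*$ is exactly what is needed; the paper's folding argument retains more information (it also isolates the negative piece on $[\tilde p_0,p_+]$) but is not needed for this lemma. Your handling of the endpoint case $p_0=p_*$ and of strictness, and your justification of differentiating through the square-root-singular moving endpoint (the boundary term vanishes since $v(p_+)=0$, and the regularized integrand is nonsingular), are all consistent with what the paper does.
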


\begin{proof}
By using the same approach as in the proof of Lemma \ref{global-decreasing}, we write
\begin{eqnarray*}
\left[ E_0(p_0,q_0) + A(p_*) \right] T_+(p_0,q_0) & = &
\int_{p_0}^{p_+} \left[3 - \frac{2 (A(u) - A(p_*)) A''(u)}{[A'(u)]^2} \right] v du +
\frac{2(A(p_0)-A(p_*))q_0}{A'(p_0)},
\end{eqnarray*}
where $E_0(p_0,q_0) = q_0^2 - A(p_0)$ and the integrands are free of singularities.
Compared to Lemma \ref{global-decreasing}, $p_0 \in (0,1)$ and $q_0 \in (0,\infty)$ are
independent parameters. All terms in the representation are $C^1$ functions in $q_0$.
Differentiating in $q_0$ yields the expression
\begin{eqnarray*}
\left[ E_0(p_0,q_0) + A(p_*) \right] \frac{\partial}{\partial q_0} T_+(p_0,q_0) =
q_0 \int_{p_0}^{p_+} \left[1 - \frac{2 (A(u) - A(p_*)) A''(u)}{[A'(u)]^2} \right] \frac{du}{v} + \frac{2 (A(p_0) - A(p_*))}{A'(p_0)},
\end{eqnarray*}
or equivalently
\begin{eqnarray}
\label{derivative-E-plus}
\frac{E_0(p_0,q_0) + A(p_*)}{2q_0} \frac{\partial}{\partial q_0} T_+(p_0,q_0)  & = &
\int_{p_0}^{p_+} \frac{1-2u^2}{8 v u^2} du - \frac{1-2p_0^2}{8p_0 q_0}.
\end{eqnarray}
Recall from (\ref{E-positivity}) that $E_0(p_0,q_0) + A(p_*) > 0$ for every $p_0 \in (0,1)$ and $q_0 \in (0,\infty)$.
If $p_0 = p_*$, the first term in (\ref{derivative-E-plus}) is negative and the second term is zero,
hence $\frac{\partial}{\partial q_0} T_+(p_*,q_0) < 0$.

For any $p_0 \in (0, p_*)$, we intoduce the value $\tilde p_0 \in (p_*, 1)$ by setting $\tilde p_0^2 := 1-p_0^2$.
It follows from (\ref{invariant}) that $A(p_0) = A(\tilde{p}_0)$ with $0<p_0<p_*<\tilde p_0 < p_+ < 1$.
Next, we rewrite the equation (\ref{derivative-E-plus}) as
\begin{eqnarray}
\label{derivative-E-plus-2}
\frac{E_0(p_0,q_0) + A(p_*)}{2q_0} \frac{\partial}{\partial q_0} T_+(p_0,q_0)  & = &
\int_{p_0}^{p_*} \frac{1-2u^2}{8 v u^2} du +
\int_{p_*}^{\tilde p_0} \frac{1-2u^2}{8 v u^2} du \\
& \phantom{t} & +\int_{\tilde p_0}^{p_+} \frac{1-2u^2}{8 v u^2} du -
\frac{1-2p_0^2}{8p_0 q_0}.
\nonumber
\end{eqnarray}
The substitution $z = \sqrt{1-u^2}$ in the second integral implies that
$$
\int_{p_*}^{\tilde p_0} \frac{1-2u^2}{8 v u^2} du =
- \int_{p_0}^{p_*} \frac{(1-2z^2)z}{8 v (1-z^2)^{3/2}} dz.
$$
Substituting this equation into (\ref{derivative-E-plus-2}) and calling $z$ as $u$ again, we get
\begin{eqnarray}
\label{derivative-E-plus-3}
\frac{E_0(p_0,q_0) + A(p_*)}{2q_0} \frac{\partial}{\partial q_0} T_+(p_0,q_0)  & = &
\int_{p_0}^{p_*} \frac{1-2u^2}{8 v} \left(\frac{1}{u^2} -
\frac{u}{(1-u^2)^{3/2}} \right) du \\
& \phantom{t} & +\int_{\tilde p_0}^{p_+} \frac{1-2u^2}{8 v u^2} du - \frac{1-2p_0^2}{8p_0 q_0}.
\nonumber
\end{eqnarray}
The second term in the right-hand side of (\ref{derivative-E-plus-3}) is negative since $\tilde p_0 \in (p_*, p_+)$,
whereas the first and last terms satisfy
\begin{eqnarray*}
&& \qquad \int_{p_0}^{p_*} \frac{1-2u^2}{8 v} \left(\frac{1}{u^2} -
\frac{u}{(1-u^2)^{3/2}} \right) du  - \frac{1-2p_0^2}{8p_0 q_0} \\
&& \leq \frac{1-2p_0^2}{8 q_0} \int_{p_0}^{p_*}  \left(\frac{1}{u^2} - \frac{u}{(1-u^2)^{3/2}} \right) du - \frac{1-2p_0^2}{8p_0 q_0} \\
&& = \frac{(1-2p_0^2)}{8 q_0} \left[ \frac{1}{\sqrt{1-p_0^2}} - \frac{2}{p_*} \right],
\end{eqnarray*}
which is negative since $p_* < \tilde p_0 = \sqrt{1 - p_0^2}$.
As a result, the entire right-hand side of (\ref{derivative-E-plus-3}) is negative,
hence $\frac{\partial}{\partial q_0} T_+(p_0,q_0) < 0$ for $p_0 \in (0,p_*)$.
\end{proof}

\begin{lemma}
\label{nonmonotone-on-right}
For every $p_0 \in (p_*, 1)$,
$T_+(p_0,q_0)$ is a non-monotone function of $q_0$ in $(0,\infty)$ such that
$T_+(p_0,q_0) \to 0$ as $q_0 \to 0$ and $q_0 \to \infty$.
\end{lemma}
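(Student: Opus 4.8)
The plan is to establish the two boundary limits $T_+(p_0,q_0)\to 0$ as $q_0\to 0$ and as $q_0\to\infty$, and then to deduce non-monotonicity from positivity and continuity of $T_+(p_0,\cdot)$ on $(0,\infty)$. Fix $p_0\in(p_*,1)$ and recall $E=q_0^2-A(p_0)$ with turning point $p_+>p_0$ solving $E+A(p_+)=0$. For the limit $q_0\to 0$, we have $E<0$, so the orbit lies inside the homoclinic loop with $p_+\in(p_*,1)$; since $A$ is strictly decreasing on $(p_*,1)$, it follows that $p_+>p_0$ and $p_+\to p_0$ as $q_0\to 0$. Writing $E+A(u)=A(u)-A(p_+)\ge m\,(p_+-u)$ with $m:=\min_{[p_0,p_+]}|A'|>0$ gives
\begin{equation*}
T_+(p_0,q_0)\le\int_{p_0}^{p_+}\frac{du}{\sqrt{m\,(p_+-u)}}=\frac{2}{\sqrt m}\,\sqrt{p_+-p_0}\longrightarrow 0 .
\end{equation*}

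The limit $q_0\to\infty$ is the main obstacle, since the turning point then crosses $u=1$ and the orbit leaves the homoclinic loop ($E>0$, $p_+>1$), so the estimate above no longer applies. To handle this regime I would factor the radicand: setting $w=u^2$, the quartic $u^4-u^2-E$ has roots $w=\tfrac12(1\pm\sqrt{1+4E})$, so for $E>0$
\begin{equation*}
E+A(u)=-(u^4-u^2-E)=(p_+^2-u^2)(u^2+r^2),\qquad r^2:=\tfrac12\bigl(\sqrt{1+4E}-1\bigr)>0 .
\end{equation*}
Since $u^2+r^2\ge r^2$ on $[p_0,p_+]$, this yields
\begin{equation*}
T_+(p_0,q_0)\le\frac1r\int_{p_0}^{p_+}\frac{du}{\sqrt{p_+^2-u^2}}=\frac1r\Bigl(\frac\pi2-\arcsin\frac{p_0}{p_+}\Bigr)\le\frac{\pi}{2r},
\end{equation*}
and since $E\to\infty$ forces $r\to\infty$ as $q_0\to\infty$, we conclude $T_+\to 0$.

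Finally I would deduce non-monotonicity. For every $q_0\in(0,\infty)$ the interval $[p_0,p_+]$ is nondegenerate and the integrand is positive, so $T_+(p_0,q_0)>0$; moreover $T_+(p_0,\cdot)$ is continuous (indeed $C^1$) on $(0,\infty)$ because, for $p_0\in(p_*,1)$, the singularity-free representation used in the proof of Lemma \ref{monotone-on-left} remains valid for all $q_0$: one has $A'(u)\ne 0$ on $[p_0,p_+]\subset(p_*,\infty)$ and $E+A(p_*)>0$ by (\ref{E-positivity}), so the regularized integrand stays bounded across the transition $E=0$ and the turning point $p_+$ depends smoothly on $q_0$. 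A positive continuous function on $(0,\infty)$ that tends to $0$ at both endpoints must attain a positive interior maximum, and hence cannot be monotone. This yields the non-monotonicity of $T_+(p_0,q_0)$ in $q_0$ together with the claimed boundary behaviour.
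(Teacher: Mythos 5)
Your proof is correct and follows the same overall strategy as the paper: establish the two boundary limits of $T_+(p_0,\cdot)$ and deduce non-monotonicity from positivity and continuity. The only substantive difference is in the $q_0\to\infty$ estimate. The paper bounds the integral by extending the lower limit to $0$, substituting $u=p_+x$, and integrating by parts to show $T_+ \le \frac{1}{p_+}\int_0^1 \frac{A(x)\,dx}{\sqrt{1-x}} = O(1/p_+)$; you instead use the exact factorization $E+A(u)=(p_+^2-u^2)(u^2+r^2)$ of the quartic and drop the second factor to get the cleaner closed-form bound $T_+\le \pi/(2r)$ with $r\to\infty$. Both are elementary and correct; yours avoids the integration by parts and makes the rate explicit in $E$ rather than in $p_+$. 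Your treatment of the $q_0\to 0$ limit (mean value theorem giving $T_+\le \frac{2}{\sqrt m}\sqrt{p_+-p_0}$, with $m=\min|A'|>0$ available because $[p_0,p_+]$ stays away from the critical point $p_*$) is a quantitative version of the paper's remark that the integration length shrinks while the integrand stays integrable, and your final step spelling out that a positive continuous function on $(0,\infty)$ vanishing at both ends cannot be monotone makes explicit what the paper leaves implicit; the appeal to the singularity-free representation from Lemma \ref{monotone-on-left} for continuity across $E=0$ is legitimate since $A'$ does not vanish on $[p_0,p_+]\subset(p_*,\infty)$ when $p_0>p_*$.
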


\begin{proof}
First we claim that $T_+(p_0,q_0) \to 0$ as $q_0 \to 0$.
Indeed, if $q_0 = 0$, the only admissible root for $p_+ \geq p_0$ in
the nonlinear equation (\ref{p+p-}) is $p_+ = p_0$.
Hence, as $q_0 \to 0$, the length of integration in $T_+(p_0,q_0)$ given by (\ref{period}) shrinks to zero
whereas the integrand remains absolutely integrable so that $T_+(p_0,q_0) \to 0$ as $q_0 \to 0$.

Next, we claim that $T_+(p_0,q_0) \to 0$ as $q_0 \to \infty$.
By (\ref{period}) and (\ref{p+p-}), we bound $T_+(p_0, q_0)$ as in
$$
T_+(p_0, q_0) = \int_{p_0}^{p_+} \frac{du}{\sqrt{E + u^2-u^4}}
\leq \int_{0}^{p_+} \frac{du}{\sqrt{u^2-u^4-p_+^2+p_+^4}}.
$$
By change of variables $u = p_+ x$, we rewrite the estimate as
\begin{equation}
\label{asym-int}
T_+(p_0, q_0)
\leq \frac{1}{p_+} \int_{0}^{1}
\frac{dx}{\sqrt{1-x^2}\sqrt{1+x^2 - \frac{1}{p_+^2}}}.
\end{equation}
We define $A(x):=\frac{1}{\sqrt{(1+x)(1+x^2 - 1/p_+^2)}}$, and using the integration by parts, we rewrite the integral in
(\ref{asym-int}) as
$$
\int_{0}^{1}
\frac{A(x) dx}{\sqrt{1-x}} = \left[ -2\sqrt{1-x}A(x) \right]\Big|_{0}^1 +
2 \int_0^1 \sqrt{1-x}A'(x) dx,
$$
which is finite for $p_+ > 1$ since $A(x)$ is continuously differentiable on $[0,1]$ for $p_+ > 1$.
Since for fixed $p_0$, we have $p_+ \to \infty$ as $q_0 \to \infty$, the representation (\ref{asym-int}) implies that
$$
T_+(p_0, q_0)
\leq \frac{1}{p_+} \int_{0}^{1} \frac{A(x) dx}{\sqrt{1-x}} \to 0
$$
as $q_0 \to \infty$.
\end{proof}

The following lemma defines the necessary and sufficient condition for the first eigenvalue $\beta_1$ of the Sturm--Liouville problem $\SP_2$ to cross zero when the parameter $\epsilon$ is increased. This condition
is given by the intersection of two curves $C_1$ and $C_2$ given by 
\begin{equation}
\label{curve-1}
C_1 := \left\{ (p_0,q_0) : \quad \frac{\partial T_+}{\partial q_0}(p_0,q_0) = 0, \quad p_0 \in (p_*,1) \right\}
\end{equation}
and
\begin{equation}
\label{curve-2}
C_2 := \left\{ (p_0,q_0) : \quad q_0 = \frac{1}{2N} \sqrt{A(p_0)}, \quad p_0 \in (0,1) \right\}.
\end{equation}
The uniqueness of $C_2$ is obvious (see red curve on Fig. \ref{fig-plane}). 
In the subsequent lemmas, we will also prove that $C_1$ is also uniquely defined.

\begin{lemma}
\label{beta-0-cond}
Let $s(z; p_0, q_0)$ be the even solution to the differential equation (\ref{eqn-s}).
Then,
$$
s(\pm T_+(p_0, q_0); p_0, q_0) = 0 \;\; \mbox{\rm if and only if} \;\;
\frac{\partial T_+}{\partial q_0}(p_0, q_0) = 0.
$$
Moreover, the first eigenvalue
$\lambda = \beta_1$ of the Sturm--Liouville problem $\SP_2$ is zero if and only if
$\frac{\partial T_+}{\partial q_0}(p_0, q_0) = 0$ at $q_0 = \frac{1}{2N} \sqrt{A(p_0)}$.
\end{lemma}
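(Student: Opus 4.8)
The plan is to prove the two equivalences separately: first the endpoint condition $s(\pm T_+)=0 \Leftrightarrow \partial_{q_0}T_+=0$, which is a clean chain-rule identity, and then the identification of the corresponding Dirichlet eigenvalue as the \emph{first} one $\beta_1$, which is where the real work lies. For the first claim I would differentiate the defining boundary relation. Since $u(z;p_0,q_0)$ solves (\ref{bvp-i-2}), the identity $u(T_+(p_0,q_0);p_0,q_0)=p_0$ holds for all $q_0$ at fixed $p_0$. Differentiating in $q_0$, using $\partial_z u(T_+;p_0,q_0)=u'(T_+)=-q_0$ and $\partial_{q_0}u=s$, gives
\[
s(T_+(p_0,q_0);p_0,q_0)=q_0\,\frac{\partial T_+}{\partial q_0}(p_0,q_0).
\]
Because $q_0>0$ and $s$ is even, this yields $s(\pm T_+;p_0,q_0)=0$ if and only if $\partial_{q_0}T_+=0$.

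For the second claim I would first record that on the curve $C_2$ one has $T_+(p_0,q_0)=\mathcal{T}(p_0)=\pi\epsilon$, so $(-T_+,T_+)=(-\pi\epsilon,\pi\epsilon)$ and the zero-eigenvalue equation of $\SP_2$ is exactly (\ref{eqn-s}), whose unique (up to scale) even solution is $s$. The converse direction is then immediate: by Proposition \ref{SLP-eigenfunctions} the first Dirichlet eigenfunction is even, so if $\beta_1=0$ it lies in the one-dimensional space of even solutions of (\ref{eqn-s}) and is therefore proportional to $s$ (which is nonzero since $s(0)=\partial_{q_0}p_+>0$); hence $s(\pm\pi\epsilon)=0$ and, by the first part, $\partial_{q_0}T_+=0$.

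The main obstacle is the forward direction: assuming $\partial_{q_0}T_+=0$ we get $s(\pm T_+)=0$, so $\lambda=0$ is a Dirichlet eigenvalue with an even eigenfunction, but a priori this could be $\beta_1,\beta_3,\dots$, and I must show it is the first one, i.e. that $s$ has no interior zeros. Here I would bring in the odd solution $r(z):=u'(z)$ of (\ref{eqn-s}) used in Lemma \ref{second-pos}, which satisfies $r<0$ on $(0,T_+)$, $r(T_+)=u'(T_+)=-q_0$, and $r'(0)=u''(0)=(1-2p_+^2)p_+<0$. Since $s$ and $r$ solve the same linear equation, their Wronskian $W=sr'-s'r$ is constant, and evaluating at $z=0$ (where $s'(0)=0$, $r(0)=0$) gives $W=s(0)\,u''(0)<0$ because $s(0)=\partial_{q_0}p_+>0$. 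I would then study $\rho:=s/r$ on $(0,T_+)$, where $r$ is nonvanishing: one computes $\rho'=-W/r^2>0$, so $\rho$ is strictly increasing, while $\rho\to-\infty$ as $z\to0^+$ (since $s(0)>0$ and $r\to0^-$) and $\rho(T_+)=s(T_+)/(-q_0)=0$ at the critical point.

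Consequently $\rho<0$ throughout $(0,T_+)$, and since $r<0$ there, $s>0$ on $[0,T_+)$ with $s(T_+)=0$. By evenness, $s$ is strictly positive on $(-\pi\epsilon,\pi\epsilon)$ and vanishes at the endpoints, so it is the first Dirichlet eigenfunction and $\beta_1=0$. This Wronskian comparison with $u'$, pinning the zero crossing to the first eigenvalue rather than a higher even one, is the crux of the argument; the rest is the routine chain rule and the one-dimensionality of the even solution space.
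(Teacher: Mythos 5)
Your proof is correct, and its skeleton coincides with the paper's: the same chain-rule differentiation of the boundary identity $u(\mp T_+(p_0,q_0);p_0,q_0)=p_0$ yields $s(\pm T_+;p_0,q_0)=q_0\,\partial_{q_0}T_+$, and the converse direction (from $\beta_1=0$ to $\partial_{q_0}T_+=0$) is handled identically via the one-dimensionality of the even solution space of (\ref{eqn-s}). Where you genuinely diverge is the forward direction: the paper simply asserts that $s(\pm\pi\epsilon)=0$ makes $s$ the eigenfunction of $\SP_2$ with $\beta_1=0$, a step that is only justified indirectly — by Proposition \ref{SLP-eigenfunctions} the even Dirichlet eigenfunctions correspond to $\beta_1,\beta_3,\dots$, and since Lemma \ref{second-pos} gives $\beta_2>0$ (hence $\beta_3>0$), the only even eigenvalue that can vanish is $\beta_1$. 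You instead prove directly that $s$ has no interior zeros, via the constant Wronskian $W=s(0)u''(0)<0$ of $s$ with the odd solution $r=u'$ and the monotonicity of $\rho=s/r$ on $(0,T_+)$, concluding $s>0$ on $(-\pi\epsilon,\pi\epsilon)$. This costs a few extra lines but buys a self-contained positivity statement for $s$ that does not lean on the ordering $\beta_2>0$; the paper's route is shorter but leaves that inference implicit. Both arguments are sound, and your Wronskian computation checks out: $s'(0)=0$, $r(0)=0$ give $W=s(0)u''(0)$ with $s(0)=\partial_{q_0}p_+>0$ and $u''(0)=p_+(1-2p_+^2)<0$, and $r<0$ on $(0,T_+]$ by Remark \ref{on-u-v}, so $\rho'=-W/r^2>0$ and $\rho$ increases from $-\infty$ to $\rho(T_+)=0$ as claimed.
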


\begin{proof}
Since $u(z;p_0,q_0)$ satisfying (\ref{bvp-i-2}) and $s(z;p_0,q_0)$ satisfying
(\ref{eqn-s}) are even, it is sufficient to consider the left boundary condition at
$z = -T_+(p_0,q_0)$ rewritten again as
\begin{equation}
\label{bc-p0-q0}
\left\{
\begin{array}{l} u(-T_+(p_0, q_0); p_0, q_0) = p_0, \\
u'(-T_+(p_0, q_0); p_0, q_0) = q_0.
\end{array}\right.
\end{equation}
We differentiate the first equation in (\ref{bc-p0-q0}) with respect to $q_0$ and obtain
\begin{equation}
\label{tech-der-T}
\partial_{q_0} u(-T_+(p_0, q_0); p_0, q_0) - u'(-T_+(p_0, q_0); p_0, q_0)
\frac{\partial}{\partial q_0} T_+(p_0, q_0) = 0.
\end{equation}
By using the definition of $s(z;p_0,q_0)$ and
the second equation in (\ref{bc-p0-q0}), we rewrite (\ref{tech-der-T}) in the form:
\begin{equation}
\label{s-boundary}
s(-T_+(p_0, q_0); p_0, q_0) =
q_0 \frac{\partial}{\partial q_0} T_+(p_0, q_0).
\end{equation}
Since $q_0 \in (0,\infty)$, it follows from (\ref{s-boundary}) that
$s(-T_+(p_0, q_0); p_0, q_0) =0$ if and only if
$\frac{\partial T_+}{\partial q_0}(p_0, q_0) = 0$.

If $q_0 = \frac{1}{2N} \sqrt{A(p_0)}$, then we have $T_+(p_0,q_0) = \mathcal{T}(p_0) = \pi \epsilon$ so that
the differential equation (\ref{eqn-s}) coincides with that in the Sturm--Liouville problem
$\SP_2$ with $\lambda = 0$ in (\ref{sp1}). If $\frac{\partial T_+}{\partial q_0}(p_0, q_0) = 0$ for this $q_0$,
then it follows from (\ref{s-boundary}) that $s(\pm \pi \epsilon; p_0, q_0) = 0$,
hence $s(z;p_0,q_0)$ with this $q_0$ is the eigenfunction of $\SP_2$ with $\beta_1 = 0$.
On the other hand, if $\beta_1 = 0$, then the corresponding eigenfunction is even and hence
it coincides up to a scalar multiplication with $s(z;p_0,q_0)$ for this $q_0$
by uniqueness of solutions of the second-order differential equations. Then,
it follows from (\ref{s-boundary}) that $\frac{\partial}{\partial q_0} T_+(p_0,q_0) = 0$ for this $q_0$.
\end{proof}

The following lemma ensures that there is only one critical (maximum) point of
$T_+(p_0,q_0)$ with respect to $q_0$ at each energy level $E_0(p_0,q_0) = q_0^2 - A(p_0)$.

\begin{lemma}
\label{no-double-roots}
Let $E(p,q) = q^2-A(p)$ be the first-order invariant for the boundary-value problem
(\ref{bvp-i-2}). There are no distinct points
$(p_1, q_1)$ and $(p_2, q_2)$  in $(0,1)\times (0,\infty)$ with
$E(p_1, q_1) = E(p_2, q_2)$ such that
$\frac{\partial T_+}{\partial q_1}(p_1, q_1) = 0$ and
$\frac{\partial T_+}{\partial q_2}(p_2, q_2) = 0$.
\end{lemma}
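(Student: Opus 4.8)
The plan is to argue by contradiction and to exploit the fact that two distinct points lying on a common level curve generate one and the same phase-plane orbit, hence one and the same even solution of the Jacobi equation (\ref{eqn-s}). Suppose $(p_1,q_1)$ and $(p_2,q_2)$ are distinct, satisfy $E(p_1,q_1)=E(p_2,q_2)=:E$, and both satisfy $\partial_{q_0}T_+=0$. By Lemma \ref{monotone-on-left}, $\partial_{q_0}T_+<0$ for $p_0\in(0,p_*]$, so necessarily $p_1,p_2\in(p_*,1)$. Since $A$ is strictly decreasing on $(p_*,\infty)$ and $A(p_+)=A(p_0)-q_0^2<A(p_0)$ by (\ref{p+p-}), the upper turning point obeys $p_+>p_0$, and $p_+$ is determined by $E$ alone. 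Consequently the even solution of the autonomous equation in (\ref{bvp-i-2}), normalized by $u(0)=p_+$, $u'(0)=0$, is independent of the point; I denote this common orbit by $U(\cdot;E)$, so that $u(z;p_1,q_1)=u(z;p_2,q_2)=U(z;E)$ as functions of $z$, and $T_+(p_i,q_i)$ is the unique positive time with $U(T_+(p_i,q_i);E)=p_i$.

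Next I would record two solutions of the Jacobi equation (\ref{eqn-s}) along $U(\cdot;E)$. The function $r(z):=U'(z)$ is an odd solution with $r'(0)=(1-2p_+^2)p_+\neq 0$, while $s(z;p_i,q_i)=\partial_{q_0}u$ is an even solution with $s(0;p_i,q_i)=\partial_{q_0}p_+\neq 0$ (established in the proof of Lemma \ref{gamma-2-nonzero}). Because the space of even solutions of (\ref{eqn-s}) is one-dimensional, $s(\cdot;p_1,q_1)$ and $s(\cdot;p_2,q_2)$ are nonzero scalar multiples of a single even solution $S$ and hence share the same zeros. By the identity (\ref{s-boundary}) together with $q_0>0$, the condition $\partial_{q_0}T_+(p_i,q_i)=0$ is equivalent to $S(T_+(p_i,q_i))=0$. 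Thus $T_+(p_1,q_1)$ and $T_+(p_2,q_2)$ are both zeros of the single even function $S$, and they are distinct: at fixed $E$ the map $p_0\mapsto T_+=\int_{p_0}^{p_+}du/\sqrt{E+A(u)}$ is strictly decreasing in its lower limit, so $p_1\neq p_2$ forces $T_+(p_1,q_1)\neq T_+(p_2,q_2)$.

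It then remains to forbid two distinct zeros of $S$ in the admissible range, which is precisely where Sturm separation enters. Let $(0,Z)$ be the maximal interval on which $U(\cdot;E)$ is strictly decreasing, i.e.\ $r=U'<0$; this is $(0,T_{\mathrm{half}})$ for a periodic orbit and $(0,\infty)$ on the homoclinic level $E=0$. Since $p_i\in(p_*,p_+)$ and $U$ decreases from $p_+$, both times $T_+(p_i,q_i)$ lie in $(0,Z)$. On $(0,Z)$ the odd solution $r$ has no interior zero, while $S$ and $r$ are linearly independent (one even and one odd, both nontrivial; their Wronskian at $0$ equals $S(0)r'(0)\neq0$). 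By Sturm's separation theorem, between any two zeros of $S$ there must lie a zero of $r$, so $S$ has at most one zero in $(0,Z)$. This contradicts the existence of the two distinct zeros $T_+(p_1,q_1),T_+(p_2,q_2)\in(0,Z)$, which completes the proof.

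As for where the difficulty sits, the computational content is light, and no new integral estimates are needed beyond the derivative formula already available from (\ref{derivative-E-plus}) and the relation (\ref{s-boundary}). The genuine step is the structural observation that fixing the level curve collapses the two a priori independent critical-point conditions into the vanishing of one even Jacobi solution at two distinct interior times, after which the non-oscillation of $U'$ on a monotonicity interval—through Sturm separation against $U'$—is exactly the tool that rules out two such zeros. The points needing care are the verification that both times $T_+(p_i,q_i)$ genuinely fall inside the monotonicity interval $(0,Z)$ and that $S$ is a nontrivial even solution with $S(0)\neq 0$, both of which follow from $p_i>p_*$ and from $\partial_{q_0}p_+\neq 0$.
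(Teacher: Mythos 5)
Your proof is correct and follows the same strategy as the paper: both arguments observe that on a common level curve the two critical-point conditions collapse, via the identity (\ref{s-boundary}), into two distinct zeros of a single even solution of the Jacobi equation (\ref{eqn-s}) on the monotone branch of the orbit, and then rule this out using the non-vanishing of $u'$ there. The only difference is the endgame: you invoke Sturm separation directly against the odd solution $r=U'$, whereas the paper packages the same fact as an eigenvalue count for the Dirichlet problem on $[-T_+(p_1,q_1),T_+(p_1,q_1)]$ (two interior zeros force the third eigenvalue to be nonpositive, contradicting Lemma \ref{second-pos}); these are equivalent, and your version slightly shortens the chain of references.
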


\begin{proof}
Assume that such points $(p_1, q_1)$ and $(p_2, q_2)$ in
$(0,1) \times (0,\infty)$ do exist, and pick $p_1<p_2$ without loss of generality.
Then, we have
$\frac{\partial T_+}{\partial q_1}(p_1, q_1) = 0$ and
$\frac{\partial T_+}{\partial q_2}(p_2, q_2) = 0$.
For $j \in \{1,2\}$, consider the boundary-value problem (\ref{bvp-i-2})
with the boundary values $(p_j, q_j)$. By Lemma \ref{beta-0-cond},
we know that $s(z;p_j, q_j)$ is a solution to the differential equation (\ref{eqn-s})
such that $s(\pm T_+(p_j, q_j); p_j, q_j) = 0$, hence $s(z;p_j,q_j)$ is the
eigenfunction of the corresponding Sturm--Liouville problem.

Since $E(p_1, q_1) = E(p_2, q_2)$ and $p_1<p_2$ by assumption, we have $u(z; p_1, q_1) = u(z; p_2, q_2)$ for all
$z\in [-T_+(p_2, q_2), T_+(p_2, q_2)]$. Then, the function $s(z; p_1, q_1)$
is proportional to a solution to the initial-value problem (\ref{ivp-interval}) for $\beta = 0$
on $[-T_+(p_1, q_1), T_+(p_1, q_1)]$, where it vanishes at least at two internal points
$\pm T_+(p_2, q_2)$. By Proposition \ref{SLP-eigenfunctions}, $s(z; p_1, q_1)$ is
the eigenfunction of the Sturm--Liouville problem corresponding to (at least)
the third eigenvalue of $\SP_2$, which implies that the second eigenvalue $\beta_2$ is negative.
However, this contradicts to Lemma \ref{second-pos} which ensures that $\beta_2 > 0$.
Hence, no two distinct points exist as in the assertion of the lemma.
\end{proof}

By Lemma \ref{nonmonotone-on-right},
there exists at least one local maximum of $T_+(p_0,q_0)$ in $q_0$ for $p_0 \in (p_*,1)$. Let us denote
the corresponding value of $q_0$ by $\qmax (p_0)$. Since $T_+(p_0,q_0)$ is a $C^1$ function of $(p_0,q_0)$ in $(0,1) \times (0,\infty)$,
$\qmax$ is a continuous function of $p_0$. The following lemma shows that $\qmax(p_0)$ is the unique critical point of $T_+(p_0, q_0)$ in $q_0$
inside $(0,\sqrt{A(p_0)})$. This given uniqueness of the curve $C_1$ defined by (\ref{curve-1}).

\begin{lemma}
\label{behaviour-qmax}
There exists $p_{**} \in (p_*, 1)$ such that for every $p_0 \in (p_*, p_{**})$, there is exactly one critical point of $T_+(p_0, q_0)$ in $q_0$
inside $(0, \sqrt{A(p_0)})$. For $p_0 \in [p_{**}, 1)$, $T_+(p_0, q_0)$ has no critical points in
$q_0$ inside $(0, \sqrt{A(p_0)})$.
\end{lemma}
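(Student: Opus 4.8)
The plan is to characterize the critical points of $q_0\mapsto T_+(p_0,q_0)$ spectrally and then exploit a strict monotonicity of the associated eigenvalue at those points. Fix $p_0\in(p_*,1)$ and, for $q_0\in(0,\infty)$, let $\beta_1(p_0,q_0)$ be the first eigenvalue of the Dirichlet problem for $-\partial_z^2+1-6u(\cdot;p_0,q_0)^2$ on $(-T_+(p_0,q_0),T_+(p_0,q_0))$, with $u(\cdot;p_0,q_0)$ solving (\ref{bvp-i-2}). By Lemma \ref{beta-0-cond}, $\partial_{q_0}T_+(p_0,q_0)=0$ if and only if the even solution $s(\cdot;p_0,q_0)$ of (\ref{eqn-s}) vanishes at $\pm T_+$. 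First I would upgrade this to
\[
\frac{\partial T_+}{\partial q_0}(p_0,q_0)=0 \quad\Longleftrightarrow\quad \beta_1(p_0,q_0)=0 .
\]
Indeed, if $s(\pm T_+)=0$ then $s$ is a Dirichlet eigenfunction at $\lambda=0$ with $s(0)=\partial_{q_0}p_+>0$; being even, any interior zero of $s$ would force at least two interior zeros, so by Proposition \ref{SLP-eigenfunctions} the value $\lambda=0$ would be the third or higher eigenvalue and hence $\beta_2<0$, contradicting the argument of Lemma \ref{second-pos} (which applies verbatim to (\ref{bvp-i-2}) for every $(p_0,q_0)$). Thus $s$ has no interior zeros and $\beta_1=0$; the converse holds because an even eigenfunction at $\beta_1=0$ must be proportional to $s$, whence $s(\pm T_+)=0$.

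The main obstacle is to show that $q_0\mapsto\beta_1(p_0,q_0)$ has at most one zero on $(0,\infty)$, and I would extract this from the sign of its derivative at any zero. Using the smooth dependence of the simple eigenvalue $\beta_1$ on the potential and on the moving endpoints $\pm T_+$ (Hadamard's variation), one has
\[
\frac{\partial\beta_1}{\partial q_0}=-12\int_{-T_+}^{T_+}u\,\bigl(\partial_{q_0}u\bigr)\,\psi_1^2\,dz-\frac{\partial T_+}{\partial q_0}\Bigl[\psi_1'(T_+)^2+\psi_1'(-T_+)^2\Bigr],
\]
where $\psi_1>0$ is the normalized first eigenfunction. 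At a zero of $\partial_{q_0}T_+$ the boundary term vanishes and, by the equivalence above, $\psi_1$ is proportional to the strictly positive even solution $s$; since $u>0$ as well,
\[
\left.\frac{\partial\beta_1}{\partial q_0}\right|_{\partial_{q_0}T_+=0}=-\frac{12}{\|s\|_{L^2}^2}\int_{-T_+}^{T_+}u\,s^3\,dz<0 .
\]
Hence every zero of $\beta_1(p_0,\cdot)$ is a strict downward crossing, so there is at most one. Together with Lemma \ref{nonmonotone-on-right}, which furnishes at least one critical point, this shows that for each $p_0\in(p_*,1)$ the map $T_+(p_0,\cdot)$ has exactly one critical point $\qmax(p_0)$ on $(0,\infty)$, necessarily a maximum since $T_+\to0$ at both ends, so the locus $C_1$ is a graph over $p_0$.

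It remains to locate $\qmax(p_0)$ relative to the homoclinic boundary $q_0=\sqrt{A(p_0)}$. I would evaluate (\ref{derivative-E-plus}) on that boundary, where $E_0=0$, $p_+=1$ and $v=u\sqrt{1-u^2}$, to find that the sign of $g(p_0):=\partial_{q_0}T_+(p_0,\sqrt{A(p_0)})$ equals the sign of
\[
\int_{p_0}^{1}\frac{1-2u^2}{u^3\sqrt{1-u^2}}\,du-\frac{1-2p_0^2}{p_0^2\sqrt{1-p_0^2}} .
\]
As $p_0\to p_*^+$ the subtracted quantity vanishes while the integral is strictly negative, so $g<0$; as $p_0\to1^-$ the subtracted quantity tends to $-\infty$ and the integral tends to $0$, so $g>0$. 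A zero of $g$ is precisely a critical point of $T_+$ at the energy level $E_0=0$, and Lemma \ref{no-double-roots} permits at most one such point; hence $g$ vanishes at a unique $p_{**}\in(p_*,1)$, with $g<0$ on $(p_*,p_{**})$ and $g>0$ on $(p_{**},1)$. Since $T_+(p_0,\cdot)$ increases up to $\qmax(p_0)$ and decreases afterwards, $g(p_0)<0$ is equivalent to $\qmax(p_0)<\sqrt{A(p_0)}$. Therefore for $p_0\in(p_*,p_{**})$ the unique critical point $\qmax(p_0)$ lies in $(0,\sqrt{A(p_0)})$, while for $p_0\in[p_{**},1)$ we have $\qmax(p_0)\ge\sqrt{A(p_0)}$ and $T_+(p_0,\cdot)$ has no critical point in the open interval, which is the assertion.
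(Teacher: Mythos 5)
Your proposal is correct, but it takes a genuinely different route from the paper's for the key uniqueness step. The paper's proof works directly with $\qmax(p_0)$: it integrates the critical-point identity (\ref{crit-point-cond}) by parts to obtain the explicit lower bound $\qmax(p_0)>p_0\sqrt{2p_0^2-1}$, which forces $\qmax(p_0)>\sqrt{A(p_0)}$ once $p_0>\sqrt{2/3}$; it then shows by contradiction that $\qmax(p_0)\to 0$ as $p_0\to p_*$, and concludes by continuity of $\qmax$ together with Lemma \ref{no-double-roots}. You instead upgrade Lemma \ref{beta-0-cond} to the equivalence $\partial_{q_0}T_+=0\Leftrightarrow\beta_1=0$ (using the argument of Lemma \ref{second-pos}, which indeed applies to (\ref{bvp-i-2}) for every $(p_0,q_0)$ since $p_+>p_*$ always), prove via the Hadamard variation formula that every zero of $q_0\mapsto\beta_1(p_0,q_0)$ is a strict downward crossing (the boundary term vanishes exactly at critical points and the remaining term $-12\|s\|^{-2}\int u s^3$ is sign-definite), and then locate the resulting unique critical point relative to the homoclinic boundary by the sign of $\partial_{q_0}T_+(p_0,\sqrt{A(p_0)})$, invoking Lemma \ref{no-double-roots} only on the level $E=0$. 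Your eigenvalue-monotonicity step is a genuine strengthening: Lemma \ref{no-double-roots} only excludes two critical points on the \emph{same} energy level, while two critical points at the same $p_0$ but different $q_0$ sit on \emph{different} levels, so the paper's uniqueness-for-fixed-$p_0$ implicitly rests on the single-valuedness and continuity of $\qmax$; your argument closes that gap cleanly. What the paper's computation buys in exchange is quantitative information that is used downstream: $p_{**}<\sqrt{2/3}$ and, more importantly, $\qmax(p_0)\to 0$ as $p_0\to p_*$, which is what guarantees in Lemma \ref{no-double-intersections} that the curve $C_1$ starts below $C_2$ and therefore must intersect it. If your proof were substituted for the paper's, that limit would still have to be established separately.
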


\begin{proof}
Let $\qmax (p_0)$ be the point of maximum of $T_+(p_0,q_0)$ in $q_0$ for $p_0 \in (p_*,1)$.
We first show that $\qmax(p_0) \to 0$ as $p_0 \to p_*$ and
$\qmax(p_0) > \sqrt{A(p_0)}$ for $p_0$ near $1$.

It follows from (\ref{derivative-E-plus}) that if
$\frac{\partial}{\partial q_0} T_+(p_0, \qmax (p_0)) = 0$,
then on the energy level $E = E_0(p_0,\qmax (p_0))$ we have
\begin{equation}
\label{crit-point-cond}
\qmax(p_0) \int_{p_0}^{p_+} \frac{2u^2-1}{ v u^2}  du =
\frac{2p_0^2-1}{p_0}.
\end{equation}
Integration by parts with the help of
\begin{equation}
\label{integr-by-parts}
d \left( \frac{v}{u^3} \right) = -\frac{2u^2-1}{vu^2}du - \frac{3v}{u^4}du
\end{equation}
yields
\begin{equation}
\label{crit-point-cond-2}
\qmax(p_0)^2 - (2p_0^2-1)p_0^2 =
3p_0^3 \qmax (p_0) \int_{p_0}^{p_+} \frac{v}{u^4}  du > 0.
\end{equation}
This gives the lower bound for $\qmax (p_0)$ as
$$
\qmax (p_0) > p_0 \sqrt{2 p_0^2 -1}.
$$
Recall that $\sqrt{A(p_0)} = p_0 \sqrt{1- p_0^2}$. Hence,
$\qmax(p_0) > \sqrt{A(p_0)}$ if $p_0 > \sqrt{2/3}$. By continuity of
$\qmax$ and Lemma \ref{no-double-roots}, there exists unique $p_{**} \in (p_*, \sqrt{2/3})$ such that $\qmax(p_{**}) = \sqrt{A(p_*)}$.

To prove that $\qmax(p_0) \to 0$ as $p_0 \to p_*$, we assume the contrary. That is, let $\qmax(p_0)>\epsilon$ for some $\epsilon>0$ whenever
$0<p_0-p_*<\delta_0$ with sufficiently small $\delta_0>0$. Then, there is some positive $\delta_1$ such that $p_+ > p_0 + \delta_1$.
Then,
\begin{equation}
\label{inter-qmax}
\int_{p_0}^{p_+} \frac{2u^2-1}{ v u^2}  du >
\int_{p_0+\delta_1}^{p_+} \frac{2u^2-1}{ v u^2}  du >
\frac{2(p_0+\delta_1)^2-1}{p_+^2} \int_{p_0+\delta_1}^{p_+} \frac{du}{v}.
\end{equation}
Since $p_0 \in (p_*,p_*+\delta_0)$ and $q_{\rm max}(p_0)$ is continuous, $p_+$ is bounded from above,
so that there exists some $\delta_2>0$ such that
$$
\frac{2(p_0+\delta_1)^2-1}{p_+^2}> \delta_2.
$$
Since $\qmax (p_0) >\epsilon$ and the integration in (\ref{inter-qmax}) goes along the energy level containing $(p_0, \qmax (p_0))$, there exists some $\delta_3>0$ such that
$$
\int_{p_0+\delta_1}^{p_+} \frac{du}{v} = T_+ (p_0+\delta_1, \qmax (p_0)) > \delta_3.
$$
Combining the computations above, we get that (\ref{crit-point-cond}) becomes
$$
\frac{2p_0^2-1}{p_0} = \qmax(p_0) \int_{p_0}^{p_+} \frac{2u^2-1}{ v u^2}  du > \epsilon \delta_2 \delta_3,
$$
which is the contradiction since $\frac{2p_0^2-1}{p_0} \to 0$ as $p_0 \to p_*$.
Hence $q_{\rm max}(p_0) \to 0$ as $p_0 \to p_*$.

Thus, the graph of the function $p_0 \mapsto \qmax (p_0)$ starts from zero at $p_0 = p_*$ and
traverses beyond the homoclinic orbit for $p_0 > p_{**}$. By continuity of $\qmax$ in $p_0$,
$\qmax$ intersects at least once each energy level (\ref{energy-level})
inside the homoclinic orbit. By Lemma \ref{no-double-roots}, the intersection of $\qmax$
with each energy level is unique. This proves the assertion of this lemma.
\end{proof}

By Lemma \ref{behaviour-qmax}, the curve $C_1$ in (\ref{curve-1}) intersects at least once with every energy level $E(u,v) = E_0(p_0,q_0)$
inside the homoclinic orbit. On the other hand, the curve $C_2$ in (\ref{curve-2}) lies entirely within
the homoclinic orbit, hence there exists an intersection between the curves $C_1$ and $C_2$. The following lemma shows that
this intersection is in fact unique.

\begin{lemma}
\label{no-double-intersections}
There exists exactly one value of $p_0 \in (p_*,p_{**})$ for which $\qmax(p_0) = \frac{1}{2N} \sqrt{A(p_0)}$.
\end{lemma}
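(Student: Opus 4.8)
The plan is to recast the assertion as a one–variable crossing problem and then settle it by a transversality argument, treating existence (already granted before the lemma) and uniqueness separately. First I would present both curves as graphs over $p_0$: by Lemma~\ref{behaviour-qmax} the part of $C_1$ lying inside the homoclinic loop is the graph $q_0=\qmax(p_0)$ for $p_0\in(p_*,p_{**})$, while $C_2$ is the graph $q_0=\frac{1}{2N}\sqrt{A(p_0)}$, which is \emph{strictly decreasing} on $(p_*,1)$ since $A'(p_0)=2p_0(1-2p_0^2)<0$ there. Setting $\delta(p_0):=\qmax(p_0)-\frac{1}{2N}\sqrt{A(p_0)}$, an intersection of $C_1$ and $C_2$ is exactly a zero of $\delta$ in $(p_*,p_{**})$. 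The boundary signs are immediate: $\delta(p_0)\to-\frac{1}{4N}<0$ as $p_0\to p_*^+$ because $\qmax(p_0)\to0$, whereas $\delta(p_0)>0$ as $p_0\to p_{**}^-$ because the value $\qmax(p_{**})$ supplied by Lemma~\ref{behaviour-qmax} strictly exceeds $\frac{1}{2N}\sqrt{A(p_{**})}$. Thus a zero exists by the intermediate value theorem, recovering the intersection already noted before the lemma; the content here is uniqueness.

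For uniqueness I would show that every zero of $\delta$ is a strictly upward crossing, i.e. $\delta'(p_0)>0$ whenever $\delta(p_0)=0$. This alone forces a single zero: if $z_1<z_2$ were two such zeros, then $\delta>0$ just right of $z_1$ while $\delta<0$ just left of $z_2$, so $\delta$ would have an intermediate zero with nonpositive slope, contradicting the upward–crossing property. Differentiating, $\delta'(p_0)=\qmax'(p_0)-\frac{A'(p_0)}{4N\sqrt{A(p_0)}}$, and the second term is strictly positive on $(p_*,1)$ because $A'<0$; hence it suffices to prove $\qmax'(p_0)\ge0$, which in fact gives the cleaner statement that $\delta$ is strictly increasing throughout $(p_*,p_{**})$.

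The monotonicity of $\qmax$ is where the real work lies. Since $\qmax(p_0)$ is defined by $\frac{\partial T_+}{\partial q_0}(p_0,\qmax(p_0))=0$ and $T_+$ is smooth in $(p_0,q_0)$, implicit differentiation yields $\qmax'(p_0)=-\big(\partial_{p_0}\partial_{q_0}T_+\big)\big/\big(\partial_{q_0}^2T_+\big)$, once nondegeneracy of the maximizer is in hand. The uniqueness of the interior critical point per level (Lemma~\ref{behaviour-qmax}, supported by Lemma~\ref{no-double-roots}, which rules out a second critical point on the same energy level and hence any tangency of $C_1$ with the level curves) makes $\qmax$ a genuine, isolated, nondegenerate maximum, so $\partial_{q_0}^2T_+<0$ and the sign of $\qmax'$ equals the sign of the mixed derivative $\partial_{p_0}\partial_{q_0}T_+$. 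I would extract that sign by differentiating the critical–point identity~(\ref{crit-point-cond}) (or its integrated form~(\ref{crit-point-cond-2})) and using $A(u)=u^2(1-u^2)$ together with $A'(p_0)<0$ and $2u^2-1>0$ for $u\in[p_0,p_+]\subset(p_*,1)$, in the same spirit as the integration-by-parts manipulations of Lemmas~\ref{global-decreasing} and~\ref{monotone-on-left}.

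The main obstacle is precisely this sign computation for $\partial_{p_0}\partial_{q_0}T_+$ (equivalently, the claim that $C_1$ rises while $C_2$ falls): the derivative acts on a weakly singular integral over $[p_0,p_+]$ in which the lower limit increases, the turning point $p_+$ grows, and the energy level $E_0(p_0)$ rises simultaneously, so the naive estimates pull in competing directions and must be balanced carefully. I expect the decisive move to be an integration by parts as in~(\ref{good-int-by-parts}) and~(\ref{crit-point-cond-2}) that turns the singular integrand into a manifestly signed quantity, with Lemma~\ref{no-double-roots} guaranteeing that the reduction to a sign is legitimate and that no tangential (double) intersection can occur. As a conceptual cross-check I would note that, by Lemma~\ref{beta-0-cond}, a zero of $\delta$ coincides with a zero of the first Dirichlet eigenvalue $\beta_1$ of $\SP_2$, so uniqueness is equivalent to $\beta_1$ crossing zero only once along the family; monotonicity of $\beta_1$ in $\epsilon$ (via domain monotonicity of Dirichlet eigenvalues together with the monotone dependence of the potential $6u_1^2$) furnishes an alternative, if more analytic, route to the same conclusion.
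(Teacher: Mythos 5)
Your setup is sound and your boundary analysis of $\delta(p_0):=\qmax(p_0)-\frac{1}{2N}\sqrt{A(p_0)}$ is correct, but the proof has a genuine gap at exactly the point you flag as ``where the real work lies'': you never establish $\qmax'(p_0)\ge 0$, and the machinery you propose for it is not available. Your route requires (i) that $\qmax$ is differentiable, which needs the nondegeneracy $\partial_{q_0}^2 T_+(p_0,\qmax(p_0))<0$ — Lemma~\ref{no-double-roots} only excludes a \emph{second} critical point on the same energy level, not a degenerate one, and Lemma~\ref{behaviour-qmax} only gives continuity of $\qmax$; and (ii) a sign for the mixed partial $\partial_{p_0}\partial_{q_0}T_+$, for which you offer only the expectation that an integration by parts ``in the same spirit'' will work. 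Neither fact is proved in the paper or in your argument, so the decisive inequality is asserted rather than derived. The fallback via monotonicity of $\beta_1$ in $\epsilon$ is also not a repair: as $\epsilon$ grows the interval $(-\pi\epsilon,\pi\epsilon)$ expands but the potential $6u_1^2$ changes nonmonotonically (the amplitude $p_+$ determined by (\ref{p+p0}) first increases and then decreases as $p_0$ runs from $1$ to $0$), so domain monotonicity alone does not give a one-way crossing of $\beta_1$.

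The paper sidesteps all of this by never differentiating $\qmax$ at all. It restricts the critical-point identity (\ref{crit-point-cond}) to the curve $C_2$, i.e.\ substitutes $q_0=\frac{1}{2N}\sqrt{A(p_0)}$, obtaining the explicit scalar function $\mathcal{F}(p_0)$ of (\ref{local-function}) whose zeros are precisely the intersections. After the integration by parts (\ref{integr-by-parts}) puts $\mathcal{F}$ in the manifestly structured form (\ref{local-function-2}), a direct computation of $\mathcal{F}'$ — simplified using the constraint $\mathcal{F}(p_0)=0$ at the putative root — shows $\mathcal{F}'>0$ at every zero, using only $\partial_{p_0}q_0=\frac{A'(p_0)}{4N\sqrt{A(p_0)}}<0$ on $(p_*,1)$ and the positivity of the remaining terms. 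This is the same ``every crossing is upward'' logic you invoke, but applied to a function whose derivative is computable from the level-curve integrals already analyzed in Lemmas~\ref{global-decreasing} and~\ref{monotone-on-left}, rather than to $\qmax$ itself. To complete your proof you would either need to supply the nondegeneracy and mixed-partial estimates, or replace the $\qmax'$ step with a computation of this kind along $C_2$.
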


\begin{proof}
Consider the function $\mathcal{F} : (p_*,p_{**}) \to \mathbb{R}$ given by
\begin{equation}
\label{local-function}
\mathcal{F}(p_0) = p_0^2 (2p_0^2-1) - q_0 p_0^3 \int_{p_0}^{p_+} \frac{2u^2-1}{ v u^2}  du,
\end{equation}
where $q_0=\frac{1}{2N} \sqrt{A(p_0)}$ and the integration is performed along the level curve with $E(u,v) = E_0(p_0,q_0)$. By (\ref{crit-point-cond}), $\mathcal{F}(p_0) = 0$ if and only if
$\qmax (p_0) = \frac{1}{2N}\sqrt{A(p_0)}$. Since by Lemma \ref{behaviour-qmax}, $\mathcal{F}(p_0) = 0$
has at least one root in $(p_*,p_{**})$, it suffices to show that there are no other roots.

By using (\ref{integr-by-parts}), we obtain
\begin{equation}
\label{local-function-2}
\mathcal{F}(p_0) = (2p_0^2-1)p_0^2 - q_0^2 + 3 q_0 p_0^3
\int_{p_0}^{p_+} \frac{v}{u^4} du.
\end{equation}
We claim that $\mathcal{F}'(p_{\rm bif})$ at the root $p_{\rm bif}$ of $\mathcal{F}(p_0) = 0$, so that
the root $p_{\rm bif}$ is unique. Indeed, taking the derivative in (\ref{local-function-2}) with respect to $p_0$, and using that
$q_0 = \frac{1}{2N}\sqrt{A(p_0)}$ and $\mathcal{F}(p_0)=0$ we obtain
$$
\mathcal{F}'(p_0) = p_0(2p_0^2+1) - \frac{\partial_{p_0} q_0}{q_0}
\left[q_0^2 + (2p_0^2-1) p_0^2 \right] +
p_0(2p_0^2-1) \left[1 - \frac{1}{4N^2} \right] \int_{p_0}^{p_+} \frac{du}{vu^4},
$$
which is strictly positive since $\partial_{p_0} q_0 = \frac{A'(p_0)}{4 N \sqrt{A(p_0)}} < 0$ for
$p_0 \in (p_*, 1)$. This completes the proof.
\end{proof}

Figure \ref{fig:tmax} illustrates the results of Lemmas \ref{behaviour-qmax} and \ref{no-double-intersections}.
The black dashed curve displays the homoclinic orbit at the energy level $E(u,v) = 0$.
The red dashed curve gives the curve $C_2$ for $N = 3$. The blue solid curve shows the curve $C_1$. There exists
only one intersection of curves $C_1$ and $C_2$ and it occurs at $p_{\rm bif} \approx 0.711$ (for $N = 3$)
The existence of the unique value of $p_{\rm bif}$ is stated in Lemma \ref{no-double-intersections}.
Moreover, $C_1$ crosses the homoclinic orbit at $p_{**} \approx 0.782$
in agreement with Lemma \ref{behaviour-qmax}.

\begin{figure}[htbp]
\includegraphics[scale=0.4]{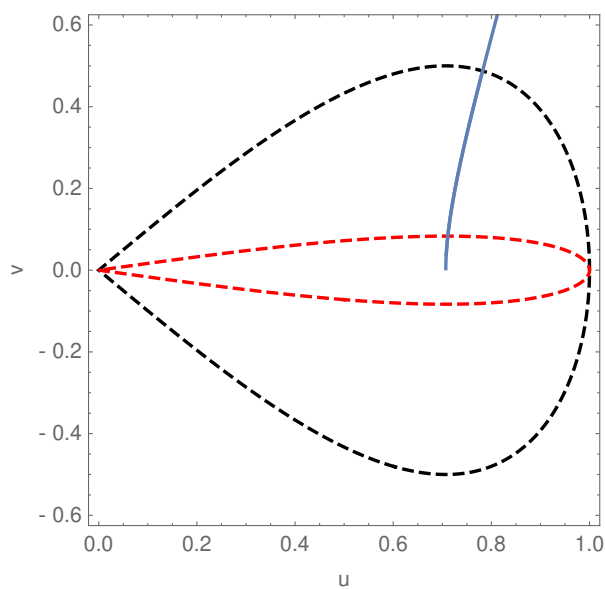}
\caption{Numerical illustration of Lemmas \ref{behaviour-qmax} and \ref{no-double-intersections} on the $(u,v)$-plane. }
\label{fig:tmax}
\end{figure}

\subsection{Proof of Theorem \ref{global-stability}.}

By Lemma \ref{global-decreasing} for $\mathcal{T}(p_0) = \pi \epsilon$ defined in (\ref{root-i}),
the mapping from $p_0 \in (0,1)$ to $\epsilon = \frac{1}{\pi} \mathcal{T}(p_0) \in (0, \infty)$ is a monotonic bijection.

For sufficiently small values of $\epsilon>0$, the value of $p_0$ is near $1$. Then, by Lemmas
\ref{nonmonotone-on-right} and \ref{behaviour-qmax},
$T_+(p_0, q_0)$ has no critical points with respect to $q_0$ in $(0, \sqrt{A(p_0)})$ and is monotonically increasing in $q_0$.
In this case, the solution $s(z;p_0,q_0)$ to the differential equation (\ref{eqn-s}) with $q_0 = \frac{1}{2N} \sqrt{A(p_0)}$
satisfies $s(z;p_0,q_0) > 0$ for $z \in [-\pi \epsilon,\pi \epsilon]$.
By Proposition \ref{positivity-sp1}, we conclude that the first eigenvalue $\lambda = \beta_1$ in $\SP_2$ is positive.
Therefore, Lemmas \ref{first-eig-L} and \ref{second-eig-L} imply that the spectral problem (\ref{L-spectral-sym})
has exactly one negative eigenvalue and no zero eigenvalues, so that $n(\mathcal{L}) = 1$ and $z(\mathcal{L}) = 0$
for sufficiently small $\epsilon>0$.

Let $\beta_1$ be the first eigenvalue in $\SP_2$ and $\gamma_2$ be the second eigenvalue in $\SP_1$.
Since $\beta_1 > 0$ and $\gamma_2 > 0$ for sufficiently small $\epsilon>0$, it suffices to show that $\beta_1=0$ at some unique point
$\epsilon_* \in (0, \infty)$ so that $\beta_1<0$ for all $\epsilon> \epsilon_*$, whereas $\gamma_2 > 0$ for all $\epsilon > 0$.
By Lemma \ref{gamma-2-nonzero} it follows that $\gamma_2 \neq 0$ for every $\epsilon > 0$,
hence $\gamma_2 > 0$ for all $\epsilon > 0$.

Next, we show that $\beta_1 = 0$ for some $\epsilon_* \in (0, \infty)$.
Indeed, by Lemmas \ref{behaviour-qmax} and \ref{no-double-intersections},
the curves $C_1$ and $C_2$ defined by (\ref{curve-1}) and (\ref{curve-2}) intersect 
exactly once at some $p_{\rm bif} \in (p_*, 1)$. By Lemma \ref{beta-0-cond}, $\beta_1 = 0$ at this $p_{\rm bif}$
and by Lemma \ref{global-decreasing}, there exists a unique value $\epsilon_*$ for this $p_{\rm bif}$.
By Lemma \ref{lem-vertex-cond}, $\beta_1$ has multiplicity $N-1$ in the spectral problem
(\ref{L-spectral-sym}) so that $z(\mathcal{L}) = N-1$ for this $\epsilon_*$. No other intersections
exist so that $z(\mathcal{L}) = 0$ for $\epsilon \neq \epsilon_*$.

Finally, for $\epsilon > \epsilon_*$, $\qmax(p_0) < \frac{1}{2N} \sqrt{A(p_0)}$ for $p_0 \in (p_*,p_{\rm bif})$
or does not exist if $p_0 \in (0,p_*]$ by Lemma \ref{monotone-on-left}. In both cases, the solution $s(z;p_0,q_0)$
to the differential equation (\ref{eqn-s}) with $q_0 = \frac{1}{2N} \sqrt{A(p_0)}$
vanishes at some internal points in $[-\pi \epsilon,\pi \epsilon]$. By Proposition \ref{positivity-sp1},
it follows that $\beta_1 <0$ for $\epsilon > \epsilon_*$, so that $n(\mathcal{L}) = N$ for $\epsilon > \epsilon_*$.

Theorem \ref{global-stability} is proven. Figure \ref{fig-second-eigenvalue} illustrates
the result of Theorem \ref{global-stability}. The second eigenvalue $\lambda_2$ of
the spectral problem (\ref{L-spectral-sym}) is computed by using numerical approximation of
the first eigenvalue $\lambda = \beta_1$ in the Sturm--Liouville problem $\SP_2$
and is shown versus $\omega$. It follows from Fig. \ref{fig-second-eigenvalue}
that there exists a value $\omega_* \in (-\infty,0)$ for which $\lambda_2 = \beta_1$ crosses zero.
This is the bifurcation point for the positive single-lobe symmetric state $\Phi$ in Theorem \ref{global-stability}.

\begin{figure}[htbp] 
   \centering
  \includegraphics[width=2.8in, height = 2in]{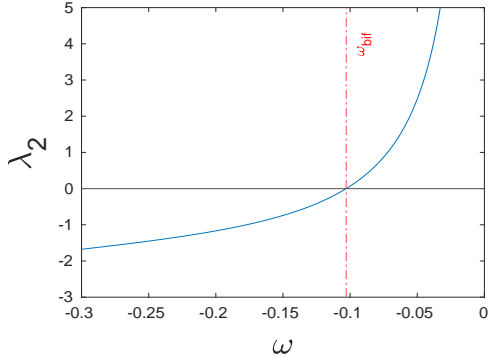}
   \caption{The second eigenvalue $\lambda_2 = \beta_1$ of the spectral problems (\ref{L-spectral-sym})
   and (\ref{sp1})
   as a function of $\omega$ for the positive single-lobe symmetric state $\Phi$ on the flower graph
   $\Gamma_N$ with $N = 3$. The eigenvalue crosses zero at $\omega = \omega_*$.}
   \label{fig-second-eigenvalue}
\end{figure}

\section{Existence of other positive single-lobe states}
\label{sec-global-bifurcations}

Recall that by Theorem \ref{global-stability}, there exists a unique
$\omega_* \in (-\infty, 0)$, and unique corresponding $p_{\rm bif} \in (p_*,1)$,
at which the single-lobe symmetric state $\Phi$ defined in Theorem \ref{global-existence} admits a bifurcation
in the sense of Definition \ref{def-bifurcation}.

Here we are interested in the existence of asymmetric, $K$-split, single-lobe states of Definition \ref{def-asymmetric} for $p_0 \in (0,p_*)$. This range of values of $p_0$ does not cover the entire admissible interval since $p_{\rm bif} \in (p_*,1)$ but it is sufficient for the proof of Theorem \ref{global-bifurcations}.

After the scaling transformation (\ref{scaling-transform}),
the asymmetric positive state $(u_1, u_2, \dots, u_N, u_0)$ satisfies the system of
differential equations given by (\ref{NLS-scaled})--(\ref{bvp-scaled}).
Taking into account the solution (\ref{soliton}) for $u_0$ with $p_0 = {\rm sech}(a) = u_0(0)$,
each component $u_j$ for $j=1, \dots, N$ satisfies the following boundary-value problem
\begin{equation}
\label{bvp-components}
\left\{ \begin{array}{l} -u_j''(z) + u_j(z) - 2 u_j(z)^3 = 0, \quad
z \in (-\pi \epsilon, \pi \epsilon), \\
u_j(-\pi \epsilon) = u_j(\pi \epsilon) = p_0.
\end{array} \right.
\end{equation}
Assuming that $u_j$ is even, the derivative condition
in (\ref{bvp-scaled}) is satisfied if the derivative of the components satisfy the
scalar equation
\begin{equation}
\label{NKcondition}
2 \sum_{j=1}^N u_j'(-\pi \epsilon) = \sqrt{A(p_0)}.
\end{equation}

Using the first-order invariant in (\ref{invariant}), any single-lobe solution to
the boundary-value problem (\ref{bvp-components}) satisfies either
\begin{equation}
\label{asym-sol-plus}
\left\{ \begin{array}{l}
E(u_j, u_j') = E(p_0, q_j), \\
u_j(-T_+(p_0, q_j)) = p_0, \\
u_j'(-T_+(p_0, q_j)) = q_j \geq 0, \\
T_+(p_0, q_j) = \pi \epsilon,
\end{array} \right.
\end{equation}
or
\begin{equation}
\label{asym-sol-minus}
\left\{ \begin{array}{l}
E(u_j, u_j') = E(p_0, q_j), \\
u_j(-T_-(p_0, q_j)) = p_0, \\
u_j'(-T_-(p_0, q_j)) = -q_j \leq 0, \\
T_-(p_0, q_j) = \pi \epsilon,
\end{array} \right.
\end{equation}
where the period functions $T_+$ and $T_-$ are given in (\ref{period}) with fixed value of $E(u_j,u_j') = E$.
Therefore, any asymmetric single-lobe state is a combination of
the solutions of type (\ref{asym-sol-plus}) or (\ref{asym-sol-minus}).

In order to prove Theorem \ref{global-bifurcations}, we first study monotonicity of the period function $T_-(p_0,q_0)$ in $q_0$
for $p_0 \in (0,p_*)$. Then, we prove existence and uniqueness of the asymmetric positive single-lobe states with $K$-split profile
described by Definition \ref{def-asymmetric}.
Finally, we study the mapping from $p_0 \in (0,p_*)$ to $\epsilon \in (0,\infty)$, which extends to the limit $\epsilon \to \infty$
that corresponds to the limit $\omega \to -\infty$.

\subsection{Monotonicity of the period function $T_-$.}

The following lemma shows that the period function $T_-(p_0,q_0)$ defined by (\ref{period})
is monotonically increasing for $p_0 \in (0, p_*)$.

\begin{lemma}
\label{monotone-T-minus}
For every $p_0 \in (0,p_*)$, $T_-(p_0,q_0)$ is a monotonically
increasing function of $q_0$ in $(0,\sqrt{A(p_0)})$.
Moreover, $T_-(p_0, q_0)\to 0$ as $q_0 \to 0$, and
$T_-(p_0, q_0) \to \infty$ as $q_0 \to \sqrt{A(p_0)}$.
\end{lemma}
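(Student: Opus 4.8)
The plan is to mirror the line-integral regularization used in Lemmas \ref{global-decreasing} and \ref{monotone-on-left}, but now integrating along the branch running from the turning point $p_-$ up to $p_0$, and treating $p_0$ as fixed while $q_0 \in (0,\sqrt{A(p_0)})$ varies (so that $p_0$ and $q_0$ are independent, as in Lemma \ref{monotone-on-left}). Along the level curve $E_0(p_0,q_0) = q_0^2 - A(p_0) = -A(p_-)$ we have $v = \sqrt{E_0(p_0,q_0) + A(u)}$ with $v(p_-) = 0$ and $v(p_0) = q_0$, and the turning point $p_- \in (0,p_*)$ solves $A(p_-) = A(p_0) - q_0^2$. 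Starting from the differential identity (\ref{integration}) and evaluating $\int_{p_-}^{p_0}[\,v - (A(u)-A(p_*))/v\,]\,du$, the boundary term at $u = p_-$ drops out because $v(p_-) = 0$ while $A'(p_-) \neq 0$ for $p_- \in (0,p_*)$; this yields the regularized representation
\[
[E_0(p_0,q_0)+A(p_*)]\,T_-(p_0,q_0) = \int_{p_-}^{p_0}\left[3 - \frac{2(A(u)-A(p_*))A''(u)}{[A'(u)]^2}\right] v\,du - \frac{2(A(p_0)-A(p_*))q_0}{A'(p_0)},
\]
whose integrand is free of singularities, so $q_0 \mapsto T_-(p_0,q_0)$ is $C^1$.

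Next I would differentiate this identity in $q_0$, using $\partial_{q_0} v = q_0/v$ and $\partial_{q_0}[E_0(p_0,q_0)+A(p_*)] = 2q_0$, noting again that the contribution of the moving endpoint $p_-$ vanishes since $v(p_-) = 0$. After the same algebraic collapse that produced (\ref{derivative-E-plus}) — in which the bracket reduces to $(1-2u^2)/(4u^2)$ and the boundary term to $(1-2p_0^2)/(4p_0)$ via $A(u) = u^2 - u^4$ and $A(p_*) = \tfrac14$ — this gives
\[
[E_0(p_0,q_0)+A(p_*)]\,\frac{\partial T_-}{\partial q_0}(p_0,q_0) = q_0 \int_{p_-}^{p_0} \frac{1-2u^2}{4u^2\,v}\,du + \frac{1-2p_0^2}{4p_0}.
\]
The key point is that for $p_0 \in (0,p_*)$ the entire range satisfies $[p_-,p_0] \subset (0,p_*)$, so $1 - 2u^2 > 0$ throughout and $1 - 2p_0^2 > 0$; since $E_0(p_0,q_0)+A(p_*) > 0$ by (\ref{E-positivity}) and $q_0 > 0$, every term on the right is positive and hence $\partial_{q_0} T_- > 0$. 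Unlike the $T_+$ analysis of Lemma \ref{monotone-on-left}, no delicate sign balancing or Cauchy--Schwarz estimate is needed here, precisely because for $p_0 < p_*$ the branch never crosses the critical value $p_*$.

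Finally I would establish the two limits. As $q_0 \to 0$ we have $E_0(p_0,q_0) \to -A(p_0)$ and $p_- \to p_0$, so the integration length $|p_0 - p_-| \to 0$ while the integrand $1/\sqrt{A(u)-A(p_-)}$ retains only the integrable square-root singularity at $p_-$ (with $A'(p_-) \to A'(p_0) > 0$ bounded away from zero); hence $T_- \to 0$ by the same reasoning as in (\ref{limit-integral}). As $q_0 \to \sqrt{A(p_0)}$ we have $E_0(p_0,q_0) \to 0$ and $p_- \to 0$, and I would bound below via the factorization $A(u) - A(p_-) = (u^2 - p_-^2)(1 - u^2 - p_-^2) \le u^2 - p_-^2$ on $[p_-,p_0] \subset (0,p_*)$, so that
\[
T_-(p_0,q_0) = \int_{p_-}^{p_0} \frac{du}{\sqrt{A(u)-A(p_-)}} \ge \int_{p_-}^{p_0} \frac{du}{\sqrt{u^2 - p_-^2}} = \cosh^{-1}\!\left(\frac{p_0}{p_-}\right) \to \infty
\]
as $p_- \to 0$, reflecting the logarithmic blow-up of the transit time near the saddle at the origin. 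The only step requiring genuine care is justifying differentiation under the integral sign at the weak endpoint singularity $u = p_-$; this is handled by the regularized representation above, in which the $v\,du$ integrand is smooth and vanishes at $p_-$, so its $q_0$-derivative produces only the integrable $1/v$ singularity.
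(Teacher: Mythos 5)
Your proposal is correct and follows essentially the same route as the paper: the same regularized line-integral representation along $[p_-,p_0]$, the same differentiation in $q_0$ collapsing to the integrand $(1-2u^2)/(8u^2v)$ plus the boundary term $(1-2p_0^2)/(8p_0q_0)$ (your formula is the paper's multiplied through by $2q_0$), and the same observation that both terms are positive because $[p_-,p_0]\subset(0,p_*)$. The only difference is that you make the divergence $T_-\to\infty$ as $p_-\to 0$ explicit via the bound $A(u)-A(p_-)\le u^2-p_-^2$ and the $\cosh^{-1}(p_0/p_-)$ computation, where the paper simply asserts it.
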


\begin{proof}
We write
\begin{eqnarray*}
\left[ E_0(p_0,q_0) + A(p_*) \right] T_-(p_0,q_0) & = & \int_{p_-}^{p_0} \left[ v - \frac{A(u)-A(p_*)}{v} \right] du \\
& = & \int_{p_-}^{p_0}  \left[3 - \frac{2 (A(u) - A(p_*)) A''(u)}{[A'(u)]^2} \right] v du - \frac{2 [A(p_0) - A(p_*)] q_0}{A'(p_0)},
\end{eqnarray*}
where $E_0(p_0,q_0) = q_0^2 - A(p_0)$ and the integrands are non-singular for every $u \in (0,1)$.
Since $dE_0 = 2 q_0 dq_0$ at fixed $p_0 \in (0,1)$
and $dE = 2 v dv$ at fixed $u \in (0,1)$, we differentiate the previous
expression in $q_0$ and obtain
\begin{eqnarray*}
\frac{E_0(p_0,q_0) + A(p_*)}{2q_0} \frac{\partial}{\partial q_0} T_-(p_0,q_0) =
\int_{p_-}^{p_0} \left[1 - \frac{2 (A(u) - A(p_*)) A''(u)}{[A'(u)]^2} \right] \frac{du}{2v} - \frac{A(p_0) - A(p_*)}{q_0 A'(p_0)}.
\end{eqnarray*}
Recall that $E_0(p_0,q_0) + A(p_*) > 0$ for every $p_0 \in (0,1)$ and $q_0 \in (0,\infty)$ due to (\ref{E-positivity}).
Substituting $A(u)$ transforms the previous expression to the form:
\begin{eqnarray}
\label{derivative-E}
\frac{E_0(p_0,q_0) + A(p_*)}{2q_0} \frac{\partial}{\partial q_0} T_-(p_0,q_0)  & = &  \int_{p_-}^{p_0} \frac{1-2u^2}{8 v u^2} du + \frac{1-2p_0^2}{8p_0 q_0}.
\end{eqnarray}
Since both terms in the right-hand side of (\ref{derivative-E}) are strictly positive if $p_0 \in (0,p_*)$ with $q_0 \in (0,\infty)$,
we conclude that $\frac{\partial}{\partial q_0} T_-(p_0,q_0) > 0$ if $p_0 \in (0,p_*)$.

It follows that $T_-(p_0, q_0) \to 0$ as $q_0 \to 0$ similarly as in Lemma \ref{nonmonotone-on-right}.
On the other hand, $p_- \to 0$ as $q_0 \to \sqrt{A(p_0)}$, hence $T_-(p_0, q_0) \to \infty$ as $q_0 \to \sqrt{A(p_0)}$.
\end{proof}

The following lemma follows from monotonicity of the period functions $T_+$ and $T_-$ in $q_0$ for every $p_0 \in (0,p_*)$,
thanks to Lemmas \ref{monotone-on-left} and \ref{monotone-T-minus}.

\begin{lemma}
\label{no-same-state}
For every $p_0 \in (0, p_*)$, there are no distinct solutions $u_j(z)$ and $u_i(z)$
to the boundary-value problem (\ref{bvp-components}) such that $u_j(z)$ and $u_i(z)$
are either both of type (\ref{asym-sol-plus}) or both of type (\ref{asym-sol-minus}).
\end{lemma}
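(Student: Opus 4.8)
The plan is to reduce the claim entirely to the strict monotonicity of the period functions $T_+$ and $T_-$ in the variable $q_0$ already secured in Lemmas \ref{monotone-on-left} and \ref{monotone-T-minus}. First I would observe that, for a fixed $p_0 \in (0,p_*)$, a solution $u_j$ of type (\ref{asym-sol-plus}) is completely determined by its parameter $q_j$: the prescribed value $u_j(-\pi\epsilon)=p_0$ together with the boundary derivative $u_j'(-\pi\epsilon)=q_j$ serve as Cauchy data at $z=-\pi\epsilon$ for the second-order equation in (\ref{bvp-components}), whose right-hand side is smooth, so uniqueness for the initial-value problem forces $u_j$ to be recovered uniquely from $(p_0,q_j)$. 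The same holds for type (\ref{asym-sol-minus}) with boundary derivative $-q_j$. Consequently, two \emph{distinct} solutions of the same type must carry two distinct parameter values $q_j \neq q_i$, and the condition defining the type fixes whether the half-width $\pi\epsilon$ is expressed through $T_+$ or through $T_-$.

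Next I would pin down the admissible range of the parameter dictated by the first-order invariant (\ref{invariant}), so as to confirm it matches the interval on which each monotonicity lemma applies. For type (\ref{asym-sol-plus}), the relation $E = q_j^2 - A(p_0) = -A(p_+)$ lets $q_j$ range over all of $(0,\infty)$ (with the positive turning point $p_+$ increasing past $1$ once $q_j^2$ exceeds $A(p_0)$, the solution staying positive on the loop since it is monotone from $p_0$ up to $p_+$ on each half), which is precisely the interval on which Lemma \ref{monotone-on-left} gives that $q_0 \mapsto T_+(p_0,q_0)$ is strictly decreasing for $p_0 \in (0,p_*]$. For type (\ref{asym-sol-minus}), the relation $A(p_-) = A(p_0) - q_j^2$ with $0 < p_- < p_0 < p_*$ confines $q_j$ to $(0,\sqrt{A(p_0)})$, which is exactly the interval on which Lemma \ref{monotone-T-minus} gives that $q_0 \mapsto T_-(p_0,q_0)$ is strictly increasing for $p_0 \in (0,p_*)$.

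With these observations the conclusion is immediate. If $u_j$ and $u_i$ are both of type (\ref{asym-sol-plus}), then $T_+(p_0,q_j) = \pi\epsilon = T_+(p_0,q_i)$, and strict monotonicity of $T_+(p_0,\cdot)$ on $(0,\infty)$ forces $q_j = q_i$, hence $u_j \equiv u_i$ by the uniqueness noted above, contradicting distinctness; the identical argument with $T_-$ on $(0,\sqrt{A(p_0)})$ rules out two distinct solutions both of type (\ref{asym-sol-minus}). I do not expect a genuine obstacle here, since the analytic heart of the matter — the definite sign of $\partial_{q_0}T_\pm$ — has already been carried out in the two cited lemmas; the only care required is the bookkeeping of the second paragraph, namely verifying that the full range of $q_j$ produced by each solution type coincides with the interval of validity of the corresponding monotonicity statement.
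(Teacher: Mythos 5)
Your argument is correct and follows essentially the same route as the paper: distinctness of two same-type solutions forces $q_j \neq q_i$ (by uniqueness for the Cauchy problem), and then the strict monotonicity of $T_+(p_0,\cdot)$ from Lemma \ref{monotone-on-left} or of $T_-(p_0,\cdot)$ from Lemma \ref{monotone-T-minus} contradicts $T_\pm(p_0,q_j)=\pi\epsilon=T_\pm(p_0,q_i)$. The paper's proof is exactly this two-case contradiction, with your added bookkeeping on parameter ranges being a harmless elaboration.
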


\begin{proof}
If $u_j(z)$ and $u_i(z)$ are distinct and both have the type
(\ref{asym-sol-plus}), then $q_j \neq q_i$. By Lemma
\ref{monotone-on-left}, we have $T_+(p_0, q_j) \neq T_+(p_0, q_i)$ which contradicts
to the condition $T_+(p_0, q_j) =\pi \epsilon = T_+(p_0, q_i)$ in (\ref{asym-sol-plus}).

Similarly, if $u_j(z)$ and $u_i(z)$ are distinct and both have the type
(\ref{asym-sol-minus}), then $q_j \neq q_i$. By Lemma \ref{monotone-T-minus},
we have $T_-(p_0, q_j) \neq T_-(p_0, q_i)$ which contradicts
to the condition $T_-(p_0, q_j) =\pi \epsilon = T_-(p_0, q_i)$ in (\ref{asym-sol-minus}).
\end{proof}

\subsection{Construction of asymmetric single-lobe states}

By Lemma \ref{no-same-state}, every asymmetric single-lobe state must have the particular
structure of Definition \ref{def-asymmetric} if $p_0 \in (0,p_*)$ with $K$
components being of type (\ref{asym-sol-plus}) and $(N-K)$ components being of type (\ref{asym-sol-minus}).
Up to permutation between the components in the $N$ loops, we order the $K$-split state as follows:
\begin{equation}
\label{q-cond}
q_1 = q_2 = \dots = q_K \geq 0 \quad \textrm{and} \quad
q_{K+1} = q_{K+2} = \dots = q_N \geq 0.
\end{equation}
The existence of asymmetric, $K$-split, single-lobe states for a given $p_0 \in (0,p_*)$
is equivalent to the existence of $(q_1, q_2, \dots, q_N)$ satisfying (\ref{q-cond}) and
solving the system of two nonlinear equations on $q_1$ and $q_N$:
\begin{equation}
\label{split-system}
\left\{ \begin{array}{l}
T_+(p_0, q_1) = T_-(p_0, q_N), \\
2 K q_1 - 2 (N-K)q_N = \sqrt{A(p_0)},
\end{array} \right.
\end{equation}
where the second equation comes from the boundary condition (\ref{NKcondition}).
The following lemma provides the unique solution to the system (\ref{split-system}) for each $K$.

\begin{lemma}
\label{up-down-state}
Let $p_0 \in (0, p_*)$. For every $K=1,2,\dots, N-1$, there exists the unique solution to the system (\ref{split-system})
and the unique asymmetric, $K$-split, single-lobe state in the sense of Definition \ref{def-asymmetric}.
\end{lemma}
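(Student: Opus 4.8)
The plan is to reduce the $2\times2$ system (\ref{split-system}) in the unknowns $(q_1,q_N)$ to a single scalar equation in $q_N$, and then to exploit the \emph{opposite} monotonicity of $T_+$ and $T_-$ in $q_0$ — established in Lemmas \ref{monotone-on-left} and \ref{monotone-T-minus} for $p_0\in(0,p_*)$ — to obtain existence and uniqueness via the intermediate value theorem. First I would solve the linear second equation of (\ref{split-system}) for $q_1$,
\[
q_1 = q_1(q_N) := \frac{\sqrt{A(p_0)} + 2(N-K)q_N}{2K},
\]
which is well-defined and strictly increasing in $q_N$ because $1\le K\le N-1$ forces $N-K\ge 1>0$. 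Since $q_1(0)=\frac{1}{2K}\sqrt{A(p_0)}>0$, the value $q_1(q_N)$ stays strictly positive for all $q_N\in[0,\sqrt{A(p_0)})$, so $T_+(p_0,q_1(q_N))$ is well-defined by Lemma \ref{monotone-on-left}. Substituting into the first equation of (\ref{split-system}) shows that a $K$-split state exists precisely when the continuous function
\[
G(q_N) := T_+\bigl(p_0,q_1(q_N)\bigr) - T_-(p_0,q_N)
\]
has a root $q_N\in(0,\sqrt{A(p_0)})$.

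Next I would establish uniqueness by showing that $G$ is strictly decreasing. By Lemma \ref{monotone-on-left}, $q_1\mapsto T_+(p_0,q_1)$ is strictly decreasing, while $q_1(q_N)$ is strictly increasing, so $q_N\mapsto T_+(p_0,q_1(q_N))$ is strictly decreasing; by Lemma \ref{monotone-T-minus}, $q_N\mapsto T_-(p_0,q_N)$ is strictly increasing, so $-T_-(p_0,q_N)$ is strictly decreasing. Hence $G$ is strictly decreasing on $(0,\sqrt{A(p_0)})$, which gives uniqueness of the root once existence is secured.

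Existence then follows by reading off the endpoint behaviour from the same two lemmas. As $q_N\to0^+$, Lemma \ref{monotone-T-minus} gives $T_-(p_0,q_N)\to0$, whereas $T_+(p_0,q_1(0))$ is a finite positive number, so $G(0^+)>0$. As $q_N\to\sqrt{A(p_0)}^-$, Lemma \ref{monotone-T-minus} gives $T_-(p_0,q_N)\to\infty$, while $q_1(q_N)$ tends to the finite value $\frac{(2(N-K)+1)}{2K}\sqrt{A(p_0)}$, so $T_+(p_0,q_1(q_N))$ remains bounded and therefore $G(q_N)\to-\infty$. By the intermediate value theorem $G$ has a (necessarily unique) root $q_N\in(0,\sqrt{A(p_0)})$, which determines $q_1=q_1(q_N)>0$. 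Via (\ref{asym-sol-plus}) and (\ref{asym-sol-minus}) these two values uniquely determine the $K$ components of type (\ref{asym-sol-plus}) and the $(N-K)$ components of type (\ref{asym-sol-minus}); by Lemma \ref{no-same-state} no other combination of component types is admissible for $p_0\in(0,p_*)$, so this is indeed the unique asymmetric, $K$-split, single-lobe state.

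The only genuinely nontrivial ingredient is the divergence $T_-(p_0,q_N)\to\infty$ as $q_N\to\sqrt{A(p_0)}$, since it is what forces the sign change $G:(+)\to(-\infty)$ and hence guarantees a crossing; this is exactly the content already supplied by Lemma \ref{monotone-T-minus}. Once the linear reduction and the opposite-monotonicity observation are in place, the remaining steps are routine, so I do not anticipate a serious obstacle beyond verifying the endpoint limits carefully.
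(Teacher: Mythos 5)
Your proof is correct and is essentially the same as the paper's: the paper reduces the system to the scalar equation $F(q_1)=T_+(p_0,q_1)-T_-(p_0,q_N(q_1))=0$ on the interval $\mathcal{I}(p_0;K)$, uses the opposite monotonicity from Lemmas \ref{monotone-on-left} and \ref{monotone-T-minus} to get strict monotonicity of $F$, and the same endpoint limits (positive value at one end, $-\infty$ at the other via the divergence of $T_-$) to conclude. The only difference is that you parameterize by $q_N$ rather than $q_1$, which is an equivalent affine change of variable.
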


\begin{proof}
By Lemma \ref{no-same-state}, for every asymmetric single-lobe state, there are no distinct components
$u_j(z)$ and $u_i(z)$ of the same type. If $u_j(z)$ and $u_i(z)$ are distinct, then one of them
is uniquely given by (\ref{asym-sol-plus}), while the other one is uniquely given by (\ref{asym-sol-minus}).
Hence, the assertion of the lemma holds if we can prove the existence of the unique solution to the system
(\ref{split-system}).

Consider the function $F(q_1)$ defined by
\begin{equation}
\label{Fq1}
F(q_1) := T_+(p_0, q_1) - T_-(p_0, q_N(q_1)),
\end{equation}
where $q_N(q_1)$ is obtained from the second equation of system (\ref{split-system}) in the form:
\begin{equation}
\label{q-N-unique}
q_N(q_1) = \frac{K}{N-K}q_1 - \frac{1}{2(N-K)}\sqrt{A(p_0)}.
\end{equation}
Since $q_N \geq 0$, we have $q_1 \geq \frac{1}{2K} \sqrt{A(p_0)}$.
In addition, it follows from positivity of the single-lobe solution that $q_N \leq \sqrt{A(p_0)}$,
so that $q_1 \leq \frac{2(N-K)+1}{2K}\sqrt{A(p_0)}$. Hence, we are only interested
in the behavior of $F$ on the interval
$$
\mathcal{I}(p_0;K) := \left[ \frac{1}{2K}\sqrt{A(p_0)}, \frac{2(N-K)+1}{2K}\sqrt{A(p_0)} \right].
$$
Since $q_N$ is monotonically increasing function of $q_1$, Lemmas \ref{monotone-on-left} and \ref{monotone-T-minus}
imply that the function $F$ is monotonically decreasing in $q_1$. We show that $F(q_1) = 0$ has an unique root in $\mathcal{I}(p_0;K)$.
As $q_1 \to \frac{1}{2K}\sqrt{A(p_0)}$, we have $q_N(q_1) \to 0$, and by Lemma \ref{monotone-T-minus},
$F(q_1) \to T_+(p_0, \frac{1}{2K}\sqrt{A(p_0)}) >0$.
On the other hand, as $q_1 \to \frac{2(N-K)+1}{2K}\sqrt{A(p_0)}$, we have $q_N(q_1) \to \sqrt{A(p_0)}$, and by Lemma \ref{monotone-T-minus},
$F(q_1) \to -\infty$. Therefore, by monotonicity of $F$, there exists the unique root of $F$ in $\mathcal{I}(p_0;K)$.
\end{proof}

\begin{figure}[htbp] 
	\centering
	\includegraphics[width=3in, height = 2in]{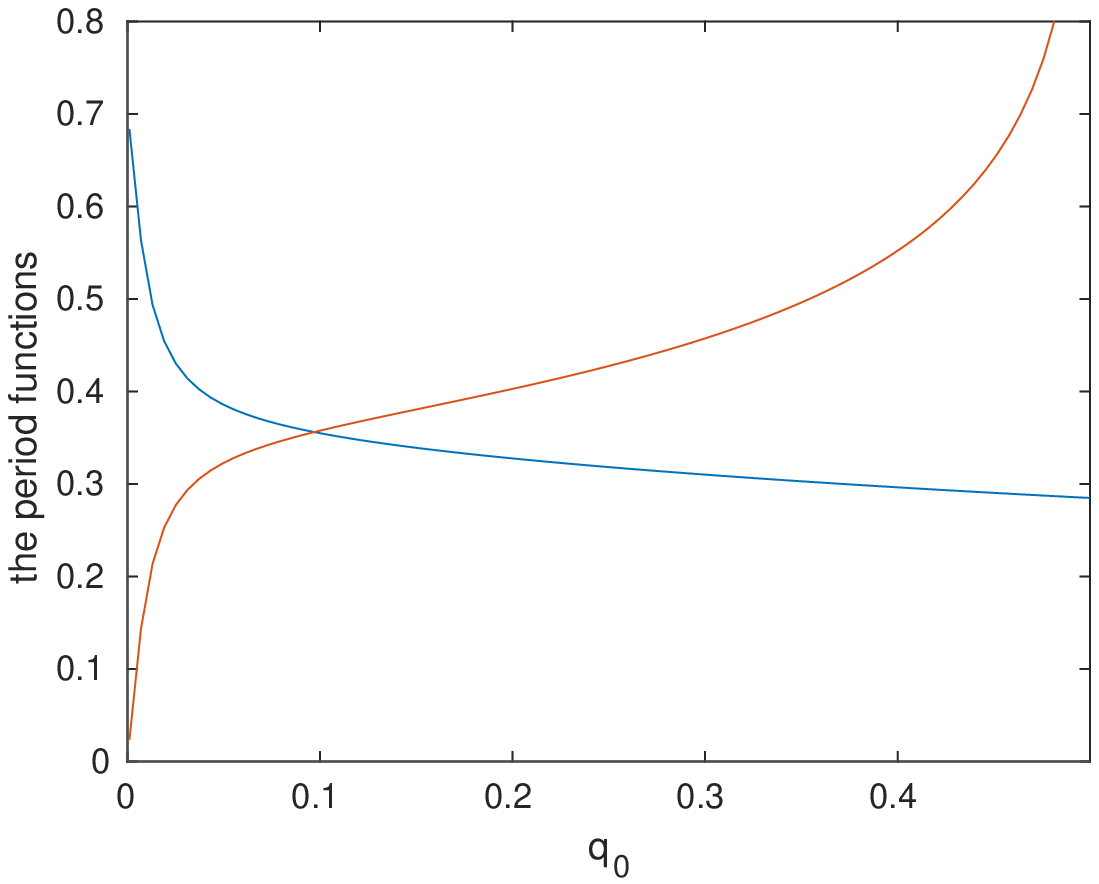}
	\includegraphics[width=3in, height = 2in]{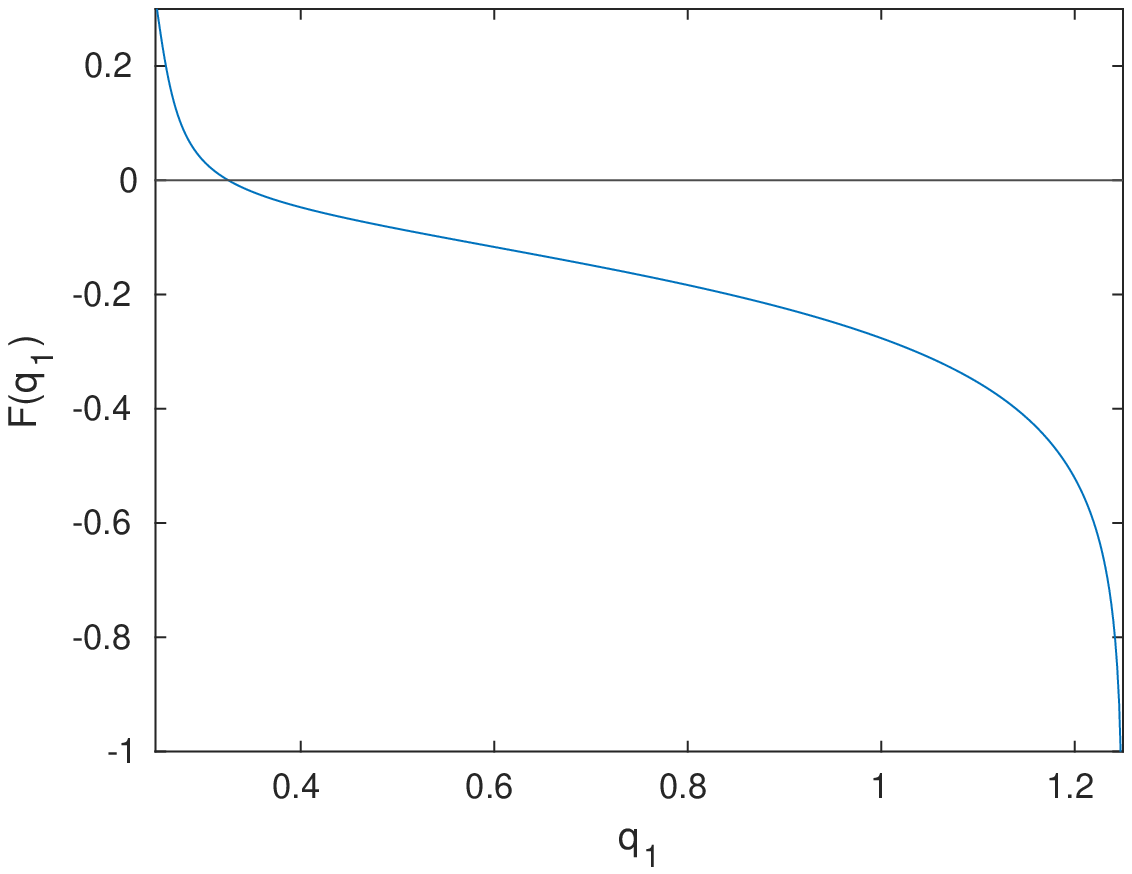}
	\caption{Numerical illustration to the statement of Lemma \ref{up-down-state} for
		$p_0 = 0.7003 \in (0, p_*)$, $N = 3$, and $K = 1$. Left: the blue and red lines show respectively the dependence of
		$T_+(p_0, q_0)$ and $T_-(p_0, q_0)$ in $q_0$. Right: The graph of the function $F$ defined in (\ref{Fq1}) with the only root.}
	\label{periods-section4}
\end{figure}
\begin{figure}[htbp]
	\centering
	\includegraphics[width=3in, height = 2in]{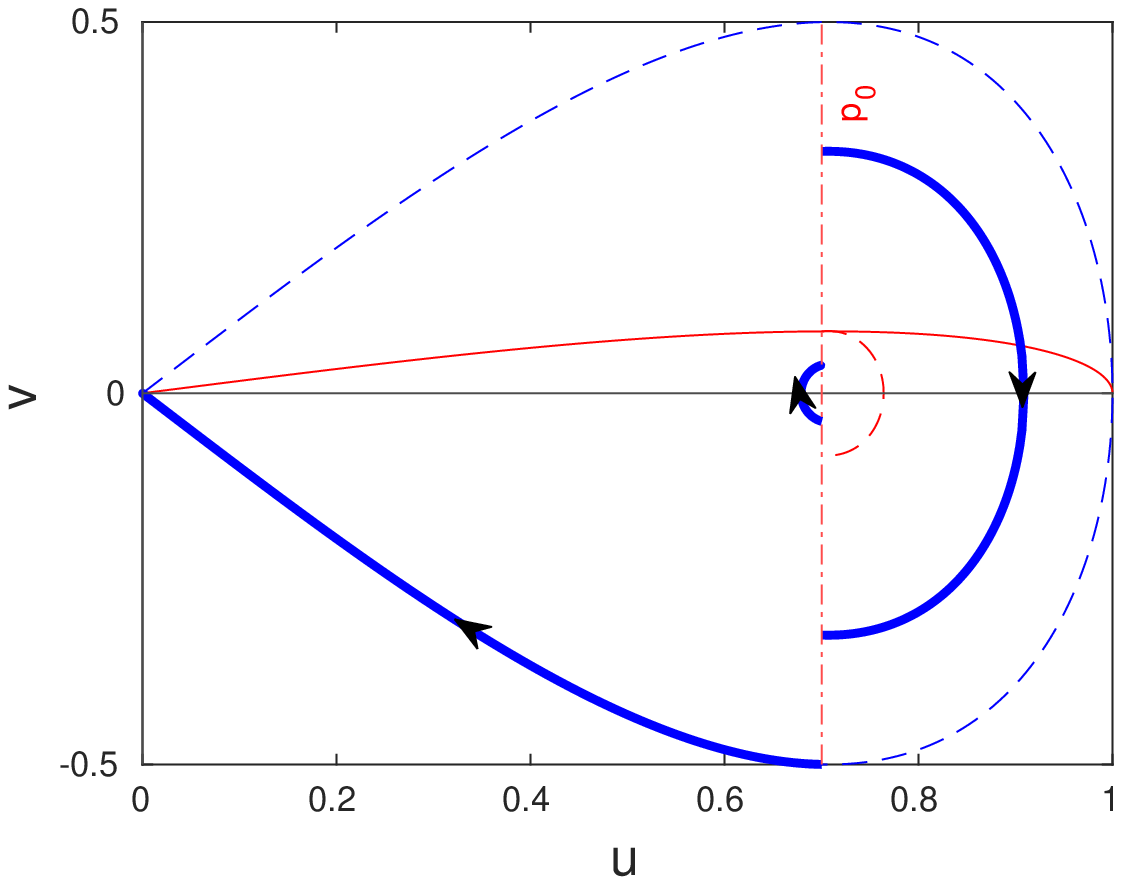}
	\includegraphics[width=3in, height = 2in]{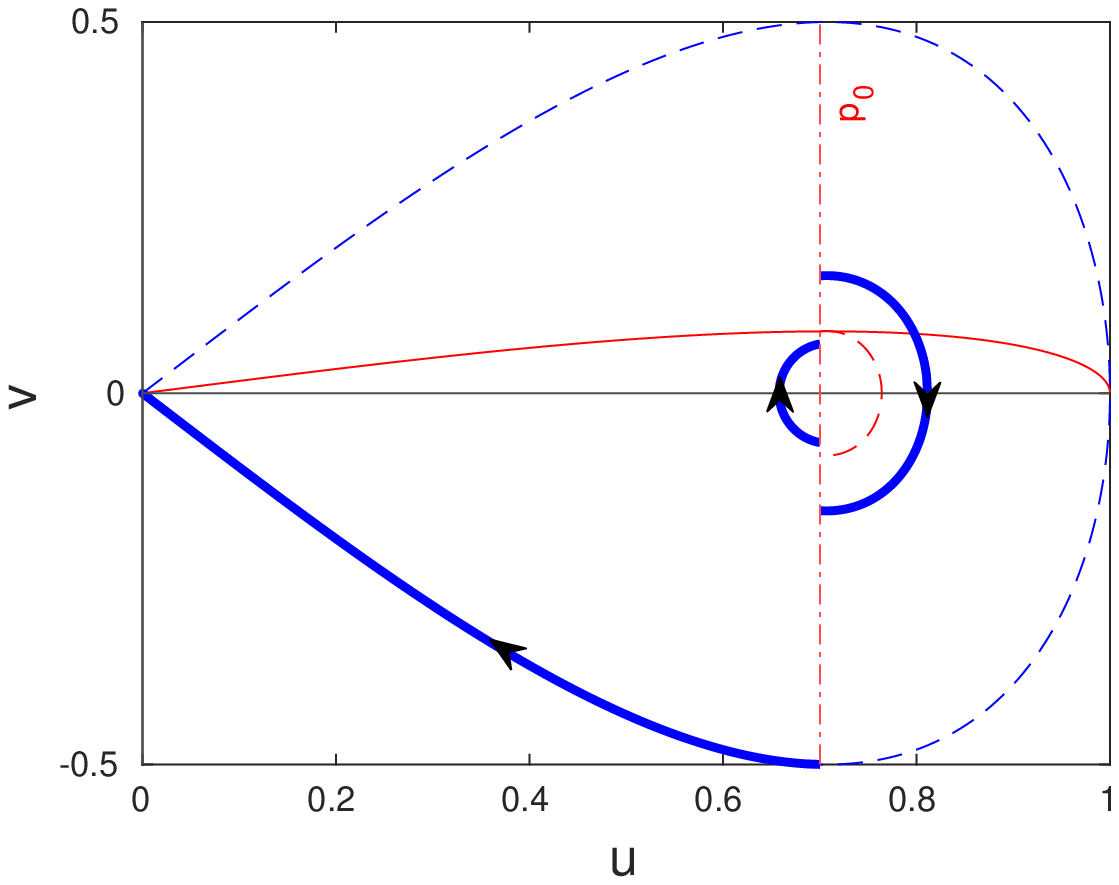}
	\caption{Construction of the positive, asymmetric, $K$-split, single-lobe states on the $(u,v)$-plane
		for $a=0.895$ and $N = 3$ in the case of $K = 1$ (left) and $K = 2$ (right).}
	\label{fig-3}
\end{figure}

The conclusion of Lemma \ref{up-down-state} is illustrated on Fig. \ref{periods-section4}.
The left panel shows plots of $T_+(p_0,q_0)$ and $T_-(p_0,q_0)$ in $q_0$
for a fixed value of $p_0 \in (p_*,1)$. The dependencies are monotonic in agreement with
Lemmas \ref{monotone-on-left} and \ref{monotone-T-minus}. The right panel shows the function
$F$ in $q_1$ defined by (\ref{Fq1})  for $K = 1$ and $N = 3$. The function is monotonic and has a unique root
in the interval $\mathcal{I}(p_0;K)$. A similar picture holds for $K = 2$ and $N = 3$.

Figure \ref{fig-3} show how the asymmetric, $K$-split, single-lobe states are constructed for the same
value of $p_0$ and $N = 3$. The left panel shows the state with $K = 1$ and the right panel shows the state with $K = 2$
by using orbits on the $(u,v)$-plane.

\subsection{The mapping $(0,p_*) \ni p_0 \mapsto \epsilon \in (0,\infty)$}

Fix $K=1, 2, \dots, N-1$. By Lemma \ref{up-down-state}, for every $p_0 \in (0, p_*)$, there is a unique
vector $(q_1, q_2, \dots, q_N)$ satisfying (\ref{q-cond}) and (\ref{split-system}), and this defines uniquely
the following mappings:
\begin{equation}
\label{unique-q1-qN}
(0,p_*) \ni p_0 \mapsto q_1(p_0; K) \in (0, \infty) \quad
\textrm{and} \quad
(0,p_*) \ni p_0 \mapsto q_N(p_0; K) \in (0, \sqrt{A(p_0)}),
\end{equation}
where $q_1(p_0; K) \in \mathcal{I}(p_0;K)$ is uniquely defined as the root of $F$ given by (\ref{Fq1})
and $q_N(p_0;K) \in (0,\sqrt{A(p_0)})$ is uniquely defined by (\ref{q-N-unique}).
By using the first equation in (\ref{split-system}) we also define a unique mapping
\begin{equation}
\label{unique-T-plus}
(0,p_*) \ni p_0 \mapsto T_+(p_0, q_1(p_0; K)) \in (0, \infty).
\end{equation}
The following lemmas describe the dependence of
$T_+(p_0, q_1(p_0; K))$ on $p_0$ which gives
monotonicity of the mapping $(0,p_*) \ni p_0 \mapsto \epsilon = \frac{1}{\pi}  T_+(p_0, q_1(p_0; K)) \in (0,\infty)$.

\begin{lemma}
\label{unique-f-c1}
For every $K=1, 2, \dots, N-1$, the mappings (\ref{unique-q1-qN}) and (\ref{unique-T-plus}) are $C^1$
for every $p_0 \in (0, p_*)$.
\end{lemma}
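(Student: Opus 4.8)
The plan is to obtain all three mappings from a single application of the Implicit Function Theorem to the defining system (\ref{split-system}). I would view that system as two equations $G_1 = G_2 = 0$ in the three variables $(p_0,q_1,q_N)$, where
$$
G_1(p_0,q_1,q_N) := T_+(p_0,q_1) - T_-(p_0,q_N), \qquad
G_2(p_0,q_1,q_N) := 2K q_1 - 2(N-K) q_N - \sqrt{A(p_0)}.
$$
By Lemma \ref{up-down-state}, for each $p_0 \in (0,p_*)$ this system has a unique solution $(q_1,q_N)$ with $q_1 \in \mathcal{I}(p_0;K)$ interior and $q_N \in (0,\sqrt{A(p_0)})$, so it suffices to show that this solution depends on $p_0$ in a $C^1$ fashion. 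The mapping (\ref{unique-T-plus}) will then follow automatically as a composition.

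First I would record the joint regularity of $T_+$ and $T_-$. On $(0,1)\times(0,\infty)$ the function $T_+$ is $C^1$ in $(p_0,q_0)$, and on the region $\{(p_0,q_0) : q_0 \in (0,\sqrt{A(p_0)})\}$ the function $T_-$ is $C^1$ in $(p_0,q_0)$. Both facts are already implicit in the singularity-free representations derived in the proofs of Lemmas \ref{global-decreasing}, \ref{monotone-on-left}, and \ref{monotone-T-minus}: after the integration by parts, the integrand $v=\sqrt{E_0(p_0,q_0)+A(u)}$, the turning points $p_+$ (resp. $p_-$), and the energy $E_0(p_0,q_0)=q_0^2-A(p_0)$ all depend smoothly on $(p_0,q_0)$, while the weakly singular factor $1/v$ at the turning point has been regularized into a bounded $v$ factor. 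Since $\sqrt{A(p_0)}=p_0\sqrt{1-p_0^2}$ is smooth on $(0,1)$, both $G_1$ and $G_2$ are $C^1$ in all three variables, and at each fixed $p_0 \in (0,p_*)$ the solution $q_N$ lies strictly below $\sqrt{A(p_0)}$, so $T_-$ is $C^1$ in a full neighborhood of the solution.

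The heart of the argument is the invertibility of the Jacobian of $(G_1,G_2)$ with respect to $(q_1,q_N)$. Differentiating gives
$$
\det \frac{\partial(G_1,G_2)}{\partial(q_1,q_N)}
= -2(N-K)\,\frac{\partial T_+}{\partial q_0}(p_0,q_1) + 2K\,\frac{\partial T_-}{\partial q_0}(p_0,q_N).
$$
For $p_0 \in (0,p_*)$, Lemma \ref{monotone-on-left} gives $\frac{\partial T_+}{\partial q_0}<0$ for all $q_0 \in (0,\infty)$, while Lemma \ref{monotone-T-minus} gives $\frac{\partial T_-}{\partial q_0}>0$ for $q_0 \in (0,\sqrt{A(p_0)})$; since $1 \leq K \leq N-1$, both terms on the right are strictly positive, so the determinant is strictly positive and hence nonzero. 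This sign computation is precisely where the monotonicity results of the previous subsections do all the work, and it is the only place where the restriction $p_0 \in (0,p_*)$ is essential.

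With these two ingredients the Implicit Function Theorem yields, at each $p_0 \in (0,p_*)$, a local $C^1$ solution $p_0 \mapsto (q_1(p_0),q_N(p_0))$ of the system. By the uniqueness established in Lemma \ref{up-down-state}, this local branch coincides with the globally defined mappings in (\ref{unique-q1-qN}), which are therefore $C^1$ on all of $(0,p_*)$. Finally, (\ref{unique-T-plus}) is the composition $p_0 \mapsto T_+(p_0,q_1(p_0;K))$ of the $C^1$ map $p_0 \mapsto (p_0,q_1(p_0;K))$ with the jointly $C^1$ function $T_+$, hence $C^1$ as well. I expect the only genuinely delicate point to be the joint $C^1$ regularity of $T_\pm$ near the turning points; once that is granted via the regularized integral representations already in hand, the remainder is a direct application of the Implicit Function Theorem together with the sign information from Lemmas \ref{monotone-on-left} and \ref{monotone-T-minus}.
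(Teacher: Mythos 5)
Your proposal is correct and follows essentially the same route as the paper: both apply the Implicit Function Theorem to the map $G(p_0,q_1,q_N)$ built from the system (\ref{split-system}), using the singularity-free integral representations for the joint $C^1$ regularity of $T_\pm$ and the monotonicity of Lemmas \ref{monotone-on-left} and \ref{monotone-T-minus} to get a strictly positive Jacobian determinant in $(q_1,q_N)$. Your explicit sign computation of the determinant and the remark that uniqueness from Lemma \ref{up-down-state} glues the local branches into the global mappings are just slightly more detailed versions of what the paper states.
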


\begin{proof}
Recall that the period functions
$T_+(p_0, q_0)$ and $T_-(p_0, q_0)$ are $C^1$ in both $p_0$ and $q_0$
 thanks to the representation (\ref{integration}), see the proofs
 of Lemmas \ref{global-decreasing}, \ref{monotone-on-left}, and \ref{monotone-T-minus}.

Consider the function
$G(p_0, q_1, q_N): (0, p_*) \times (0, \infty) \times (0, \sqrt{A(p_0)}) \to \mathbb{R}^2$ given by
\begin{equation}
G(p_0, q_1, q_N) = \begin{pmatrix}
T_+(p_0, q_1) - T_-(p_0, q_N) \\
2 Kq_1 - 2 (N-K)q_N - \sqrt{A(p_0)}
\end{pmatrix}.
\end{equation}
Note that the system (\ref{split-system}) is equivalent to
$G(p_0, q_1, q_N) = 0$. The $C^1$ dependence of $q_1(p_0; K)$ and $q_N(p_0; K)$
with respect to $p_0$ is a direct consequence of the Implicit Function Theorem applied
to the function $G$. Indeed, $G$ is a $C^1$ function in all its variables, and
the Jacobian matrix $D_{(q_1, q_N)} G(p_0,q_1,q_N)$ is invertible since
the determinant of
$$
D_{(q_1, q_N)} G(p_0,q_1,q_N) = \begin{pmatrix}
\frac{\partial}{\partial q_1} T_+(p_0, q_1) & -\frac{\partial}{\partial q_N} T_-(p_0, q_N) \\
2K & -2(N-K)
\end{pmatrix}
$$
is strictly positive due to monotonicity results in Lemmas \ref{monotone-on-left} and \ref{monotone-T-minus}.

The differentiability of the function
$T_+(p_0, q_1(p_0; K))$ in $p_0$ comes from differentiability of $T_+(p_0, q_0)$ and $q_1(p_0; K)$
in its variables.
\end{proof}

\begin{lemma}
\label{unique-T-div}
There exists $p_{\infty} \in (0, p_*)$ such that the mapping
$p_0 \mapsto T_+(p_0, q_1(p_0; K))$ defined in
(\ref{unique-T-plus}) is monotonically decreasing for every
$p_0 \in (0, p_{\infty})$ and every $K=1, 2, \dots, N-1$.
\end{lemma}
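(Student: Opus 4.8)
The plan is to differentiate the composite period $\mathcal{T}^{(K)}(p_0) := T_+(p_0, q_1(p_0;K))$ and to reduce the sign of its derivative to the signs of the two \emph{partial} derivatives $\partial_{p_0} T_+$ and $\partial_{p_0} T_-$. First I would invoke Lemma \ref{unique-f-c1}, which makes $q_1(p_0;K)$ and $q_N(p_0;K)$ of class $C^1$, so that $\mathcal{T}^{(K)}$ is differentiable, and I would record the three relations obtained by differentiating the system (\ref{split-system}) in $p_0$. The chain rule applied to $T_+(p_0,q_1)=T_-(p_0,q_N)$ gives the pair of identities
\begin{equation*}
\frac{d\mathcal{T}^{(K)}}{dp_0} = \partial_{p_0}T_+ + \partial_{q_0}T_+ \cdot q_1' = \partial_{p_0}T_- + \partial_{q_0}T_- \cdot q_N',
\end{equation*}
while the algebraic constraint yields $2K q_1' - 2(N-K) q_N' = \frac{A'(p_0)}{2\sqrt{A(p_0)}}$. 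Writing $a := \partial_{q_0}T_+$, $b := \partial_{q_0}T_-$, $P := \partial_{p_0}T_+$, $Q := \partial_{p_0}T_-$, and $S := \frac{A'(p_0)}{2\sqrt{A(p_0)}}$, I would eliminate $q_1'$ and $q_N'$ to obtain the closed expression
\begin{equation*}
\frac{d\mathcal{T}^{(K)}}{dp_0} = \frac{a b S + 2K b P - 2(N-K) a Q}{2K b - 2(N-K) a}.
\end{equation*}

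By Lemma \ref{monotone-on-left} we have $a = \partial_{q_0}T_+ < 0$ on $(0,p_*]$ and by Lemma \ref{monotone-T-minus} we have $b = \partial_{q_0}T_- > 0$ on $(0,p_*)$, so the denominator $2Kb - 2(N-K)a$ is strictly positive; moreover $S>0$ for $p_0\in(0,p_*)$. Inspecting the numerator, the term $abS$ is negative, the term $2KbP$ is negative as soon as $P<0$, and the term $-2(N-K)aQ$ is negative as soon as $Q<0$. Hence the entire problem collapses to the two sign facts $\partial_{p_0}T_+ < 0$ and $\partial_{p_0}T_- < 0$, evaluated at $q_0 = q_1(p_0;K)$ and $q_0 = q_N(p_0;K)$ respectively: once these hold, the numerator is negative and $\frac{d\mathcal{T}^{(K)}}{dp_0} < 0$ follows for all $K$ simultaneously.

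To prove $P<0$ and $Q<0$ I would carry out an asymptotic analysis as $p_0 \to 0$, since I expect these inequalities to hold only near the origin, which is exactly why the statement restricts to $(0,p_\infty)$ rather than all of $(0,p_*)$. In this limit the orbits collapse toward the origin, $q_1(p_0;K)$ and $q_N(p_0;K)$ tend to $0$ at rate $O(p_0)$ with bounded ratios to $\sqrt{A(p_0)}$, and using $A(u)=u^2-u^4\approx u^2$ one finds at fixed $q_0$ the leading behaviors $T_+(p_0,q_0)\approx \ln 2 - \ln(p_0+q_0)$ and $T_-(p_0,q_0)\approx \tfrac12\ln\frac{p_0+q_0}{p_0-q_0}$, whose $p_0$-derivatives are $\approx -\frac{1}{p_0+q_0}$ and $\approx \tfrac12\big(\frac{1}{p_0+q_0}-\frac{1}{p_0-q_0}\big)$, both strictly negative and of order $-1/p_0$. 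The rigorous version would differentiate with respect to $p_0$ the singularity-free representations of $T_\pm$ already used in Lemmas \ref{global-decreasing}, \ref{monotone-on-left}, and \ref{monotone-T-minus}, and then bound the resulting integrals so as to show that the negative logarithmic divergence dominates the bounded corrections; I would then take $p_\infty\in(0,p_*)$ to be a common threshold valid for the finitely many $K=1,\dots,N-1$. The main obstacle is precisely this uniform asymptotic control of $\partial_{p_0}T_\pm$ in the degenerate limit $p_0\to 0$, where the turning points coalesce ($p_+\to 1$, $p_-\to 0$) and the period integrals become improper, so one must verify that the dominant $-1/p_0$ contribution is not overwhelmed by the $p_0$-dependence coming through the moving endpoints $q_1(p_0;K)$ and $q_N(p_0;K)$.
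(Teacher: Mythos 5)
Your reduction is correct as far as it goes: the implicit differentiation of the system (\ref{split-system}) is carried out correctly, the closed formula for $\frac{d}{dp_0}T_+(p_0,q_1(p_0;K))$ checks out, and the signs $\partial_{q_0}T_+<0$ (Lemma \ref{monotone-on-left}), $\partial_{q_0}T_->0$ (Lemma \ref{monotone-T-minus}) and $A'(p_0)>0$ on $(0,p_*)$ do make the denominator positive and collapse everything onto the two inequalities $\partial_{p_0}T_+<0$ and $\partial_{p_0}T_-<0$ along the solution curves. But those two inequalities are exactly where the proposal stops being a proof. The leading-order computation with $A(u)\approx u^2$ is a heuristic: the period integrals are improper at the turning points, the turning points move with $p_0$ through the energy level $E_0=q_0^2-A(p_0)$, the integrand at the fixed endpoint $u=p_0$ blows up like $1/q_0\sim 1/p_0$, and the $p_0$-dependence also enters through $q_1(p_0;K)$ and $q_N(p_0;K)$. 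You acknowledge this yourself ("the main obstacle is precisely this uniform asymptotic control"), so what you have is an outline of a strategy; making it rigorous would require redoing the singularity-removal identities of Lemmas \ref{global-decreasing}, \ref{monotone-on-left} and \ref{monotone-T-minus} for the $p_0$-partial derivatives and then extracting the $-1/p_0$ divergence uniformly, none of which is done.

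The paper avoids all of this. Since $q_1(p_0;K)\in\mathcal{I}(p_0;K)$ one has $q_1(p_0;K)\le \tfrac{2(N-K)+1}{2K}\sqrt{A(p_0)}$, so by the monotonicity of $T_+$ in $q_0$ (Lemma \ref{monotone-on-left}) it follows that $T_+(p_0,q_1(p_0;K))\ge T_+\bigl(p_0,\tfrac{2(N-K)+1}{2K}\sqrt{A(p_0)}\bigr)$, and the right-hand side diverges as $p_0\to 0$ by Remark \ref{T-plus-C-divergence}. This divergence, combined with the $C^1$ smoothness of the map from Lemma \ref{unique-f-c1}, is all the paper uses to conclude monotone decrease on some interval $(0,p_\infty)$ common to the finitely many values of $K$. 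If you want to complete your argument, the divergence route is the one to take: it requires no control of $\partial_{p_0}T_\pm$ at all, only the bound on $q_1(p_0;K)$ and the already-established monotonicity in $q_0$.
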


\begin{proof}
We shall prove that for every $K=1, 2, \dots, N-1$, it follows that $T_+(p_0, q_1(p_0; K)) \to \infty$ as $p_0 \to 0$.
Since this function is $C^1$ for every $p \in (0,p_*)$ by Lemma \ref{unique-f-c1},
the mapping $p_0 \mapsto T_+(p_0, q_1(p_0; K))$ is monotonically decreasing for small positive $p_0$
and the assertion of the lemma follows.

Set $C_{N,K} := \frac{2(N-K)+1}{2K}$ for simplicity.
Since $q_1(p_0;K) \in \mathcal{I}(p_0;K)$, it is true that
$q_1(p_0; K) \leq C_{N,K} \sqrt{A(p_0)}$. Using the monotonicity of the period function
in Lemma \ref{monotone-on-left}, we obtain
$$
T_+(p_0, q_1(p_0; K)) \geq T_+(p_0, C_{N,K}\sqrt{A(p_0)}),
$$
where the lower bound diverges by Remark \ref{T-plus-C-divergence}:
$$
T_+(p_0, C_{N,K} \sqrt{A(p_0)}) \to \infty
\quad \mbox{\rm as} \quad p_0 \to 0.
$$
Hence, $T_+(p_0, q_1(p_0; K)) \to \infty$ as $p_0 \to 0$.
\end{proof}

\subsection{Proof of Theorem \ref{global-bifurcations}.}

By Lemma \ref{up-down-state}, for every $p_0 \in (0, p_*)$,
there are exactly $N$ positive single-lobe states
$\Phi^{(K)}$ with $1\leq K \leq N$ satisfying the system of differential equations
(\ref{NLS-scaled})--(\ref{bvp-scaled}) with $u_0(0) = p_0$ completed with
the symmetry and monotonicity conditions of Theorem \ref{global-bifurcations}.

For every $K=1, 2, \dots, N-1$, by using the fact that
$T_+(p_0, q_1(p_0;K)) = \pi \epsilon$, we obtain the mapping
$(0, p_*) \ni p_0 \mapsto \epsilon(p_0;K) = \frac{1}{\pi} T_+(p_0, q_1(p_0;K)) \in (0, \infty)$. By smoothness result in Lemma \ref{unique-f-c1}
monotonicity result in Lemma \ref{unique-T-div}, we get the bijection
$$
(0, p_{\infty}) \ni p_0 \mapsto \epsilon(p_0;K) \in (\epsilon_{\infty}(K), \infty),
$$
where $p_{\infty} \in (0,p_*)$ is defined in Lemma \ref{unique-T-div} independently of $K$.
Defining $\epsilon_{\infty} := \max_{1\leq K \leq N-1} \epsilon_{\infty}(K)$,
we get all asymmetric, positive, single-lobe, $K$-split states exist for
$\omega \in (-\infty,\omega_{\infty})$, where $\omega_{\infty} = -\epsilon_{\infty}^2$.
For $K = N$, the existence of symmetric, positive, single-lobe state $\Phi \equiv \Phi^{(N)}$
follows by Theorem \ref{global-existence}.

Moreover, for every $K=1, 2, \dots, N$, the mapping
$(-\infty, \omega_{\infty}) \ni \omega \mapsto \Phi^{(K)}(\cdot,\omega) \in H^2_{\rm NK}(\Gamma_N)$
is $C^1$ by Lemma \ref{unique-f-c1}. By construction,
the mass $\mu^{(K)}(\omega) : = Q(\Phi^{(K)}(\cdot,\omega))$ is equal to
\begin{eqnarray*}
\mu^{(K)}(\omega) = K \int_{-\pi}^{\pi} \phi_1^2 dx +
(N-K) \int_{-\pi}^{\pi} \phi_N^2 dx +
 \int_0^{\infty} \phi_0^2 dx,
\end{eqnarray*}
which yields
\begin{eqnarray*}
\mu^{(K)}(\omega) & = & 2K \epsilon(p_0; K) \int_{p_0}^{p_+} \frac{u^2 du}{\sqrt{A(u)-A(p_+)}}
+ 2(N-K) \epsilon(p_0; K) \int_{p_-}^{p_0} \frac{u^2 du}{\sqrt{A(u)-A(p_-)}} \\
& \phantom{t} & + \epsilon(p_0; K) \left[ 1 - \tanh(a) \right],
\end{eqnarray*}
where the first integral is defined along the level curve with
$E(u,v) = E(p_0, q_1(p_0))$ and the second integral
is defined along the level curve with $E(u,v) = E(p_0, q_N(p_0))$.

As $p_0 \to 0$, we have $a \to \infty$ and $\epsilon(p_0; K) \to \infty$, and so $\mu^{(K)}(\omega) \to \infty$ as $\omega \to \infty$
with the following precise limit:
$$
\lim_{\epsilon \to \infty} \frac{\mu}{\epsilon} = 2K \int_0^1 \frac{u du}{\sqrt{1-u^2}} = 2K.
$$
This asymptotic result justifies the ordering of $\mu^{(K)}(\omega)$ given by (\ref{ordering-mass}) by
redefining $\omega_{\infty}$ if needed.

\section{Numerical approximation of positive single-lobe states}
\label{sec-numerics}

The analytical results on asymmetric, $K$-split, single-lobe states in Section \ref{sec-global-bifurcations}
were restricted to the region $p_0 \in (0,p_*)$, for which monotonicity results of Lemmas \ref{monotone-on-left} and \ref{monotone-T-minus}
were sufficient to guarantee that the $K$-split states satisfy (\ref{q-cond}) and are found from the system
(\ref{split-system}). In other words, the $K$ components are of the type (\ref{asym-sol-plus}) and $(N-K)$ components
are of the type (\ref{asym-sol-minus}).

Here we explore numerically the asymmetric, $K$-split, single-lobe states for the case $p_0 \in (p_*,1)$
in particular, near the bifurcation point $p_{\rm bif} \in (p_*,1)$ found in Section \ref{sec-global-stability}. Figure \ref{periods-T-plus-minus}
suggests that the graphs of $T_+(p_0,q_0)$ and $T_-(p_0,q_0)$ in $q_0$ do not intersect for $p_0 \in (p_*,1)$.
Therefore, the $K$-split single-lobe states may only be combinations of $K$ components of the type (\ref{asym-sol-plus})
and different $(N-K)$ components of the same type (\ref{asym-sol-plus}). Note that if all components
are of the same type (\ref{asym-sol-minus}), the boundary condition (\ref{NKcondition}) is not satisfied
since the left-hand side is negative and the right-hand side is positive.

\begin{figure}[htbp] 
   \centering
   \includegraphics[width=3in, height = 2in]{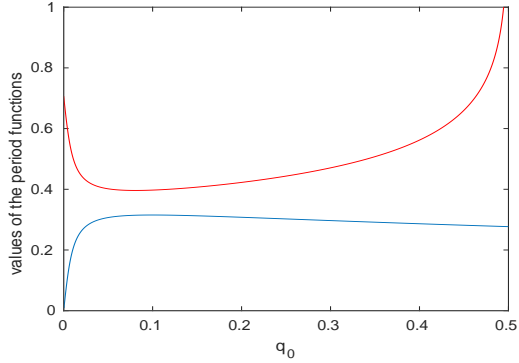}
   \caption{The blue and red lines show respectively the dependence of
   $T_+(p_0, q_0)$ and $T_-(p_0, q_0)$ in $q_0$ for $p_0 = 0.7078$.}
   \label{periods-T-plus-minus}
\end{figure}

Hence, we are looking for the asymmetric, $K$-split, single-lobe states from the roots of the following system:
\begin{equation}
\label{split-system-2}
\left\{ \begin{array}{l}
T_+(p_0, q_1) = T_+(p_0, q_N), \\
2 K q_1 + 2 (N-K)q_N = \sqrt{A(p_0)}
\end{array} \right.
\end{equation}
where $q_1 \neq q_N$ and $q_1,q_N \geq 0$.
Using Lemma \ref{behaviour-qmax}, for every $p_0 \in (p_*, p_{**})$, the period function $T_+(p_0, q_0)$
has the unique critical point $q_0 = \qmax (p_0)$, which corresponds to its maximum. Therefore, assuming
$q_1 > q_N$, the first equation in system (\ref{split-system-2}) yields the one-to-one function
\begin{equation}
\label{mapping-q1-qN}
(0, \qmax(p_0)) \ni q_N \mapsto q_1(q_N) \in (\qmax(p_0), \infty),
\end{equation}
for any $p_0 \in (p_*,p_{**})$. It remains to compute numerically the value of
$q_N \in (0, \qmax(p_0))$ for which the second equation in system
(\ref{split-system-2}) with $q_1(q_N)$ given by the mapping (\ref{mapping-q1-qN}) is satisfied.
Therefore, for $p_0 \in (p_*,p_{**})$, we construct the function $F:(0, \qmax(p_0)) \to \mathbb{R}$ defined as
\begin{equation}
\label{F-qN}
F(q_N) := K q_1(q_N) + (N-K) q_N
\end{equation}
for every $q_N \in (0, \qmax(p_0))$. The second equation in system (\ref{split-system-2}) is equivalent to
the equation $F(q_N) = \frac{1}{2}\sqrt{A(p_0)}$.

Figures \ref{fig-up-up-left1} and \ref{fig-up-up-left2} show the graph of the function $F$ defined by (\ref{F-qN})
in $q_N$ (left) and the asymmetric, $K$-split, single-lobe state constructed from the level curves on the $(u,v)$-plane (right)
for $p_0\in (p_*, p_\bif)$, $N = 3$ with $K = 1$ and $K = 2$ respectively. There exist exactly one value of
$q_N \in (0, \qmax(p_0))$ such that $F(q_N) = \frac{1}{2} \sqrt{A(p_0)}$ for both cases, which give
only one state $\Phi^{(1)}$ and $\Phi^{(2)}$ for this $p_0$.

\begin{figure}[htbp] 
   \centering
   \includegraphics[width=3in, height = 2in]{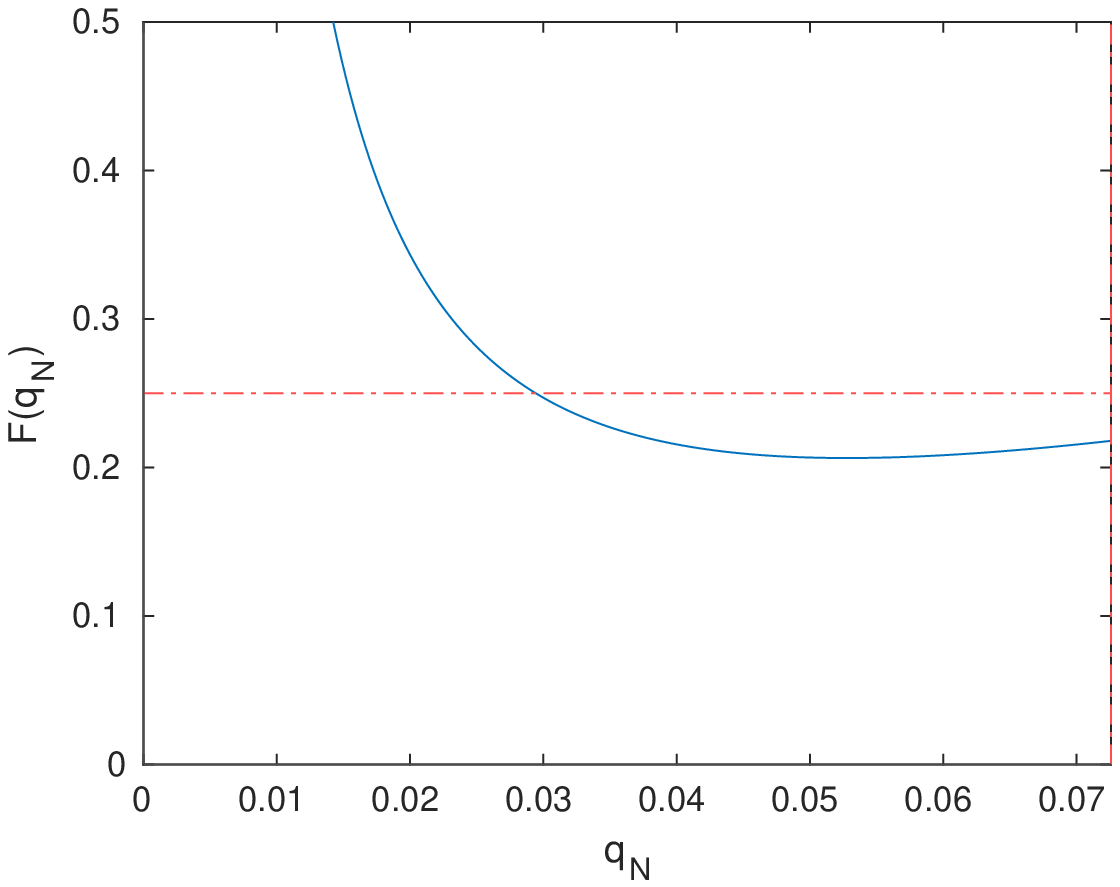}
    \includegraphics[width=3in, height = 2in]{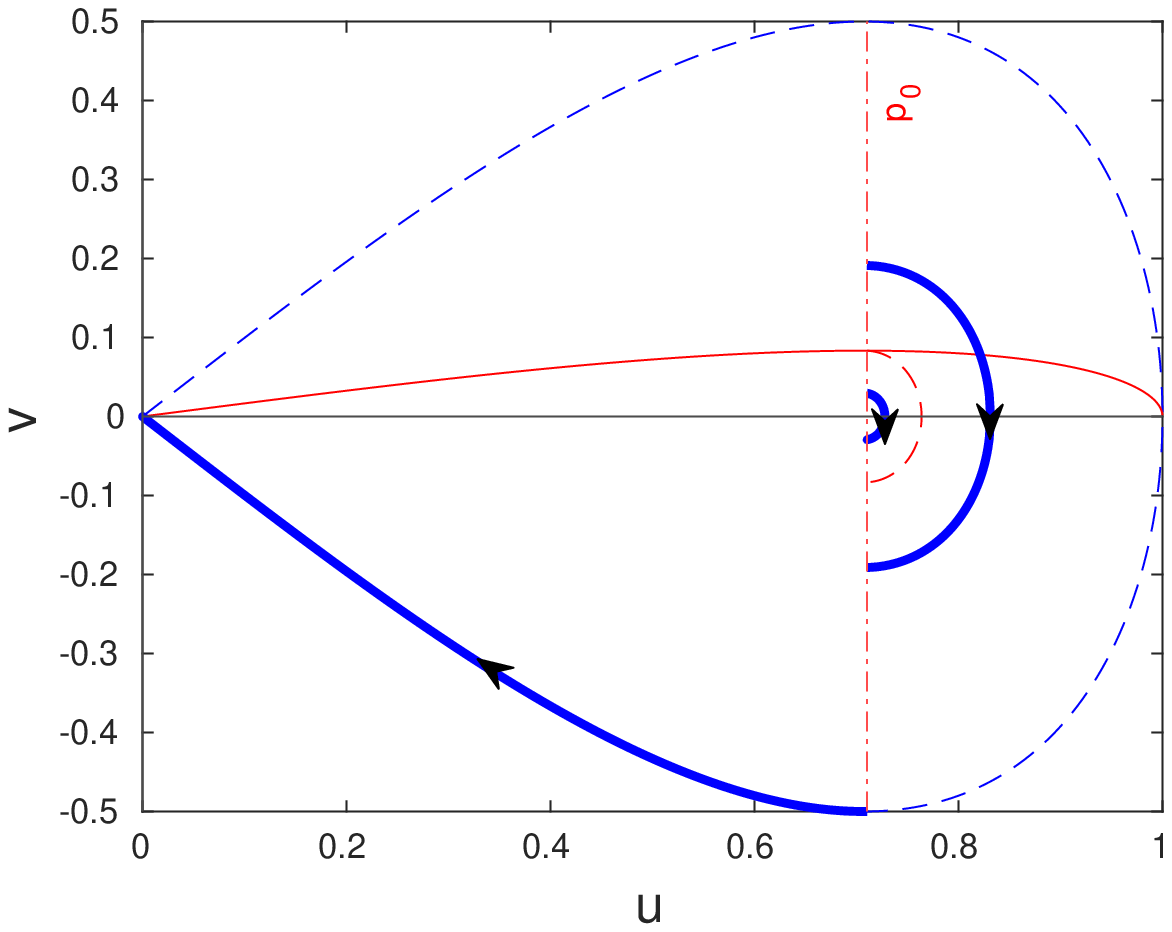}
   \caption{The graph of the function $F(q_N)$ (left)
   and the construction of the positive, asymmetric, $K$-split, single-lobe state on the $(u,v)$-plane (right)
   for $a=0.875$ ($p_0 = 0.7103 \in (p_*, p_\bif)$), $N = 3$, and $K = 1$. The red dashed horizontal line
   on the left panel corresponds to the value of $\frac{1}{2}\sqrt{A(p_0)}$.}
   \label{fig-up-up-left1}
\end{figure}

\begin{figure}[htbp] 
   \centering
   \includegraphics[width=3in, height = 2in]{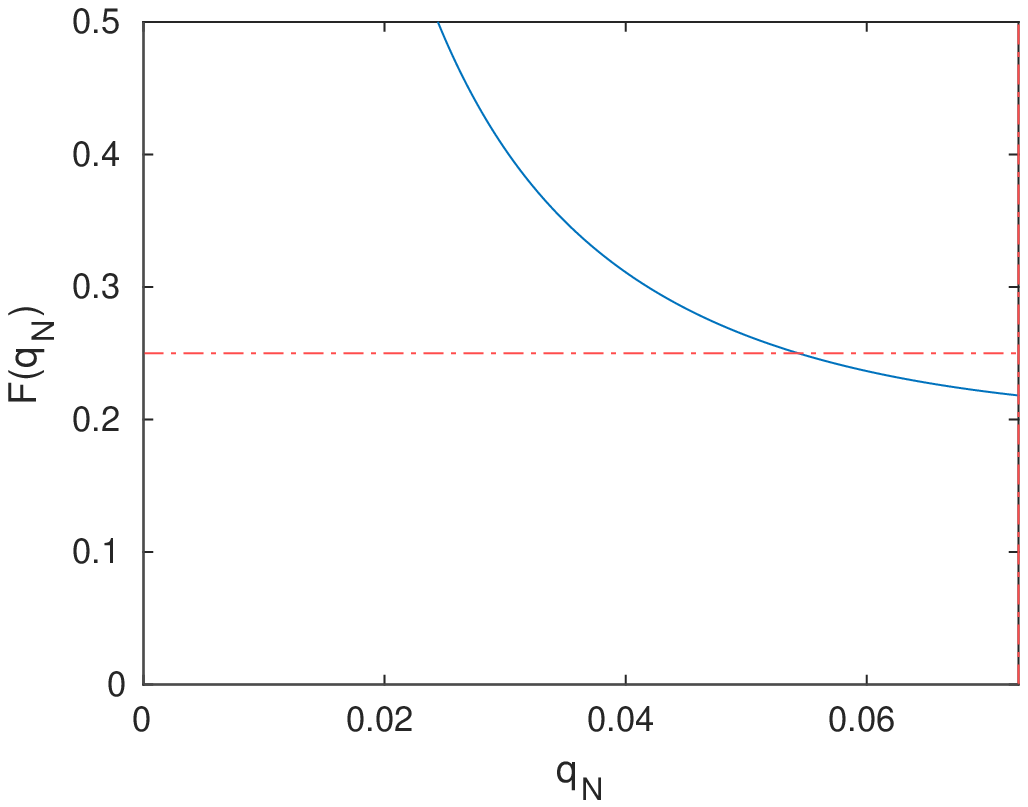}
    \includegraphics[width=3in, height = 2in]{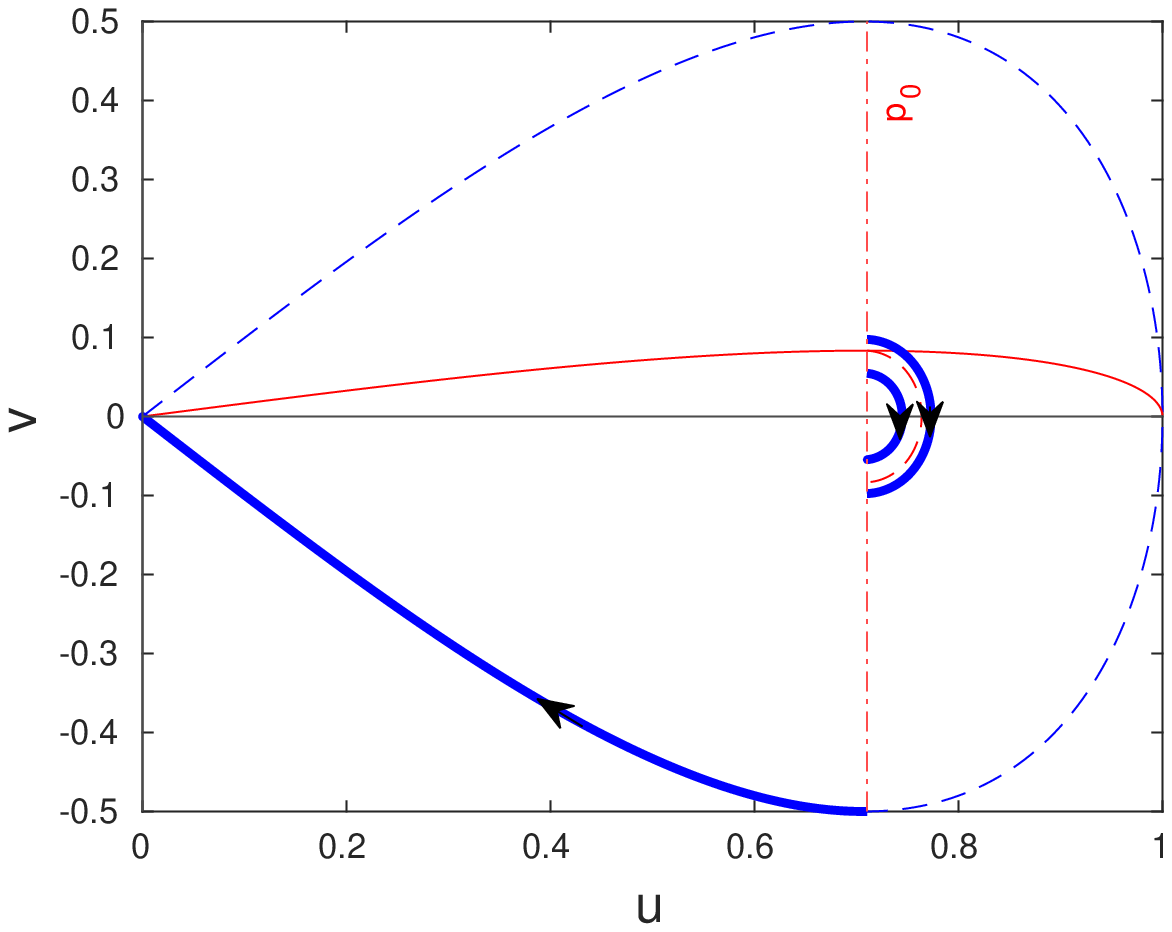}
   \caption{The same as in Figure \ref{fig-up-up-left1} but for $a=0.875$, $N = 3$, and $K = 2$.}
   \label{fig-up-up-left2}
\end{figure}

Figure \ref{fig-bif-right} shows the graph of the function $F$ in $q_N$
for $p_0\in (p_\bif,p_{**})$, $N = 3$, with $K = 1$ (left) and $K = 2$ (right).
For $K = 1$, there exist two values of
$q_N \in (0, \qmax(p_0))$ such that $F(q_N) = \frac{1}{2} \sqrt{A(p_0)}$, which give
two states $\Phi^{(1)}$ for this $p_0$. The two states constructed from the level curves on the $(u,v)$-plane
are shown on Fig. \ref{fig-up-up-right}. The coexistence of two states $\Phi^{(1)}$ for $p_0 \gtrsim p_{\rm bif}$
explains the fold bifurcation seen for the red line on the insert of Fig. \ref{fig-branches} (right).
On the other hand, there are no values of $q_N \in (0, \qmax(p_0))$ such that $F(q_N) = \frac{1}{2} \sqrt{A(p_0)}$
for $K = 2$. As a result, the state $\Phi^{(2)}$ only exists for $p_0 \lesssim p_{\rm bif}$, as on the green line shown on the insert of
Fig. \ref{fig-branches} (right).

\begin{figure}[htbp] 
   \centering
    \includegraphics[width=3in, height = 2in]{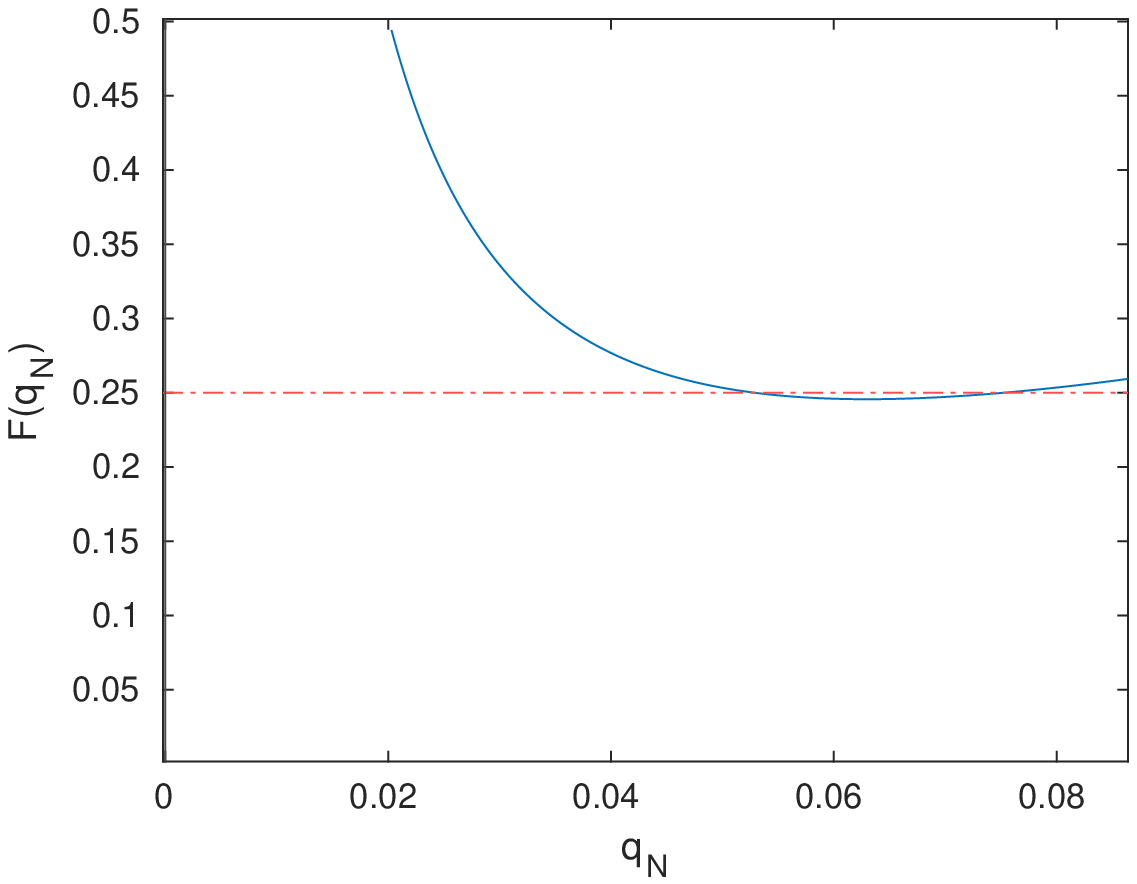}
   \includegraphics[width=3in, height = 2in]{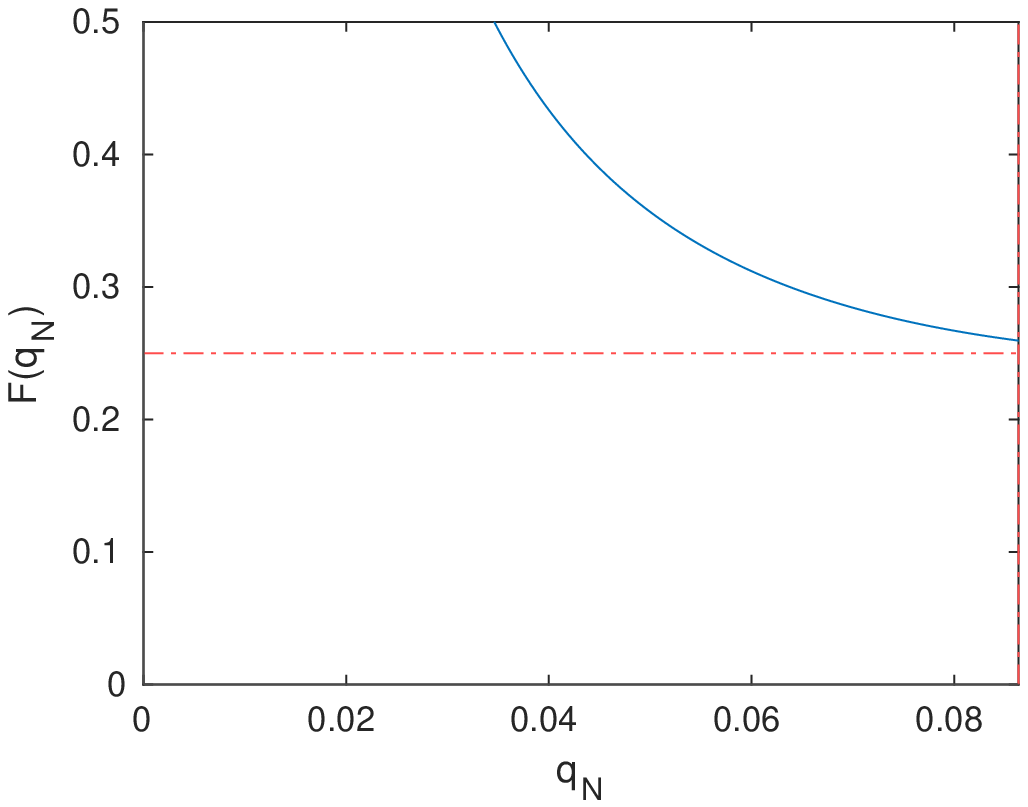}
   \caption{The graph of the function $F(q_N)$ for $a=0.8726$ ($p_0 = 0.7115 \in (p_\bif,p_{**})$), $N = 3$, with $K = 1$ (left)
   and $K = 2$ (right). The red dashed horizontal line on the left panel corresponds to the value of $\frac{1}{2}\sqrt{A(p_0)}$.}
   \label{fig-bif-right}
\end{figure}

\begin{figure}[htbp] 
   \centering
   \includegraphics[width=3in, height = 2in]{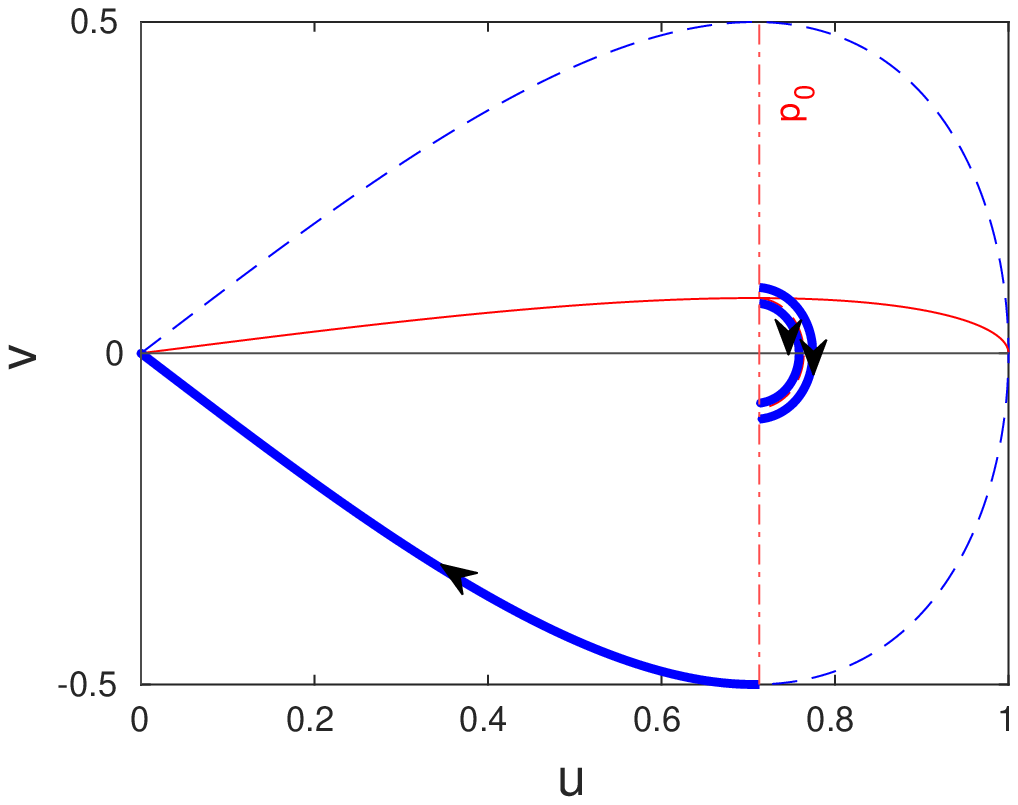}
    \includegraphics[width=3in, height = 2in]{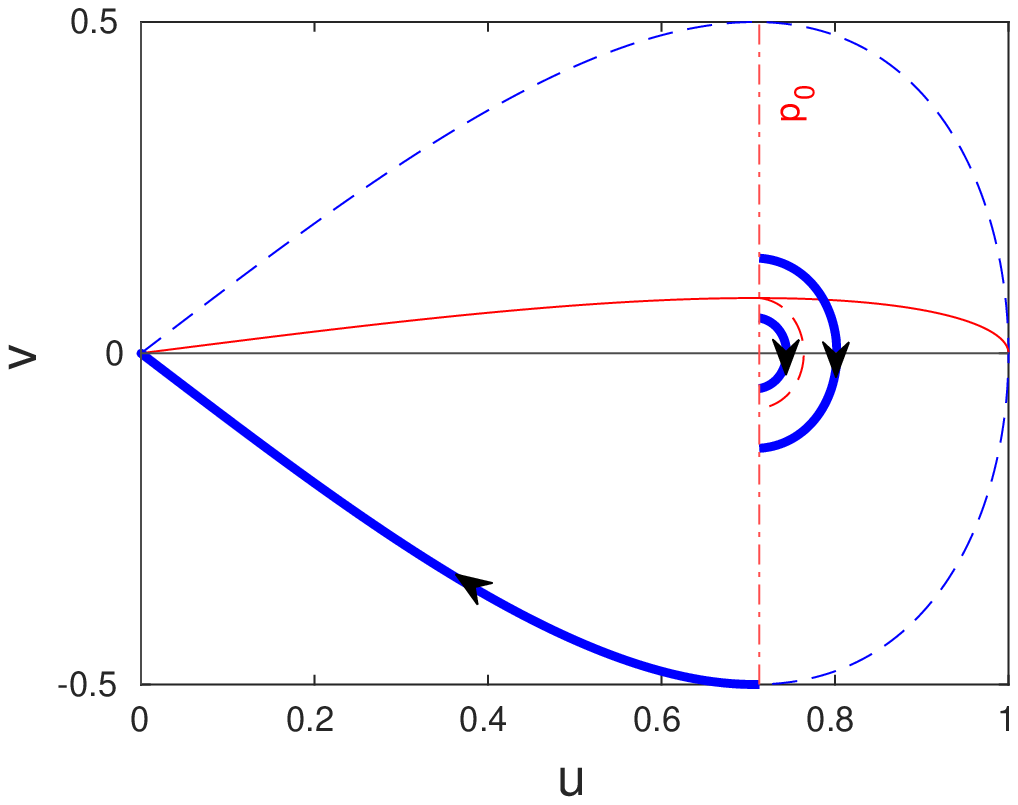}
   \caption{Construction of the positive, asymmetric, $K$-split, single-lobe state $\Phi^{(1)}$ on the $(u,v)$-plane
   for each of the two roots on Fig. \ref{fig-bif-right} (left) for $a=0.8726$, $N = 3$, and $K = 1$.}
   \label{fig-up-up-right}
\end{figure}


\appendix

\section{Spectrum of $-\Delta$ in $L^2(\Gamma_N)$}
\label{appendix-spectrum}

Here we show that the spectrum of $-\Delta$ in $L^2(\Gamma_N)$ consists of continuous spectrum on $[0,\infty)$
and a set of embedded eigenvalues $\{ n^2 \}_{n \in \mathbb{N}}$ of multiplicity $N$ and
$\{ \left(n - \frac{1}{2}\right)^2 \}_{n \in \mathbb{N}}$ of multiplicity $N-1$.

We first look for the discrete spectrum of eigenvalues $\lambda$, for which there exists $\Phi \in H^2_{\rm NK}(\Gamma_N)$
such that $-\Delta \Phi = \lambda \Phi$. The discrete spectrum consists of two sets, depending whether $\phi_0 \equiv 0$
or $\phi_0 \neq 0$. If $\phi_0(x) = 0$ for every $x \in [0,\infty)$, then the general solutions
$$
\phi_j(x) = c_j \cos(\sqrt{\lambda} x) + d_j \sin(\sqrt{\lambda} x), \quad x \in [-\pi,\pi], \quad j \in \{1,\dots,N\}.
$$
satisfy $\phi_j(\pm \pi) = 0$ from the continuity boundary conditions in (\ref{KBC}). This yields
$$
\left\{ \begin{array}{l} c_j \cos(\pi \sqrt{\lambda}) = 0, \\
d_j \sin(\pi \sqrt{\lambda}) = 0, \end{array} \right.  \quad j \in \{1,\dots,N\}.
$$
From the derivative boundary condition in (\ref{KBC}), we have $\sum_{j=1}^N \left[ \phi_j'(\pi) - \phi_j'(-\pi) \right] = 0$
which yields
$$
\sqrt{\lambda} \sum_{j=1}^N c_j \sin(\pi \sqrt{\lambda}) = 0.
$$
If $c_j = 0$ for every $j$, then the eigenvalues correspond to the roots of $\sin(\pi \sqrt{\lambda})$,
which are located at $\{ n^2 \}_{n \in \mathbb{N}}$. Each eigenvalue has multiplicity $N$ since
coefficients $(d_1,\dots,d_N)$ are independent of each other.

If $d_j = 0$ for every $j$, then the eigenvalues correspond to the roots of $\cos(\pi \sqrt{\lambda})$,
which are located at $\{ \left(n - \frac{1}{2}\right)^2 \}_{n \in \mathbb{N}}$.
In addition, coefficients $(c_1,\dots,c_N)$ satisfy the constraint $\sum_{j=1}^N c_j = 0$ which follows
from the derivative boundary condition. Therefore, each eigenvalue has multiplicity $N-1$.

The second part of the discrete spectrum, if it is non-empty, correspond to $\phi_0 \neq 0$.
Since the half-line tail is semi-infinite, we have $\phi_0 \in H^2(0,\infty)$
if and only if $\lambda < 0$, for which we obtain
$$
\phi_0(x) = c_0 e^{-\sqrt{|\lambda|} x}, \quad x \in [0,\infty),
$$
with some $c_0$ and
$$
\phi_j(x) = c_j \cosh(\sqrt{|\lambda|} x) + d_j \sinh(\sqrt{|\lambda|} x), \quad x \in [-\pi,\pi], \quad j \in \{1,\dots,N\}.
$$
From the continuity boundary conditions in (\ref{KBC}), we have $\phi_j(\pm \pi) = c_0$ which yield
$$
\left\{ \begin{array}{l} c_j \cosh(\pi \sqrt{|\lambda|}) = c_0, \\
d_j \sinh(\pi \sqrt{|\lambda|}) = 0, \end{array} \right.  \quad j \in \{1,\dots,N\}.
$$
Hence, $d_j = 0$ for every $j$ and $c_j$ are uniquely expressed for every $j$ by $c_0$ and $\lambda < 0$.
From the derivative boundary condition in (\ref{KBC}), we have
$\sum_{j=1}^N \left[ \phi_j'(\pi) - \phi_j'(-\pi) \right] = -c_0 \sqrt{|\lambda|}$
which yields
$$
\sqrt{\lambda} c_0 \left( 2N \tanh(\pi \sqrt{|\lambda|}) + 1 \right) = 0.
$$
This equation yields $c_0 = 0$ since $\tanh(\pi \sqrt{|\lambda|}) > 0$. Hence, the second
part of the discrete spectrum is empty.

Finally, the continuous part of the spectrum of $-\Delta$ in $L^2(\Gamma_N)$
is due to the non-compact tail and it is equivalent to the spectrum of $-\Delta : H^2(0,\infty) \subset L^2(0,\infty) \to L^2(0,\infty)$
which is located at $[0,\infty)$. Hence, all eigenvalues of the discrete spectrum of $-\Delta$ in $L^2(\Gamma_N)$
are embedded into the continuous spectrum.

\section{The symmetric state $\Phi$ for small mass}
\label{appendix-small}

Here we show that there exists $\omega_0 < 0$ such that for every $\omega \in (\omega_0,0)$,
the positive single-lobe symmetric state $\Phi$ of Theorem \ref{global-existence}
is the ground state of the constrained minimization problem (\ref{min}) for small $\mu$.

Let us parameterize the negative values of $\omega$ by $\omega = -\epsilon^2$ with $\epsilon > 0$
and use the scaling transformation (\ref{scaling-transform}). By using the shifted NLS soliton
(\ref{soliton}) for $u_0$ and the symmetry condition (\ref{sym-state-scaled}) for $u_1 = \dots = u_N$,
we obtain the boundary-value problem:
\begin{equation}
\label{bvp-small-mass}
\left\{ \begin{array}{l} -u_1''(z) + u_1(z) - 2 u_1(z)^3 = 0, \quad z \in (-\pi \epsilon,\pi \epsilon),  \\
u_1(- \pi \epsilon) = u_1(\pi \epsilon) = p_0, \\
u_1'(- \pi \epsilon) = -u_1'(\pi \epsilon) = q_0,
\end{array} \right.
\end{equation}
where $p_0 = {\rm sech}(a)$ and $q_0 = \frac{1}{2N} {\rm sech}(a) \tanh(a)$
are defined by $a > 0$.

Since the support of $[-\pi \epsilon,\pi \epsilon]$ shrinks to zero as $\epsilon \to 0$,
the power series solution provides an asymptotic expansion in powers of $\epsilon$:
$$
u_1(z) = u_1(0) + \frac{1}{2} u_1(0) \left[ 1 - 2 |u_1(0)|^2 \right] z^2 + \mathcal{O}(z^4), \quad z \in [-\pi \epsilon,\pi \epsilon].
$$
The continuity and derivative boundary conditions imply that
\begin{equation*}
\left\{ \begin{array}{l}
p_0 = u_1(0) + \mathcal{O}(\epsilon^2), \\
p_0 \sqrt{1-p_0^2} = 2N \pi \epsilon u_1(0) \left[ 1 - 2 |u_1(0)|^2  \right] + \mathcal{O}(\epsilon^3),
\end{array} \right.
\end{equation*}
which admits a unique asymptotic solution with $u_1(0) = p_0 + \mathcal{O}(\epsilon^2)$ and
$p_0 = 1 - 2N^2 \pi^2 \epsilon^2 + \mathcal{O}(\epsilon^4)$ or equivalently,
$a = 2N \pi \epsilon + \mathcal{O}(\epsilon^3)$ as $\epsilon \to 0$.

We compute asymptotically the mass $\mu(\omega) = Q(\Phi(\cdot,\omega))$ as follows:
\begin{eqnarray*}
\mu(\omega) = 2N \epsilon \int_0^{\pi \epsilon} u_1^2(z) dz + \epsilon \left[ 1 - \tanh(a) \right] = \epsilon + \mathcal{O}(\epsilon^2) \quad \mbox{\rm as} \quad \epsilon \to 0.
\end{eqnarray*}
Similarly, we compute asymptotically the energy $\eta(\omega) := E(\Phi(\cdot,\omega))$ as follows:
\begin{eqnarray*}
\eta(\omega) & = & 2N \epsilon^3 \int_0^{\pi \epsilon} \left[ [u_1'(z)]^2  - u_1(z)^4 \right] dz
+ \epsilon^3 \left[ \frac{2}{3} \tanh(a) {\rm sech}^2(a) - \frac{1}{3} + \frac{1}{3} \tanh(a) \right] \\
& = & -\frac{1}{3} \epsilon^3 + \mathcal{O}(\epsilon^4)  \quad \mbox{\rm as} \quad \epsilon \to 0.
\end{eqnarray*}
Therefore, $\mathcal{E}_{\mu} = -\frac{1}{3} \mu^3 + \mathcal{O}(\mu^4)$, which implies that
$\mathcal{E}_{\mu}$ belongs to the interval (\ref{bounds-on-E}). By Theorem 2.2 of \cite{AdamiCV},
this implies that $\Phi$ is a ground state of the constrained minimization problem (\ref{min}) for small $\mu$.

\section{The asymmetric state $\Phi^{(K=1)}$ for large mass}
\label{appendix-large}

Here we show that there exists $\omega_{\infty} < 0$ such that for every $\omega \in (-\infty,\omega_{\infty})$,
the positive single-lobe asymmetric state $\Phi^{(K=1)}$ of Theorem \ref{global-bifurcations}
is not the ground state of the constrained minimization problem (\ref{min}) for large $\mu$ with $N \geq 2$.

In the limit $\omega \to -\infty$ (or $\epsilon \to \infty$ after rescaling),
the solution $\Phi^{(K=1)}$ of Theorem \ref{global-bifurcations}
consists of the truncated NLS soliton in one component, say in $u_1$,
and exponentially small solution in the other components $(u_2,\dots,u_N)$ and $u_0$.
The truncated NLS soliton is given exactly by either the {\em cnoidal} wave
\begin{equation}
\label{cnoidal}
u_1(z) = \frac{k}{\sqrt{2k^2-1}} {\rm cn}\left(\frac{z}{\sqrt{2k^2-1}};k\right), \quad z \in \mathbb{R},
\end{equation}
or the {\em dnoidal} wave
\begin{equation}
\label{dnoidal}
u_1(z) = \frac{1}{\sqrt{2-k^2}} {\rm dn}\left(\frac{z}{\sqrt{2-k^2}};k\right), \quad z \in \mathbb{R},
\end{equation}
where $k \in (0,1)$ is the elliptic modulus and $cn$, $dn$ are Jacobian elliptic functions.
The parameter $k$ is selected uniquely near $k = 1$, where $u_1(z) = {\rm sech}(z)$. In fact, the Jacobi real
transformation $k \mapsto k^{-1}$ maps the {\em cnoidal} wave (\ref{cnoidal}) with $k < 1$ to the
{\em dnoidal} wave (\ref{dnoidal}) with $k > 1$, therefore, it is sufficient to consider the single analytic
expression (\ref{dnoidal}) for $k$ near $1$.

The Dirichlet and Neumann data at the end points of $[-\pi \epsilon,\pi \epsilon]$ are given by
$$
p_0 = u_1(-\pi \epsilon) = \frac{1}{\sqrt{2-k^2}} {\rm dn}\left(\frac{\pi \epsilon}{\sqrt{2-k^2}};k\right),
$$
and
$$
q_0 = u_1'(-\pi \epsilon) = \frac{k^2}{2-k^2} {\rm sn}\left(\frac{\pi \mu}{\sqrt{2-k^2}};k\right)
{\rm cn}\left(\frac{\pi \mu}{\sqrt{2-k^2}};k\right).
$$
Applying the main result of \cite{BMP} on the looping edge to the flower graph $\Gamma_N$,
it follows that $k$ is found from the nonlinear equation $2 q_0 = (2N-1) p_0 + R_{\mu}(p_0,q_0)$,
where $R_{\mu}(p_0,q_0)$ denotes the remainder terms which are exponentially smaller than the linear terms in $p_0$ and $q_0$.
By Theorem 4.3 in \cite{BMP}, $k$ is found uniquely in the form
$$
k = 1 + 8 \frac{2N-3}{2N+1} e^{-2\pi \epsilon} + \mathcal{O}(e^{-4 \pi \epsilon}) \quad \mbox{\rm as} \quad \epsilon \to \infty,
$$
whereas the mass $\mu(\omega) = Q(\Phi(\cdot,\omega))$ and energy $\eta(\omega) := E(\Phi(\cdot,\omega))$
are given asymptotically by
\begin{eqnarray*}
\mu(\omega) = 2 \epsilon - 16 \pi \frac{2N-3}{2N+1} \epsilon^2 e^{-2 \pi \epsilon} + \mathcal{O}(\epsilon e^{-2\pi \epsilon})
\quad \mbox{\rm as} \quad \epsilon \to \infty.
\end{eqnarray*}
and
\begin{eqnarray*}
\eta(\omega) = -\frac{2}{3} \epsilon^3 + \mathcal{O}(\epsilon^4 e^{-2\pi \epsilon}) \quad \mbox{\rm as} \quad \epsilon \to \infty.
\end{eqnarray*}
By the Comparison Lemma (Lemma 5.2 in \cite{BMP}), $\Phi^{(K=1)}$ is not the ground state for $N \geq 2$ which follows from
$\mu(\omega) < 2 \epsilon$. On the other hand, $\Phi^{(K=1)} = \Phi$ is the ground state for $N = 1$, for which $\mu(\omega) > 2 \epsilon$,
the latter conclusion agrees with the result following from Corollary 3.4 and Fig. 4 of \cite{AdamiJFA}. In both cases $N \geq 2$ and $N = 1$, we have $\mathcal{E}_{\mu} \sim -\frac{1}{12} \mu^3$ as $\mu \to \infty$,
which implies that the branch of $\Phi^{(K=1)}$ on the $(\mu,\eta)$ plane approaches the upper bound
of the interval (\ref{bounds-on-E}) from outside for $N \geq 2$ and from inside for $N = 1$,
in agreement with Figures \ref{fig-N-1} and \ref{fig-N-2}.

\end{document}